\title{The Gysin exact sequence for $S^1$-equivariant\\ 
  symplectic homology} 
\author{Fr\'ed\'eric {\sc Bourgeois}, \ Alexandru \sc{Oancea} 
           \\ {\it \qquad} \\ 
         {\it \small Laboratoire de Math\'ematiques d'Orsay, UMR 8628,} \\
          {\it \small  Universit\'e Paris-Sud \& CNRS, Orsay, France} \\ 
	{\it \qquad } \\
         {\it \small Institut de Math\'ematiques de Jussieu-Paris Rive Gauche,} \\ 
         {\it \small UMR 7586, Universit\'e Pierre et Marie Curie \& CNRS , Paris, France}  
         }
\date{December 20, 2013}
\newtheorem{PARA}{}[section] 
\newtheorem{theorem}[PARA]{Theorem} 
\newtheorem{corollary}[PARA]{Corollary} 
\newtheorem{lemma}[PARA]{Lemma} 
\newtheorem{proposition}[PARA]{Proposition} 
\newtheorem{definition}[PARA]{Definition} 
\theoremstyle{definition} 
\newtheorem{remark}[PARA]{Remark} 
\newtheorem{example}[PARA]{Example} 
\numberwithin{equation}{section} 
\newcommand{\para}{\begin{PARA}\rm} 
\newcommand{\arap}{\end{PARA}\rm} 
\newcommand{\dfn}{\begin{definition}\rm} 
\newcommand{\nfd}{\end{definition}\rm} 
\newcommand{\rmk}{\begin{remark}\rm} 
\newcommand{\kmr}{\end{remark}\rm} 
\newcommand{\xmpl}{\begin{example}\rm} 
\newcommand{\lpmx}{\end{example}\rm} 
\newcommand{\cA}{\mathcal{A}}
\newcommand{\cC}{\mathcal{C}}
\newcommand{\cH}{\mathcal{H}} 
\newcommand{\cJ}{\mathcal{J}} 
\newcommand{\cL}{\mathcal{L}} 
\newcommand{\cM}{\mathcal{M}}
\newcommand{\cO}{\mathcal{O}} 
\newcommand{\cP}{\mathcal{P}}
\newcommand{\cW}{\mathcal{W}}
\newcommand{\og}{{\overline{\gamma}}} 
\newcommand{\ug}{{\underline{\gamma}}} 
\newcommand{\oev}{\overline{\mathrm{ev}}} 
\newcommand{\uev}{\underline{\mathrm{ev}}}
\newcommand{\olambda}{{\overline{\lambda}}} 
\newcommand{\ulambda}{{\underline{\lambda}}} 
\newcommand{\op}{{\overline{p}}} 
\newcommand{\up}{{\underline{p}}}
\newcommand{\one} 
{{{\mathchoice \mathrm{ 1\mskip-4mu l} \mathrm{ 1\mskip-4mu l} 
\mathrm{ 1\mskip-4.5mu l} \mathrm{ 1\mskip-5mu l}}}} 
\newcommand{\C}{{\mathbb{C}}}
\newcommand{\N}{{\mathbb{N}}} 
\newcommand{\R}{{\mathbb{R}}}
\renewcommand{\u}{{\mathbf{u}}}
\newcommand{\Z}{{\mathbb{Z}}} 
\newcommand{\ind}{\mathrm{ind}} 
\newcommand{\Jreg}{\cJ_{\mathrm{reg}}}   
\newcommand{\reg}{{\mathrm{reg}}} 
\newcommand{\eps}{{\varepsilon}} 
\newcommand{\om}{{\omega}}
\newcommand{\tf}{{\widetilde{f}}}
\def\NABLA#1{{\mathop{\nabla\kern-.5ex\lower1ex\hbox{$#1$}}}} 
\def\Nabla#1{\nabla\kern-.5ex{}_{#1}} 
\def\Tabla#1{\Tilde\nabla\kern-.5ex{}_{#1}} 
\renewcommand{\Tilde}{\widetilde}
\newcommand{\p}{{\partial}} 
\newcommand{\dbar}{{\bar\partial}}
\begin{document} 
 
\maketitle 
 
 
\begin{abstract} 
We define $S^1$-equivariant symplectic homology for symplectically aspherical manifolds with contact boundary, 
using a Floer-type construction first proposed by Viterbo. We show that it is related to the usual symplectic homology by a Gysin exact sequence. As an important ingredient of the proof, we define a parametrized version of symplectic homology, corresponding to families of Hamiltonian functions indexed by a finite dimensional smooth parameter space. 
\end{abstract} 
 
\tableofcontents 
 
 
\section{Introduction} 
 
The purpose of the current paper is twofold. On the one hand we provide the details of the definition of $S^1$-equivariant symplectic homology following Viterbo~\cite{V} and using the analysis developed in~\cite{BOtrans,BOparam}. On the other hand we construct the Gysin sequence and prove that it is compatible with the tautological exact sequence for symplectic homology. Along the way we are led to define a parametrized version of Floer homology which serves as an interpolating device between non-equivariant and equivariant theories. The purpose of the current paper is foundational, but we also include some simple applications. 

Our paper~\cite{BO4} gives a broader treatment of $S^1$-equivariant symplectic homology and contains alternative -- and more algebraic -- proofs of Theorems~\ref{thm:SGysin} and~\ref{thm:grid}. However, we believe that the geometric methods that we use in the current paper are interesting on their own.

\medskip
 
\noindent {\sc Topological background.} Given an oriented fibration 
$S^1\hookrightarrow M \stackrel \pi \to B$, the homology groups of the 
base and total space are related by the {\bf Gysin exact sequence}  
\begin{equation} \label{eq:Gysin} 
\ldots \to H_k(M) \stackrel {\pi_*} \to H_k(B) \stackrel D \to H_{k-2}(B) 
\to H_{k-1}(M) \to \ldots 
\end{equation} 
Here $D$ is the cap-product with the Euler class of the fibration and 
is equal to the differential $d^2$ of the Leray-Serre spectral 
sequence~\cite[Example~5.C]{McC}.  
 
A particular case of the above construction is the following. Assume $M$ 
carries an $S^1$-action and define the $S^1$-equivariant homology 
$H_*^{S^1}(M)$ by  
$$ 
H_*^{S^1}(M) := H_*(M_{S^1}), \qquad M_{S^1}:= M\times_{S^1} ES^1,   
$$ 
where $ES^1$ is a contractible space on which $S^1$ acts freely.  
Since $S^1$ acts freely on $M\times ES^1$, the 
projection $M \times ES^1 \to M_{S^1}$ is an $S^1$-fibration and the 
exact sequence~\eqref{eq:Gysin} becomes 
\begin{equation} \label{eq:equiGysin} 
 \ldots \to H_k(M) \to H_k^{S^1}(M) \stackrel D \to H_{k-2}^{S^1}(M)  
\to H_{k-1}(M) \to \ldots 
\end{equation} 
We call this the {\bf Gysin exact sequence for \boldmath$S^1$-equivariant 
homology}.  
Two relevant instances of this construction are the following: 
 
(i) If the action of $S^1$ on $M$ is free 
then $H_*^{S^1}(M)\simeq H_*(M/S^1)$ and the Gysin exact sequence for 
$S^1$-equivariant homology is the Gysin exact sequence for the fibration 
$S^1\hookrightarrow M \to M/S^1$.  
 
(ii) We denote $BS^1:=ES^1/S^1$. Taking the model of $ES^1$ to be 
$S^\infty := \lim_{N\to \infty} S^{2N+1}$, with $S^{2N+1}$ the unit 
sphere in $\C^{N+1}$, we see that $BS^1\simeq \C P^\infty$. Now, if 
$S^1$ acts trivially on $M$, then   
$H_*^{S^1}(M)\simeq H_*(M)\otimes H_*(BS^1)$ and~\eqref{eq:equiGysin} 
becomes    
$$ 
\ldots \stackrel 0 \to H_k(M) \stackrel i \to \bigoplus _{m\ge 0} 
H_{k-2m}(M) \stackrel p \to \bigoplus _{m\ge 1} H_{k-2m}(M)  
\stackrel 0 \to H_{k-1}(M) \to \ldots 
$$ 
Here $i$ and $p$ are the obvious inclusion and projection. 
 
\medskip  
 
\noindent {\sc Main results.} This paper is concerned with a Floer homology long exact sequence 
of Gysin type. Let $(W,\om)$ be a symplectic manifold with contact 
type boundary satisfying 
\begin{equation} \label{eq:asph} 
\int _{T^2} f^*\om =0 \quad \mbox{for all smooth } f:T^2\to W. 
\end{equation}  
Our main class of examples consists of exact symplectic manifolds. Let 
$a$ be a free homotopy class of loops in $W$. One can define in this 
situation {\bf symplectic homology groups} $SH_*^a(W)$ and {\bf 
\boldmath$S^1$-equivariant symplectic homology groups} $SH_*^{a,S^1}(W)$, as 
well as variants $SH_*^+(W)$, $SH_*^{+,S^1}(W)$ truncated in positive 
values of the action functional when $a=0$. The original definition was outlined by Viterbo~\cite{V} and we present all the details in~\S\ref{sec:S1equivsymplhom}. Our first result is the following.  
 
\begin{theorem} \label{thm:SGysin} 
The symplectic homology groups fit into an exact sequence of 
Gysin type (we allow $a=+$) 
\begin{equation} \label{eq:SGysin} 
 \ldots \to SH_k^a(W) \to SH_k^{a,S^1}(W) \stackrel D \to 
SH_{k-2}^{a,S^1}(W) \to SH_{k-1}^a(W) \to \ldots 
\end{equation} 
\end{theorem}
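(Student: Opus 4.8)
The plan is to realize the sequence \eqref{eq:SGysin} as the long exact sequence in homology of a short exact sequence of chain complexes, constructed from the chain-level model of $S^1$-equivariant symplectic homology set up in \S\ref{sec:S1equivsymplhom}. Recall that, for a cofinal Hamiltonian $H$ and a perfect Morse function on $\C P^N$, the $N$-th approximation of the $S^1$-equivariant Floer complex is, as a graded abelian group, $FC^{S^1,N}_\ast(H)=FC^a_\ast(H)\otimes_\Z \Z[u]/(u^{N+1})$ with $\deg u=2$, and its differential has the triangular form
\[
\partial^{S^1}(x\otimes u^m)=\sum_{j=0}^{m}(\partial_j x)\otimes u^{m-j},
\]
where $\partial_0$ is the ordinary Floer differential and the operators $\partial_j\colon FC^a_\ast(H)\to FC^a_{\ast+2j-1}(H)$, $j\ge 1$, do not depend on $N$. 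The group $SH^{a,S^1}_\ast(W)$ is obtained from the complexes $FC^{S^1,N}_\ast(H)$ by passing to the colimit over the skeletal inclusions $\C P^{N-1}\hookrightarrow\C P^N$ and over the Hamiltonians $H$, and the action-truncated group $SH^{+,S^1}_\ast(W)$ is defined in the same way.

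The key point is then elementary. Since no negative powers of $u$ occur --- equivalently, since $\C P^0$ is the bottom cell --- the map $\iota\colon FC^a_\ast(H)\hookrightarrow FC^{S^1,N}_\ast(H)$, $x\mapsto x\otimes 1$, identifies $FC^a_\ast(H)$ with a subcomplex, on which $\partial^{S^1}$ restricts to $\partial_0$. Dually, $D(x\otimes u^m):=x\otimes u^{m-1}$, with $x\otimes 1\mapsto 0$, defines a surjective chain map $FC^{S^1,N}_\ast(H)\to FC^{S^1,N-1}_{\ast-2}(H)$ of degree $-2$ with kernel exactly $\iota\big(FC^a_\ast(H)\big)$; that $D\circ\partial^{S^1}=\partial^{S^1}\circ D$ is read off directly from the triangular form above together with the $N$-independence of the operators $\partial_j$. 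This yields a short exact sequence of chain complexes
\[
0\longrightarrow FC^a_\ast(H)\stackrel{\iota}{\longrightarrow} FC^{S^1,N}_\ast(H)\stackrel{D}{\longrightarrow} FC^{S^1,N-1}_{\ast-2}(H)\longrightarrow 0 .
\]
One then checks that $\iota$ and $D$ are compatible with the structure maps of both colimits (the skeletal inclusions and the continuation morphisms in $H$); since filtered colimits are exact, passing to the limit produces a short exact sequence of chain complexes computing respectively $SH^a_\ast(W)$, $SH^{a,S^1}_\ast(W)$ and $SH^{a,S^1}_{\ast-2}(W)$. Its long exact sequence in homology is precisely \eqref{eq:SGysin}, with $D$ the induced degree $-2$ map; the chain-level formula for $D$ exhibits it as the Floer-theoretic incarnation of cap product with the Euler class of $BS^1$, so \eqref{eq:SGysin} is indeed the analogue of the topological sequence \eqref{eq:equiGysin}. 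The case $a=+$, and that of a nonzero free homotopy class $a$, require no new idea: the operators $\partial_j$ are all action non-increasing, so the construction descends to the action-truncated complexes, and it is natural under continuation, so it passes through the colimit over $H$ exactly as before.

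It is worth being clear about where the real work lies. The argument above is essentially formal once the chain model of \S\ref{sec:S1equivsymplhom} is in hand; the substance is in that model itself --- in particular in the assertions that $(\partial^{S^1})^2=0$ and that $\partial^{S^1}$ has the stated triangular form with $N$-independent off-diagonal operators $\partial_j$. These rest on the parametrized transversality results of \cite{BOtrans,BOparam} and on the compactness and gluing analysis for the parametrized moduli spaces attached to the Morse function on $\C P^N$. Granting those, the only thing needing genuine care here is the bookkeeping in the double colimit (over $N$ and over $H$) and the naturality of $\iota$ and $D$, both routine. Thus the principal obstacle is not internal to this proof but is precisely the construction of the equivariant theory that precedes it.
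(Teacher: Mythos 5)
Your strategy is viable in principle --- it is essentially the algebraic route of the companion paper \cite{BO4}, which the introduction explicitly cites as giving ``alternative and more algebraic'' proofs of Theorem~\ref{thm:SGysin} --- but it is not the proof given here, and relative to the constructions of \S\ref{sec:S1equivsymplhom} it has a genuine gap. The complex defined in this paper is $SC^{a,S^1,N}_*(H)$, generated over $\Lambda_\omega$ by the critical circles $S_p$ of the parametrized action functional on $C^\infty(S^1,\widehat W)\times S^{2N+1}$, with $\partial^{S^1}$ counting $S^1$-equivariant Floer trajectories modulo the $S^1$-action. Nothing in the paper identifies this complex with $FC^a_*(H)\otimes\Z[u]/(u^{N+1})$ carrying a $u$-triangular differential whose $u^0$-component is the ordinary Floer differential and whose higher components are independent of $N$; your opening ``Recall that\dots'' assumes a chain model that is not constructed here. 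Producing it is the real content: one must use Hamiltonian families adapted to a standard perfect Morse function on $\C P^N$, match the critical circles with pairs (nonequivariant generator, critical point on $\C P^N$), show that $\partial^{S^1}$ respects the $\C P^N$-index filtration with lowest-order term the ordinary Floer differential (this needs regular data for the nonequivariant Floer equation compatible with $S^1$-invariance over the bottom critical circle --- possible because the action on $S^{2N+1}$ is free, but requiring proof), and establish $N$-independence of the higher terms as well as $u$-triangularity of the continuation maps so that your $\iota$ and $D$ pass to the double colimit. Calling that last point ``routine'' hides exactly the filtered-continuation statements the paper has to prove separately (Lemma~\ref{lem:continuation}, Propositions~\ref{prop:chain-homotopy} and~\ref{prop:composition}).

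For comparison, the paper's proof runs on the other side of the triangle: it filters the parametrized Morse--Bott complex $BC^{a,N}_*(H)$, whose homology computes $SH^{a,S^{2N+1}}_*$, i.e.\ $SH^a_*(W)\otimes H_*(S^{2N+1})$ by the K\"unneth formula (Proposition~\ref{prop:Kunneth}, Lemma~\ref{lem:limit}), by $-\mu(p)-2\langle c_1(T\widehat W),A\rangle$. Vanishing of $d^0$ (Proposition~\ref{prop:d0}) and the identification of $(E^1,\bar d^1)$ with $(SC^{a,S^1,N}_*(H)\otimes H_*(S^1),\partial^{S^1}\otimes\mathrm{Id})$ (Proposition~\ref{prop:d1}) give a two-line spectral sequence whose degeneration at $E^3$ is the exact sequence~\eqref{eq:seq}; in effect the nonequivariant complex is exhibited as the cone of a degree $-2$ self-map of the equivariant complex (\S\ref{sec:Gysin-cone}), rather than the equivariant complex being exhibited, as in your proposal, as an extension of its own shift by the nonequivariant one. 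To complete your argument within this paper's framework you would have to construct the $u$-model and its comparison with $SC^{a,S^1,N}_*(H)$ from scratch; as written, the proposal defers precisely the step that constitutes the proof.
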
 
 
As a matter of fact, we prove in~\cite{BO4} that the above Gysin exact sequence for $a=+$ is isomorphic to the long exact sequence of~\cite{BOcont}, relating $SH_*^+(W)$ with the linearized contact homology of the filled contact manifold $\p W$. 
 
In the case $a=0$, the symplectic 
homology groups   
$$ 
SH_*(W):= SH_*^0(W), \qquad SH_*^{S^1}(W):= SH_*^{0,S^1}(W) 
$$ 
also fit into tautological long exact sequences~\cite{V} 
\begin{equation} \label{eq:taut} 
\ldots \to SH_{*+1}^+(W) \to H_{*+n}(W,\p W) \to 
SH_*(W) \to SH_*^+(W) \to \ldots \ , 
\end{equation} 
\begin{equation} \label{eq:tautS1} 
\ldots \to SH_{*+1}^{+,S^1}(W) \to H_{*+n}^{S^1}(W,\p W) \to 
SH_*^{S^1}(W) \to SH_*^{+,S^1}(W) \to \ldots  
\end{equation} 
Here the $S^1$-equivariant homology of the pair $(W,\p W)$ is 
considered with respect to the trivial action of $S^1$. Our next 
result is that the Gysin exact sequence is compatible with these 
tautological exact sequences.  
 
\begin{theorem} \label{thm:grid}  
There is a commutative diagram whose rows and columns are, respectively, the tautological and Gysin exact sequences  
\begin{equation} \label{eq:grid}  
\xymatrix 
@C=10pt 
@R=6pt 
{ 
& \vdots \ar[d] & \vdots \ar[d] & \vdots \ar[d] & \vdots \ar[d] & \\  
\ldots \ar[r] & SH_{k+1}^+ \ar[r] \ar[d] & H_{k+n} \ar[r] \ar[d] & 
SH_k \ar[r] \ar[d] & SH_k^+ \ar[r] \ar[d] & \ldots \\ 
\ldots \ar[r] & SH_{k+1}^{+,S^1} \ar[r] \ar[d] & H_{k+n}^{S^1} \ar[r] \ar[d] & 
SH_k^{S^1} \ar[r] \ar[d] & SH_k^{+,S^1} \ar[r] \ar[d] & \ldots \\ 
\ldots \ar[r] & SH_{k-1}^{+,S^1} \ar[r] \ar[d] & H_{k+n-2}^{S^1} 
\ar[r] \ar[d] & SH_{k-2}^{S^1} \ar[r] \ar[d] & SH_{k-2}^{+,S^1} \ar[r] 
\ar[d] & \ldots \\  
\ldots \ar[r] & SH_k^+ \ar[r] \ar[d] & H_{k+n-1} \ar[r] \ar[d] & 
SH_{k-1} \ar[r] \ar[d] & SH_{k-1}^+ \ar[r] \ar[d] & \ldots \\ 
& \vdots & \vdots & \vdots & \vdots &  
} 
\end{equation}  
\end{theorem}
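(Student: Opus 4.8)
The plan is to realise all the homology groups occurring in~\eqref{eq:grid} as the homologies of chain complexes fitting into a single commutative $3\times 3$ diagram of complexes whose three rows are short exact sequences --- inducing, after $H_*$, the tautological sequences --- and whose three columns are short exact sequences, inducing the Gysin sequences. Diagram~\eqref{eq:grid} then follows by taking homology and keeping track of the induced maps and connecting homomorphisms.

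First I would recall the short exact sequence of Floer complexes underlying~\eqref{eq:taut},
\[
0\to SC^{\le\eps}_*\to SC_*\to SC^+_*\to 0,
\]
where $SC^{\le\eps}_*$ is the subcomplex generated by the orbits of small action, so that $H_*(SC^{\le\eps})\cong H_{*+n}(W,\p W)$, and $SC^+_*$ is the positive-action quotient. Running the parametrized, respectively $S^1$-equivariant, Floer construction of~\cite{BOparam} and~\S\ref{sec:S1equivsymplhom} on the same data gives the equivariant analogue
\[
0\to SC^{\le\eps,S^1}_*\to SC^{S^1}_*\to SC^{+,S^1}_*\to 0,
\]
with $H_*(SC^{\le\eps,S^1})\cong H^{S^1}_{*+n}(W,\p W)$ for the trivial action; here one uses that the action filtration is preserved by all the maps involved --- the auxiliary Morse data on the parameter spheres $S^{2N+1}$, the limit $N\to\infty$, and the continuation maps --- by the usual monotonicity arguments, together with exactness of direct limits. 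Next, from the proof of Theorem~\ref{thm:SGysin}, each of the three complexes $X^{S^1}_*\in\{SC^{\le\eps,S^1}_*,\,SC^{S^1}_*,\,SC^{+,S^1}_*\}$ carries the degree $-2$ chain map $D_X$ of Theorem~\ref{thm:SGysin} --- cap product with the Euler class, induced by the cellular chain complex of $BS^1=\C P^\infty$ --- which fits into a short exact sequence of complexes
\[
0\to X_*\to X^{S^1}_*\stackrel{D_X}{\to} X^{S^1}_{*-2}\to 0
\]
whose long exact sequence is the Gysin sequence of the corresponding flavour ($a=0$, $a=+$, and the topological Gysin sequence~\eqref{eq:equiGysin} of $(W,\p W)$ with trivial action, respectively). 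Since $D_X$ and the inclusions $X_*\hookrightarrow X^{S^1}_*$ are natural in $X$ and do not interact with the action filtration, the horizontal maps $SC^{\le\eps}\hookrightarrow SC\twoheadrightarrow SC^+$ and their equivariant versions commute with the vertical maps; this is precisely the announced commutative $3\times 3$ diagram of chain complexes with exact rows and columns.

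Applying $H_*$ to such a diagram yields a doubly-infinite lattice of abelian groups in which the long exact sequences of the three row short exact sequences, unrolled, account for all the rows of~\eqref{eq:grid}, and those of the three column short exact sequences for all its columns; this is~\eqref{eq:grid} as a diagram of groups. Commutativity of its squares is automatic when all four arrows are induced by chain maps, and follows from naturality of the connecting homomorphism with respect to morphisms of short exact sequences when exactly one pair of opposite arrows is connecting. The only squares requiring a separate argument are those --- recurring with period $3$ in each direction, where a Gysin connecting map meets a tautological connecting map --- in which \emph{both} pairs of opposite arrows are connecting homomorphisms; for these one carries out the twofold diagram chase (the snake lemma applied twice), which produces the two composites up to an overall sign. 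That sign affects neither the exactness of the rows and columns nor any square other than these, and it is harmless: $D$ is canonical only up to the choice of orientation of the $S^1$-fibre, so one normalises its sign so that the whole lattice commutes on the nose (and over $\Z/2$ the issue does not arise at all). This proves~\eqref{eq:grid}.

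I expect the substantial part of the proof to lie in the second step rather than in the homological algebra of the last one, which is formal: one must ensure that the action filtration, the parametrized Floer differential over the spheres $S^{2N+1}$, the limit defining $SH^{S^1}$, and the Euler-class operator $D$ are all carried simultaneously by one coherent system of Floer complexes and continuation maps with the compatibilities used above. This is where the transversality results of~\cite{BOtrans} and the parametrized symplectic homology of~\cite{BOparam}, together with the constructions of~\S\ref{sec:S1equivsymplhom}, do the work; once the $3\times 3$ diagram of complexes is in place, Theorem~\ref{thm:grid} is immediate.
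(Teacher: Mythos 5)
Your overall strategy --- produce one commutative $3\times 3$ diagram of chain complexes whose rows induce the tautological sequences and whose columns induce the Gysin sequences, then take homology --- is indeed the shape of the paper's argument. But the chain-level input you feed into it contains a genuine gap. You assert that ``from the proof of Theorem~\ref{thm:SGysin}'' each flavour $X^{S^1}_*$ of the equivariant complex carries a degree $-2$ chain map $D_X$ sitting in a short exact sequence of complexes $0\to X_*\to X^{S^1}_*\stackrel{D_X}{\to}X^{S^1}_{*-2}\to 0$ whose long exact sequence is the Gysin sequence. No such structure is constructed in this paper, and it does not follow from the proof of Theorem~\ref{thm:SGysin}: in the geometric setup of \S\ref{sec:S1equivsymplhom} the equivariant complex $SC^{S^1,N}_*(H)$ is generated by the $S^1$-orbits $S_p$, the non-equivariant Floer complex is \emph{not} a subcomplex of it, and there is no chain-level ``multiplication by the Euler class'' defined on it. What the proof of Theorem~\ref{thm:SGysin} actually produces is the \emph{transposed} extension: the filtered Morse--Bott (parametrized) complex $BC^N_*(H,\{f_p\},J,g)$ --- whose homology gives the non-equivariant theory after K\"unneth and the limit $N\to\infty$ --- is the cone of the chain map $d^2$ between two copies of $(SC^{S^1,N}_*,\p^{S^1})$, and the Gysin sequence is the two-line spectral sequence, identified with the cone long exact sequence in Lemma~\ref{lem:Gysin}. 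A short exact sequence of the type you postulate does exist in the algebraic model $C_*\otimes\Z[u]$ (this is the route of~\cite{BO4}), but in the present framework it would itself require a construction at least as involved as the one you are trying to bypass; as written, your second step assumes the key object rather than producing it.

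The paper's proof then runs your formal step with the correct chain-level structure: the action filtration turns $d^2$ into a morphism of short exact sequences of complexes~\eqref{eq:bard2}; Lemma~\ref{lem:grid} (cones of a morphism of short exact sequences, via the $3\times3$ diagram~\eqref{eq:cone-fgh}) yields the lattice of long exact sequences; and Lemma~\ref{lem:Gysin} identifies the cone sequences with the Gysin sequences coming from the spectral sequence, which is an identification you also need but only assert (``whose long exact sequence is the Gysin sequence of the corresponding flavour''). Two smaller remarks: your sign discussion of the squares where two connecting homomorphisms meet is consonant with the paper, whose Lemma~\ref{lem:grid} records that this square anti-commutes; and your first step (the action-filtered short exact sequences and their compatibility with continuation maps and the limits in $H$ and $N$) is fine and corresponds to Lemma~\ref{lem:continuation} and the direct-limit arguments. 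The irreducible missing piece is the chain-level realization of the map $D$, which is exactly where the Morse--Bott/cone analysis of \S\ref{sec:MBparam}--\S\ref{sec:Gysin-cone} does the work.
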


\medskip 

\noindent {\sc Examples.} We discuss the consequences of our main theorems for two important classes of symplectic manifolds. 

\smallskip \noindent {\it Cotangent bundles.} Let $L$ be a closed oriented and spin Riemannian manifold, and denote by $\Lambda L$ the free loop space of $L$. We consider  the symplectic manifold $W=DT^*L=\{p\in T^*L\, : \, \|p\|\le 1\}$. It was proved by Viterbo~\cite{Vcotangent} that 
$$
SH_*(DT^*L)\simeq H_*(\Lambda L), \qquad SH_*^{S^1}(DT^*L)\simeq H_*^{S^1}(\Lambda L).
$$ 
Alternative proofs for the first isomorphism are due to Abbondandolo and Schwarz~\cite{AS1}, respectively to Salamon and Weber~\cite{SW}. Moreover, the homology groups $H_*(\Lambda L)$ and $H_*(\Lambda L)$ should be understood as twisted by local coefficients given by the second Stiefel-Whitney class of $L$ (Kragh, Seidel, Abouzaid~\cite{AS3}).
Our proof of Theorem~\ref{thm:SGysin} can be combined with the methods of~\cite{AS1} in order to prove that the long exact sequence~\eqref{eq:SGysin} is isomorphic to the Gysin sequence for $\Lambda L$, namely
\begin{equation} \label{eq:Gysinloop}
\xymatrix
@C=20pt
{
\dots \ar[r] & H_*(\Lambda L) \ar[r]^-E & H_*^{S^1}(\Lambda L) \ar[r]^-D &
H_{*-2}^{S^1}(\Lambda L) \ar[r]^-M & H_{*-1}(\Lambda L) \ar[r] & \dots
}
\end{equation}
Similarly, for $a=+$, we obtain the Gysin sequence of the pair $(\Lambda^0L,L)$, where $\Lambda^0L$ is the component of free contractible loops in $L$.

\smallskip \noindent {\it Subcritical Stein manifolds.} A subcritical Stein manifold is a complex manifold $(W,J)$, of complex dimension $n$, endowed with a pluri-subharmonic function $\phi:W\to \R$, satisfying the following conditions: (i) the boundary $\p W$ is a regular level set of $\phi$ along which $\vec\nabla \phi$ points outwards; (ii) $\phi$ is Morse and the index of all its critical points is strictly smaller than $n$. The complex structure $J$ is compatible with the natural symplectic form $\omega_\phi:=-d(d\phi\circ J)$. 

It was proved by Cieliebak~\cite{C} that $SH_*(W)=0$. His proof can be adapted in a straightforward way in order to show that $SH_*^{S^1}(W)=0$. However, this fact follows also from Theorem~\ref{thm:SGysin} in the case $c_1(W)=0$. 

\begin{corollary} \label{cor:subcrit}
Assume $W$ is a subcritical Stein manifold with $c_1(W)=0$. Then we have $SH_*^{S^1}(W)=0$ and there is an isomorphism of exact sequences
$$
{\scriptsize
\xymatrix
@C=10pt
{
\dots \ar[r] & SH_*^+(W) \ar[r] \ar[d]_\simeq & SH_*^{+,S^1}(W) \ar[r]^-D \ar[d]_\simeq &
SH_{*-2}^{+,S^1}(W) \ar[r] \ar[d]_\simeq & SH_{*-1}^+(W) \ar[r] \ar[d]_\simeq & \dots \\
\dots \ar[r]^-0 & H_{*+n-1}(W,\p W) \ar[r] & H_{*+n-1}^{S^1}(W,\p W) \ar[r] & H_{*+n-3}^{S^1}(W,\p W) \ar[r]^-0 & H_{*+n-2}(W,\p W) \ar[r] & \dots
}
}
$$
\end{corollary}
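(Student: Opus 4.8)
The plan is to derive everything from the vanishing $SH_*(W)=0$ together with the two main theorems of the paper. First I would record that Cieliebak's argument in \cite{C}, which shows $SH_*(W)=0$ for a subcritical Stein manifold, is purely a handle-splitting/index computation on the level of Floer trajectories; running the same argument on the parametrized (Borel) complex that underlies $SH_*^{S^1}(W)$ — the parametrized symplectic homology constructed in this paper as an interpolation device — shows $SH_*^{S^1}(W)=0$ directly. Alternatively, and this is the point worth emphasizing, one can avoid re-doing Cieliebak's work when $c_1(W)=0$: feed $SH_*(W)=0$ into the Gysin exact sequence \eqref{eq:SGysin} with $a=0$. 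The three-term pieces $SH_{k+2}(W)\to SH_k^{S^1}(W)\xrightarrow{D} SH_{k-2}^{S^1}(W)\to SH_{k+1}(W)$ then force $D\colon SH_k^{S^1}(W)\to SH_{k-2}^{S^1}(W)$ to be an isomorphism for every $k$. Since the grading of $SH_*^{S^1}(W)$ is bounded below (for fixed free homotopy class $a=0$ the Conley–Zehnder indices of the relevant orbits are bounded below once $c_1(W)=0$, so each homology group is finitely generated degreewise and vanishes in sufficiently negative degrees), an isomorphism $D$ that shifts degree down by two and is defined on a bounded-below graded group must have trivial source and target: iterating $D$ drives any class into negative degrees. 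Hence $SH_*^{S^1}(W)=0$.

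Next I would produce the claimed isomorphism of exact sequences. Apply Theorem \ref{thm:grid}: the commutative grid \eqref{eq:grid} has the tautological sequences \eqref{eq:taut}, \eqref{eq:tautS1} as rows and the Gysin sequences as columns. Now use $SH_*(W)=0$ and $SH_*^{S^1}(W)=0$, which we have just established. In the top row of \eqref{eq:grid}, the vanishing of $SH_k$ turns $H_{k+n}(W,\p W)\to SH_k$ into the zero map and $SH_{k+1}^+\to H_{k+n}$ into (part of) an isomorphism; more precisely the tautological sequence \eqref{eq:taut} degenerates into short exact pieces $0\to H_{k+n}(W,\p W)\to SH_k^+(W)\to 0$ shifted appropriately, giving $SH_{*+1}^+(W)\xrightarrow{\ \simeq\ } H_{*+n}(W,\p W)$, i.e. $SH_k^+(W)\cong H_{k+n-1}(W,\p W)$. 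Likewise \eqref{eq:tautS1} with $SH_*^{S^1}(W)=0$ gives $SH_k^{+,S^1}(W)\cong H_{k+n-1}^{S^1}(W,\p W)$. These isomorphisms are the vertical arrows in the displayed diagram of the corollary.

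Finally I would check that these vertical isomorphisms intertwine the two Gysin sequences — the one for $SH^{+}/SH^{+,S^1}$ (a column of \eqref{eq:grid}, valid for $a=+$ by Theorem \ref{thm:SGysin}) and the topological Gysin sequence for the pair $(W,\p W)$ with trivial $S^1$-action described in example (ii) of the topological background, whose connecting maps $D$ are zero because the Euler class is zero. Commutativity of the relevant squares in \eqref{eq:grid} is exactly what does this: the connecting isomorphisms $SH_{*+1}^+\cong H_{*+n}(W,\p W)$ commute with the maps induced by inclusion of non-equivariant into equivariant complexes, and with $D$, because all of these are edges of the single commutative grid \eqref{eq:grid}. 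In particular $D$ on $SH_*^{+,S^1}(W)$ corresponds under the isomorphisms to $D$ on $H_*^{S^1}(W,\p W)$, which vanishes; this is consistent with, and in fact re-proves, the degeneration of the bottom Gysin sequence into the split short exact sequences $0\to H_k(W,\p W)\to H_k^{S^1}(W,\p W)\to H_{k-2}^{S^1}(W,\p W)\to 0$.

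I expect the main obstacle to be the bounded-below argument that upgrades "$D$ is an isomorphism" to "$SH_*^{S^1}(W)=0$": one must be careful that, for the trivial free homotopy class and $c_1(W)=0$, the $S^1$-equivariant complex is indeed supported in degrees bounded below (this uses the grading conventions of \S\ref{sec:S1equivsymplhom} and the index computation for the admissible Hamiltonians), and that degreewise finiteness or at least a bounded-below support is genuinely available rather than merely plausible. The rest is diagram chasing in \eqref{eq:grid}, which is routine once Theorems \ref{thm:SGysin} and \ref{thm:grid} are in hand.
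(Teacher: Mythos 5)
Your overall strategy is the one the paper uses: feed $SH_*(W)=0$ (Cieliebak) into the Gysin sequence \eqref{eq:SGysin} to see that $D\colon SH_k^{S^1}(W)\to SH_{k-2}^{S^1}(W)$ is an isomorphism for all $k$, deduce $SH_*^{S^1}(W)=0$ by iterating $D$ against a vanishing-in-low-degrees statement, and then read off the isomorphism of exact sequences from the grid \eqref{eq:grid}, since the columns through $SH_*$ and $SH_*^{S^1}$ vanish. The genuine gap is exactly the step you flag but do not close: the claim that the equivariant complex is supported in degrees bounded below ``once $c_1(W)=0$''. The hypothesis $c_1(W)=0$ only makes the Conley--Zehnder indices well defined; it does not bound them below, and for a general contact form on the boundary of a Liouville domain with $c_1=0$ the Reeb orbits can have arbitrarily negative index. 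The paper supplies the missing geometric input: by M.-L.~Yau~\cite[Theorem~3.1.III, Lemma~4.2]{Y}, for a \emph{subcritical} Stein manifold one can choose the plurisubharmonic function $\phi$ so that all closed characteristics on $\p W$ have positive Conley--Zehnder index; combined with the split-Hamiltonian description of the equivariant chain complex (as in the proof of Lemma~\ref{lem:minus}, where the $BS^1$-direction only shifts degrees upward and the interior generators have bounded index), this shows the chain complex itself vanishes in sufficiently negative degrees, which is what starts the induction. Without this (or an equivalent substitute, such as actually running Cieliebak's handle argument on the parametrized complex, which you mention only in passing), the assertion ``$D$ is a degree $-2$ isomorphism on a bounded-below group, hence the group is zero'' has no foundation.

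A secondary slip: in your last paragraph you say the maps $D$ in the topological Gysin sequence for $(W,\p W)$ with the trivial action vanish ``because the Euler class is zero''. The Euler class of the Borel fibration is the pullback of the generator of $H^2(BS^1)$ and is not zero; for the trivial action $D$ is the surjection $\bigoplus_{m\ge 0}H_{k-2m}\to\bigoplus_{m\ge 1}H_{k-2m}$, and it is the connecting maps back to $H_*(W,\p W)$ (the maps labelled $0$ in the corollary's diagram) that vanish. Your own displayed short exact sequences $0\to H_k(W,\p W)\to H_k^{S^1}(W,\p W)\to H_{k-2}^{S^1}(W,\p W)\to 0$ contradict the claim that $D=0$. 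This does not affect the main argument, since the identification of the two Gysin sequences is forced by Theorem~\ref{thm:grid} once the $SH_*$ and $SH_*^{S^1}$ columns vanish, but the stated reason should be corrected.
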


\begin{proof}
Applying Theorem~\ref{thm:SGysin} we obtain that $SH_k^{S^1}(W)\simeq SH_{k-2}^{S^1}(W)$ for all $k\in \Z$.
It was proved by M.-L.~Yau~\cite[Theorem~3.1.III, Lemma~4.2]{Y} that one can choose the plurisubharmonic function $\phi$ so that the Conley-Zehnder indices of all closed characteristics on $\p W$ are positive. (Note that the Conley-Zehnder indices are well-defined due to the assumption $c_1=0$.) It follows from the definition of $S^1$-equivariant symplectic homology in~\S\ref{sec:S1equivsymplhom} that the underlying chain complex is zero if the degree is small enough (one can use "split" Hamiltonians as in the proof of Lemma~\ref{lem:minus}). Reasoning by induction, it follows that $SH_*^{S^1}(W)=0$. The isomorphism of exact sequences follows immediately from Theorem~\ref{thm:grid}, since the columns involving $SH_*$ and $SH_*^{S^1}$ vanish identically. 
\end{proof}

\medskip 

\noindent {\sc Algebraic Weinstein conjecture.} Following Viterbo~\cite{V}, we say that $W$ satisfies the \emph{Strong Algebraic Weinstein Conjecture (SAWC)} if the map 
$$
H_{2n}(W,\p W)\to SH_n(W)
$$ 
vanishes. Let $\mu_{2n}\in H_{2n}(W,\p W)$ be the fundamental class and $u_k$ be a generator of $H_{2k}(BS^1)$, $k\ge 0$.
We say that $W$ satisfies the \emph{Strong Equivariant Algebraic Weinstein Conjecture (EWC)} if, for all $k\ge 0$, the element 
$\mu_{2n}\otimes u_k$ lies in the kernel of the map 
$$
H_{2n+2k}^{S^1}(W,\p W)\to SH_{n+2k}^{S^1}(W).
$$ 
Our next result clarifies the relationship between $SAWC$ and $EWC$, which are the two key notions in Viterbo's fundamental paper~\cite{V}.
\begin{corollary} \label{cor:WC} $SAWC\Longrightarrow EWC$. 
\end{corollary}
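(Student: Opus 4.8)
The plan is to chase the class $\mu_{2n}\otimes u_k$ through the commutative grid of Theorem~\ref{thm:grid}, whose middle column is the Gysin sequence of $(W,\p W)$ for the \emph{trivial} $S^1$-action. As explained after~\eqref{eq:equiGysin} in the introduction, this means that $H_*^{S^1}(W,\p W)=H_*(W,\p W)\otimes H_*(BS^1)$, that the Gysin maps $SH^a\to SH^{a,S^1}$ (which I write $E$) carry $\mu_{2n}$ to $\mu_{2n}\otimes u_0$, that the Gysin differential $D$ on this column carries $\mu_{2n}\otimes u_j$ to $\mu_{2n}\otimes u_{j-1}$, and that the third Gysin map on this column is zero; moreover, since $\dim W=2n$ forces $H_m(W,\p W)=0$ for $m>2n$, the map $D\colon H_{2n+2j}^{S^1}(W,\p W)\to H_{2n+2j-2}^{S^1}(W,\p W)$ is an isomorphism for every $j\ge 1$. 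With $\psi$ and $\psi^{S^1}$ denoting the tautological maps $H_{2n}(W,\p W)\to SH_n(W)$ of~\eqref{eq:taut} and $H_{2n+2k}^{S^1}(W,\p W)\to SH_{n+2k}^{S^1}(W)$ of~\eqref{eq:tautS1}, $SAWC$ asserts $\psi(\mu_{2n})=0$ and $EWC$ asserts $\psi^{S^1}(\mu_{2n}\otimes u_k)=0$ for all $k\ge 0$.

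I would argue by induction on $k$. The case $k=0$ is immediate: the square of Theorem~\ref{thm:grid} joining $\psi$ to $\psi^{S^1}$ through the vertical (Gysin) maps gives $\psi^{S^1}(\mu_{2n}\otimes u_0)=\psi^{S^1}(E\mu_{2n})=E\psi(\mu_{2n})=0$. For the inductive step, suppose $\psi^{S^1}(\mu_{2n}\otimes u_{k-1})=0$; the square of Theorem~\ref{thm:grid} joining $\psi^{S^1}$ at consecutive $BS^1$-levels through $D$ then gives $D\bigl(\psi^{S^1}(\mu_{2n}\otimes u_k)\bigr)=\psi^{S^1}(\mu_{2n}\otimes u_{k-1})=0$, so by exactness of the Gysin sequence of Theorem~\ref{thm:SGysin} there is $\beta\in SH_{n+2k}(W)$ with $\psi^{S^1}(\mu_{2n}\otimes u_k)=E\beta$. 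It remains to show $E\beta=0$. This is cleanest to organize in terms of lifts: $SAWC$ gives $\alpha\in SH_{n+1}^{+}(W)$ with $\partial\alpha=\mu_{2n}$ in~\eqref{eq:taut}, so $\alpha_0:=E\alpha$ satisfies $\partial^{S^1}\alpha_0=\mu_{2n}\otimes u_0$ (square of Theorem~\ref{thm:grid} for the connecting maps), and if $\alpha_{k-1}$ can be lifted along the Gysin map $D\colon SH_{n+2k+1}^{+,S^1}(W)\to SH_{n+2k-1}^{+,S^1}(W)$ to an $\alpha_k$, then $D\bigl(\partial^{S^1}\alpha_k\bigr)=\partial^{S^1}\alpha_{k-1}=\mu_{2n}\otimes u_{k-1}=D(\mu_{2n}\otimes u_k)$ together with the injectivity of $D$ on $H_{2n+2k}^{S^1}(W,\p W)$ forces $\partial^{S^1}\alpha_k=\mu_{2n}\otimes u_k$, whence $\psi^{S^1}(\mu_{2n}\otimes u_k)=\psi^{S^1}\bigl(\partial^{S^1}\alpha_k\bigr)=0$ by exactness of~\eqref{eq:tautS1}.

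The heart of the matter, and the step I expect to be the main obstacle, is showing that this lift exists after a suitable modification of $\alpha_{k-1}$ within its indeterminacy $\ker\partial^{S^1}=\operatorname{im}\bigl(SH_{n+2k-1}^{S^1}(W)\to SH_{n+2k-1}^{+,S^1}(W)\bigr)$ --- equivalently, that the obstruction class $M\alpha_{k-1}\in SH_{n+2k}^{+}(W)$ (with $M$ the remaining Gysin map for $a=+$) can be made to vanish. The grid square for the connecting maps shows that $M\alpha_{k-1}$ maps to $0$ in $H_{2n+2k-1}(W,\p W)$, hence by~\eqref{eq:taut} lies in the image of $c\colon SH_{n+2k}(W)\to SH_{n+2k}^{+}(W)$, a map which is injective precisely because $H_{2n+2k}(W,\p W)=0$; writing $M\alpha_{k-1}=c\beta'$ and running it through the grid squares relating the Gysin sequences for $a=0$ and $a=+$, one reduces to showing $E\beta'=0$, which by the same mechanism loops back to a statement about $\psi^{S^1}$ on $H_{2n+2k}^{S^1}(W,\p W)$. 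Closing this loop seems to require more than the formal exactness and commutativity of the grid --- presumably working at the chain level, or with the finite-dimensional approximations $S^{2N+1}\to\C P^N$ of $ES^1\to BS^1$ used in~\S\ref{sec:S1equivsymplhom} to build $SH^{S^1}$, where the primitive witnessing $SAWC$ can be propagated up the tower of skeleta --- and this is the only genuinely non-formal point in the argument.
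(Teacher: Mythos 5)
Your base case and the overall strategy (chasing $\mu_{2n}\otimes u_k$ through the grid of Theorem~\ref{thm:grid} and inducting on $k$) match the paper, but your inductive step has a genuine gap, and you have correctly identified where it is: after using exactness to write $\psi^{S^1}(\mu_{2n}\otimes u_k)=E\beta$ with $\beta\in SH_{n+2k}(W)$, you cannot show $E\beta=0$ by formal diagram chasing alone, and your proposed remedy (chain-level propagation of a primitive through the finite-dimensional approximations $S^{2N+1}$) is not carried out and is not what is needed. The missing idea is the paper's opening observation: $SAWC$ is not merely the vanishing of one map, it is \emph{equivalent to the vanishing of all of} $SH_*(W)$. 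Indeed $SH_*(W)$ is a unital ring under the pair-of-pants product (McLean, Seidel), and the unit is precisely the image of the fundamental class $\mu_{2n}$ under $H_{2n}(W,\p W)\to SH_n(W)$; if the unit is zero, the whole ring is zero.

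Once $SH_*(W)=0$, the Gysin sequence~\eqref{eq:SGysin} for $a=0$ shows that $D\colon SH_k^{S^1}(W)\to SH_{k-2}^{S^1}(W)$ is an isomorphism in every degree, and your induction closes formally: in your notation $\beta$ lives in $SH_{n+2k}(W)=0$, so $E\beta=0$ trivially (equivalently, from $D\psi^{S^1}(\mu_{2n}\otimes u_{k+1})=\psi^{S^1}(\mu_{2n}\otimes u_k)=0$ and injectivity of $D$ on $SH^{S^1}$ one concludes directly, which is exactly how the paper argues via the middle square of~\eqref{eq:grid}). So the one genuinely non-formal input is algebraic --- the unitality of the product structure on $SH_*(W)$ --- rather than any chain-level or tower-of-skeleta argument; without it your proof does not close, and with it the elaborate lifting scheme with the classes $\alpha_k$ and the obstruction $M\alpha_{k-1}$ becomes unnecessary.
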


\begin{proof}
We first note that $SAWC$ is equivalent to the vanishing of $SH_*(W)$. This follows from the fact that $SH_*(W)$ is a ring with unit~\cite{McLean}, and the unit is the image of the fundamental class $\mu_{2n}$ under the map $H_{2n}(W,\p W)\to SH_n(W)$~\cite{Seidel-biased}. 

We now consider the top middle square in the commutative diagram~\eqref{eq:grid} of Theorem~\ref{thm:grid}. Since $\mu_{2n}\otimes u_0$ is the image of $\mu_{2n}$ under the injection $H_{2n}\to H_{2n}^{S^1}$, it follows that $\mu_{2n}\otimes u_0$ is in the kernel of $H_{2n}^{S^1}\to SH_n^{S^1}$. We now prove by induction that $\mu_{2n}\otimes u_k$ is in the kernel of $H_{2n+2k}^{S^1}\to SH_{n+2k}^{S^1}$. This follows from the middle square in the commutative diagram~\eqref{eq:grid}, using that $\mu_{2n}\otimes u_{k+1}$ is sent to $\mu_{2n}\otimes u_k$ by the map $H_{2n+2k+2}^{S^1}\to H_{2n+2k}^{S^1}$, and the fact that $SH_{n+2k+2}^{S^1}\to SH_{n+2k}^{S^1}$ is an isomorphism. 
\end{proof}

\begin{remark}
 The same argument as above shows that, under the assumption $SAWC$, the maps $H_{k+n}^{S^1}\to SH_k^{S^1}$ vanish for all $k\in\Z$.
\end{remark}

\medskip

\noindent {\sc Ramifications.} We now present several directions of investigation which are related to the present paper. 

\smallskip

\noindent {\it Algebraic structures.} The Gysin exact sequence~\eqref{eq:SGysin} can be used to define algebraic operations
in ($S^1$-equivariant) symplectic homology. 

As already mentioned in the proof of Corollary~\ref{cor:WC}, symplectic homology $SH_*(W)$ is a unitary ring, with the
pair-of-pants product. This is described by
Seidel~\cite{Seidel-biased}, and was used in a crucial way by McLean~\cite{McLean}
in his construction of exotic affine $\R^{2n}$'s. We denote the pair of pants
product by 
$$
\bullet:SH_k(W)\otimes SH_\ell(W)\longrightarrow SH_{k+\ell-n}(W).
$$

Let us write the Gysin exact sequence~\eqref{eq:SGysin} as 
$$
\xymatrix
@C=20pt
{
\dots \ar[r] & SH_*(W) \ar[r]^E & SH_*^{S^1}(W) \ar[r]^D &
SH_{*-2}^{S^1}(W) \ar[r]^M & SH_{*-1}(W) \ar[r] & \dots
}
$$
The notation is motivated by the isomorphism with the exact sequence~\eqref{eq:Gysinloop} in the case $W=DT^*L$. 
The letters $M$ and $E$ stand for ``mark'' and ``erase'', in the terminology of
Chas and Sullivan~\cite{CS}. It was proved by Abbondandolo and Schwarz~\cite{AS2} that, in the case $W=DT^*L$, the pair-of-pants product 
is identified with the Chas-Sullivan loop product~\cite{CS}.

Inspired by Chas and Sullivan~\cite{CS}, we formulate the following definitions and claims, which we will prove in a forthcoming paper. 
\begin{itemize}
\item[---] The map 
$$
\Delta:SH_*(W)\to SH_{*+1}(W), \qquad \Delta:=M\circ E
$$ 
is a \emph{Batalin-Vilkovisky (BV) operator}, in the sense that 
$\Delta^2=0$, and 
$$
\{\cdot,\cdot\} : SH_k(W)\otimes SH_\ell(W)\to SH_{k+\ell-n+1}(W),
$$
$$
\{a,b\} \  := \  \pm \, \Delta(a\bullet b) \pm  a \bullet \Delta(b) \pm
b\bullet \Delta(a)
$$
is a bracket on $SH_*(W)$ (called \emph{the loop bracket}). 
\item[---] The map 
$$
[\cdot,\cdot]:SH_k^{S^1}(W)\otimes SH_\ell^{S^1}(W)\to SH_{k+\ell-n+2}^{S^1}(W)
$$
$$
[a,b]:=\pm\, E(M(a)\bullet M(b))
$$
is a bracket on $SH_*^{S^1}(W)$ (called \emph{the string bracket}).
\end{itemize} 
We give a chain-level description of $\Delta$ in Remark~\ref{rmk:Delta}. The above claims are analogous to Theorems~4.7, 5.4, and 6.1 of~\cite{CS}. The string bracket can be further generalized as follows. Any operation 
$$
\widetilde \sigma:SH_*^{\otimes k}\to SH_*, k\ge 2
$$ 
yields an operation 
$$
\sigma:=E\circ \widetilde \sigma \circ M^{\otimes k}:(SH_*^{S^1})^{\otimes k}\to SH_*^{S^1}.
$$ 
One particular case is $\widetilde \sigma:=\bullet ^{\otimes k-1}$, $k\ge 2$, which yields higher-order operations on $SH_*^{S^1}$ analogous to the ones of~\cite[Theorem~6.2]{CS}.

The range of applications of such operations depends on their explicit
knowledge in particular situations (e.g. cotangent
bundles). However, the Chas-Sullivan string
operations are only beginning to be 
understood by topologists (see the work of Felix, Thomas, and
Vigu\'e-Poirrier~\cite{Felix-Thomas-Vigue,Felix-Thomas-Vigue-2}).  
 
It should also be possible to describe these operations directly in terms of
holomorphic curves. Such a construction is sketched by Seidel in~\cite{Seidel-biased}. 

\smallskip

\noindent {\it Relation to Hochschild and cyclic homology.} 
Paul Seidel has conjectured in~\cite{Se} that, 
given an exact Lefschetz fibration $E\to D$ over the disc, the 
symplectic homology of $E$ is isomorphic to the Hochschild homology of 
a certain $A_\infty$-category $\cC$ built from the vanishing cycles of 
$E$: 
$$ 
SH_*(E) \simeq HH_*(\cC). 
$$ 
This conjecture has been proved by Ganatra and Maydanskiy in~\cite{Gan-May} as a consequence of the Legendrian handle attaching exact triangle of Bourgeois, Ekholm and Eliashberg~\cite{BEE}. It is implicit in~\cite{Se} that there is an equivariant version of 
this conjectural isomorphism, namely that the $S^1$-equivariant symplectic 
homology of $E$ is isomorphic to the cyclic homology of $\cC$: 
$$ 
SH_*^{S^1}(E)\simeq HC_*(\cC).  
$$ 
On the other hand, Hochschild and cyclic homology are related by the 
Connes exact sequence  
\begin{equation} \label{eq:Connes} 
 \ldots \to HH_k(\cC) \to HC_k(\cC) \stackrel D \to HC_{k-2}(\cC) \to 
HH_{k-1}(\cC) \to \ldots  
\end{equation} 
We conjecture that the two previous isomorphisms are such that the 
Gysin exact sequence~\eqref{eq:SGysin} and the Connes exact 
sequence~\eqref{eq:Connes} are isomorphic. This fits with the general 
philosophy that the Gysin exact sequence for $S^1$-equivariant 
homology of certain topological spaces is isomorphic to 
the Connes exact sequence of suitable algebras (a good reference is 
Loday's book~\cite{Lo}, in particular~\cite[Theorem~7.2.3]{Lo}).  

\smallskip

\noindent {\it Relation to Givental's point of view.} Given a closed symplectic manifold $X$, Givental defined in~\cite{Gi} a $D$-module structure on 
$H^*(X;\C)\otimes \Lambda_{Nov}\otimes \C[\hbar]$, where $\Lambda_{Nov}$ is a suitable Novikov ring and $\hbar$ is the generator of $H^*(BS^1)$. He interprets this 
as being the $S^1$-equivariant Floer cohomology of $X$. Our construction of $S^1$-equivariant Floer homology provides an interpretation of the underlying homology group as the homology of a Floer-type complex. We expect that the $D$-module structure can also be defined within our setup. 

\medskip
 
 \noindent {\sc Structure of the paper.} In~\S\ref{sec:symplhom} we briefly recall the construction of symplectic homology. In~\S\ref{sec:param} 
 we introduce a new variant of it, which we call ``parametrized symplectic homology''. It corresponds to families of Hamiltonians, indexed by a finite dimensional parameter space. 
 Section~\ref{sec:S1} is devoted to the $S^1$-equivariant theory. We recall in~\S\ref{sec:S1equivhom} the Borel construction and its interpretation in Morse homology. We define $S^1$-equivariant symplectic homology in~\S\ref{sec:S1equivsymplhom}, following Viterbo~\cite[\S5]{V}. We prove Theorems~\ref{thm:SGysin} and~\ref{thm:grid} in~\S\ref{sec:MBparam}, using a Morse-Bott construction and a spectral sequence argument. In~\S\ref{sec:continuation} we use similar techniques to study continuation maps. 
 
 \medskip 
 
 \noindent {\sc Acknowledgements.} F.B. was partially supported by the Fonds National de la Recherche Scientifique (Belgium) and by ERC Starting Grant StG-239781-ContactMath. A.O. was partially supported by ERC Starting Grant StG-259118-Stein. Both authors were partially
supported by ANR project ``Floer Power'' ANR-08-BLAN-0291-03 (France) as well as by the Minist\`ere Belge
des Affaires \'etrang\`eres and the Minist\`ere
Fran\c{c}ais des Affaires \'etrang\`eres et europ\'eennes through the
programme PHC--Tournesol Fran\c{c}ais. The present work is part of the authors activities within CAST, a Research ?Network Program of the European Science Foundation.


\section{Symplectic homology} \label{sec:symplhom} 
 
We briefly recall in this section the definition of symplectic 
homology, and we refer to~\cite{BOauto} for full details. In the sequel 
$(W,\om)$ denotes a compact symplectic manifold with  
contact type boundary $M:=\p W$. This means that there exists a vector 
field $X$ defined in a neighbourhood of $M$, transverse and pointing 
outwards along $M$, and such that 
$$ 
\cL _X \om = \om. 
$$ 
Such an $X$ is called a {\bf Liouville vector field}. The $1$-form 
$\alpha:=(\iota_X\om)|_M$ is a contact form on $M$ and is called the 
{\bf Liouville \boldmath$1$-form}. We denote by 
$\xi:=\ker \alpha$ the contact structure defined by $\alpha$, and 
we note that the isotopy class of $\xi$ is uniquely determined by 
$\om$. The {\bf Reeb vector field} $R_\alpha$ is defined by 
the conditions $\ker \, \om|_M = \langle R_\alpha \rangle$ and 
$\alpha(R_\alpha)=1$. We denote by $\phi_\alpha$ the flow of 
$R_\alpha$. The {\bf action spectrum} of $(M,\alpha)$ is 
defined by 
$$ 
\textrm{Spec}(M,\alpha) := \{ T \in \R^+\, | \, \textrm{ there is a 
   closed } R_\alpha\textrm{-orbit of period } T\}. 
$$ 
 
Let $\phi$ be the flow of $X$. We parametrize a neighbourhood $U$ of $M$ by 
$$ 
G: M \times [-\delta, 0] \to U, \qquad  
(y,t) \mapsto \phi^t(y). 
$$ 
Then $d(e^t\alpha)$ is a symplectic form on $M\times \R^+$ and 
$G$ satisfies $G^*\om = d(e^t \alpha)$. 
We denote by 
$$ 
\widehat W : = W \ \bigcup _{G} \ M\times \R^+ 
$$ 
the {\bf symplectic completion of \boldmath$W$} and endow it with the 
symplectic form  
$$ 
\widehat \om : = 
\left\{\begin{array}{ll} 
\om & \textrm{ on } W, \\ 
d(e^t \alpha) & \textrm{ on } M\times \R^+. 
\end{array} \right. 
$$ 
 
Given a time-dependent Hamiltonian $H :S^1\times \widehat W \to \R$ 
we define the 
{\bf Hamiltonian vector field} $X^\theta_H$ by 
$$ 
\widehat \om (X^\theta_H,\cdot) = d H_\theta, \qquad \theta\in S^1 = \R/\Z, 
$$ 
where $H_\theta:=H(\theta,\cdot)$. We denote by $\phi_H$ the flow of 
$X_H^\theta$, defined by $\phi_H^0=\textrm{Id}$ and 
$$ 
  \frac d {d\theta} \phi_H^\theta (x) = X^\theta_H(\phi_H^\theta(x)),  
\qquad \theta\in \R. 
$$ 
We denote by $\cP(H)$ the set of $1$-periodic orbits of $X^\theta_H$, 
and we denote by $\cP^a(H)\subset \cP(H)$ the set of $1$-periodic 
orbits in the free homotopy class $a$.  
 
We define the class $\cH$ of {\bf admissible Hamiltonians} to consist of 
smooth functions $H:S^1\times \widehat W\to \R$ satisfying the following 
conditions: 
\begin{itemize}  
\item $H<0$ on $W$; 
\item 
there exists $t_0\ge 0$ such that $H(\theta,y,t)=\beta e^t 
  +\beta'$ for $t\ge t_0$, with $0<\beta\notin 
  \mathrm{Spec}(M,\alpha)$ and $\beta'\in\R$. 
\end{itemize}  
 
We denote by $\cH_{\textrm{reg}}\subset \cH$ the dense set of Hamiltonians $H$ 
  such that all elements of $\cP(H)$ are nondegenerate, i.e. the Poincar\'e 
  return map has no eigenvalues equal to $1$.  
Let $a$ be a free homotopy class of loops in $W$. 
The {\bf symplectic homology groups} of $(W,\om)$ are defined by 
\begin{equation*} 
   SH_*^a(W,\om) := \lim _{\substack{ \longrightarrow \\ H\in \cH_{\textrm{reg}}} 
    } SH_*^a(H,J). 
\end{equation*} 
Here $J$ is an almost complex structure on $\widehat W$ which is 
compatible with $\widehat \om$, convex and invariant under translation 
in the $t$-variable outside a compact set, and regular for 
$H$ (in particular one must allow $J$ to depend on $\theta$).  
We denote by $SH_*^a(H,J)$ the Floer homology groups of the pair 
$(H,J)$ in the free homotopy class $a$ and with coefficients in the 
Novikov ring $\Lambda_\om$. We assume throughout this paper that $W$ 
satisfies condition~\eqref{eq:asph}, so that the energy of a Floer  
trajectory does not depend on its homology class, but only on its  
endpoints. We refer to \cite{BOauto} for the details of the construction 
and in particular for the definition of the coefficient ring 
$\Lambda_\om$. Throughout this paper the Novikov ring is understood to 
be defined over $\Z$.  
 
For the trivial homotopy class $a=0$ we denote 
the symplectic homology groups by $SH_*(W,\om)$. The {\bf 
reduced Hamiltonian action functional} is  
$$ 
\cA_H^0 : C^\infty_{\textrm{contr}}(S^1,\widehat W) \to \R, 
$$ 
$$ 
\cA_H^0(\gamma) := -\int_{D^2} \sigma^*\widehat \om - \int_{S^1} 
H(\theta,\gamma(\theta)) \, d\theta. 
$$ 
Here $C^\infty_{\textrm{contr}}(S^1,\widehat W)$ denotes the space of 
smooth contractible loops in $\widehat W$ and $\sigma:D^2\to \widehat 
W$ is a smooth extension of $\gamma$. Note that $\cA_H^0$ is 
well-defined thanks to condition~(\ref{eq:asph}) and  
is decreasing along Floer trajectories.  
 
We now consider a special cofinal class of Hamiltonians $\cH'\subset 
\cH$, consisting of elements $H\in \cH'$ which satisfy the following 
conditions:  
\begin{itemize} 
\item 
there exists $t_0\ge 0$ such that $H(\theta,y,t)=\beta e^t 
  +\beta'$ for $t\ge t_0$, with $0<\beta\notin 
  \mathrm{Spec}(M,\alpha)$ and $\beta'\in\R$; 
\item $H<0$ and $C^2$-small on $W$; 
\item 
$H(\theta,y,t)$ is $C^2$-close to an increasing function of $t$ 
  on $S^1\times M \times [0,t_0]$. 
\end{itemize} 
The last condition implies that, in the region $M\times [0,t_0]$, each 
$1$-periodic orbit of $H$ is $C^1$-close to a closed characteristic on 
some level $M\times\{t\}$. 
 
Given $H\in\cH'_\reg:=\cH_\reg\cap \cH'$, a regular almost complex 
structure $J$, and a choice of $\epsilon>0$ small enough, we define 
the chain complexes   
\begin{equation} \label{eq:SC-} 
SC_*^-(H,J) := \bigoplus _{\substack{ \gamma \in 
     \cP^0(H) \\ \cA_H^0(\gamma) \le \epsilon }} \Lambda_\om 
\langle \gamma \rangle \ \subset SC_*(H,J) 
\end{equation} 
and 
$$ 
SC_*^+(H,J) := SC_*(H,J) / SC_*^-(H,J). 
$$ 
The differential on $SC_*^\pm(H,J)$ is induced by $\p$. The groups 
$$ 
SH_*^\pm(H,J) := H_*(SC_*^\pm(H),\p) 
$$ 
do not depend on $J$, nor on $\epsilon$, and we define 
$$ 
SH_*^\pm(W,\om) := \lim _{\substack{ \to \\ H\in \cH'_{\textrm{reg}}}} SH_*^\pm(H). 
$$ 
We call $SH_*^+(W,\om)$ the {\bf positive symplectic homology group} 
of $(W,\om)$. 
   
\begin{remark} {\rm 
Condition~\eqref{eq:asph} can be  
replaced in the case of contractible orbits by the weaker {\bf 
symplectic asphericity} condition $\langle \om,\pi_2(W)\rangle =0$.  
} 
\end{remark} 
 
Let us assume now that $W$ has {\bf positive contact 
   type} boundary~\cite[\S5.4]{O1}. This means that every positively oriented 
closed characteristic $\gamma$ on $M$ which is contractible in $W$ has 
positive action $\cA_\om(\gamma)$ bounded away from zero, where 
$$ 
\cA_\om(\gamma) := \int_{D^2} \sigma^*\om 
$$ 
for some extension $\sigma:D^2\to W$ of $\gamma$. This condition is 
automatically satisfied if the boundary $M$ is of restricted contact 
type, i.e. the vector field $X$ is globally defined on $W$. Under the 
positive contact type assumption we have~\cite{V} 
$$ 
SH_*^-(W,\om) = H_{*+n}(W,\p W;\Lambda_\om), \qquad n=\frac 1 2 \dim 
\, W, 
$$ 
and the short exact sequence of complexes $SC_*^-(H) \to SC_*(H) 
\to SC_*^+(H)$ induces the tautological long exact 
sequence~\eqref{eq:taut}.


\section{Parametrized symplectic homology} \label{sec:param} 
 
We introduce in this section a new variant of Floer 
homology, which we call ``parametrized Floer homology''.  
In the sequel $\Lambda$ is a finite dimensional closed 
manifold of dimension $m$, which we call ``parameter space''. The 
elements of $\Lambda$ are denoted by $\lambda$. When the 
parameter space is $S^{2N+1}$, the parametrized symplectic homology 
groups will be the abutment of the spectral sequence which gives rise 
to the Gysin exact sequence~\eqref{eq:SGysin}.

\subsection{The parametrized Floer equation} \label{sec:action} 
For each free homotopy class $a$ in $W$, we fix a reference loop 
$l_a:S^1\to \widehat W$ such that $[l_a]=a$. If $a$ is the trivial 
homotopy class, we choose $l_a$ to be a constant loop.  
Recall that free homotopy classes of loops in $\widehat W$ are in 
one-to-one correspondence with conjugacy classes in $\pi_1(\widehat 
W)$. As a consequence, the inverse $a^{-1}$ of a free homotopy class 
is well-defined. We require that $l_{a^{-1}}$ coincides with the loop 
$l_a$ with the opposite orientation. 
 
We define the set $\cH_\Lambda$ of 
{\bf admissible Hamiltonian families} to consist of elements  
$H\in C^\infty(S^1\times \widehat W\times \Lambda,\R)$ which satisfy 
the following conditions: 
\begin{itemize}  
\item $H<0$ on $S^1\times W\times \Lambda$;  
\item 
there exists $t_0\ge 0$ such that $H(\theta,y,t,\lambda)=\beta 
  e^t +\beta'(\lambda)$ for $t\ge t_0$, with $0<\beta\notin 
  \mathrm{Spec}(M,\alpha)$ and $\beta'\in C^\infty(\Lambda,\R)$.
\end{itemize}  
 
Let $H:S^1 \times \widehat W \times \Lambda \to 
\R$ be an admissible Hamiltonian family denoted by 
$H(\theta,x,\lambda)=H_\lambda(\theta,x)$. This defines a family of  
action functionals  
$$ 
\cA : C^\infty(S^1,\widehat W)\times \Lambda \to \R, 
$$ 
$$ 
\cA(\gamma,\lambda) = \cA_\lambda(\gamma) := -\int_{[0,1]\times S^1} \sigma^* 
\om - \int_{S^1} H_\lambda(\theta,\gamma(\theta)) d\theta,  
$$ 
where $\sigma:[0,1]\times S^1\to \widehat W$ is a smooth homotopy from 
$l_{[\gamma]}$ to $\gamma$. The functional $\cA$ is well-defined due 
to our standing assumption~\eqref{eq:asph}.  
 
The differential of $\cA$ is given by  
\begin{equation} \label{eq:dA} 
d\cA(\gamma,\lambda) \cdot (\zeta,\ell)= 
\int_{S^1}\om(\dot\gamma(\theta)-X_{H_\lambda}(\gamma(\theta)),\zeta(\theta)) 
d\theta 
- 
\int_{S^1} \frac {\p H} {\p \lambda} (\theta,\gamma(\theta),\lambda) d\theta 
\cdot \ell 
\end{equation}  
and therefore $(\gamma,\lambda)$ is a critical point of $\cA$ if and 
only if  
\begin{equation} \label{eq:periodicpar} 
 \gamma\in\cP(H_\lambda) \quad \mbox{and} \quad  
 \int_{S^1} \frac {\p H} {\p \lambda} (\theta,\gamma(\theta),\lambda)\, 
d\theta =0.  
\end{equation}  
We denote by $\cP(H)$ the set of critical points of $\cA$ consisting 
of pairs $(\gamma,\lambda)$ satisfying~\eqref{eq:periodicpar}. We 
denote by $\cP^a(H)$ the set of pairs $(\gamma,\lambda)\in\cP(H)$ such 
that $\gamma$ lies in the free homotopy class $a$.  
 
\begin{remark} {\rm  
 Equation~\eqref{eq:periodicpar} can be interpreted as follows. Every 
loop $\gamma:S^1\to\widehat W$ determines a function  
\begin{equation} \label{eq:Fgamma} 
F_\gamma:\Lambda \to \R, \qquad \lambda \mapsto \int_{S^1} 
H(\theta,\gamma(\theta),\lambda) \, d\theta.  
\end{equation}  
A pair $(\gamma,\lambda)$ belongs therefore to $\cP(H)$ if and only if 
$$ 
\gamma\in \cP(H_\lambda) \quad \mbox{ and } \quad \lambda\in 
\textrm{Crit}(F_\gamma). 
$$  
} 
\end{remark}  
 
Let $J=(J_\lambda^\theta)$, $\lambda\in\Lambda$, $\theta\in S^1$ be a 
family of $\theta$-dependent compatible  
almost complex structures on $\widehat W$ 
which, at infinity, are invariant under translations in the 
$t$-variable and satisfy the relations  
\begin{equation} \label{eq:standardJ} 
J_\lambda^\theta \xi=\xi, \qquad J_\lambda^\theta (\frac \partial 
{\partial t}) =R_\alpha. 
\end{equation} 
Such an {\bf admissible family of almost complex 
  structures} $J$ induces a  
  family of $L^2$-metrics on the space $C^\infty(S^1,\widehat W)$, parametrized 
  by $\Lambda$ and defined by   
$$ 
\langle \zeta,\eta\rangle_\lambda := \int_{S^1} 
\om(\zeta(\theta),J_\lambda^\theta\eta(\theta)) d\theta, \quad \zeta,\eta\in 
T_\gamma C^\infty(S^1,\widehat 
W)=\Gamma(\gamma^*T\widehat W). 
$$ 
Such a metric can be coupled with any metric $g$ on $\Lambda$ and 
gives rise to a metric on  
$C^\infty(S^1,\widehat W)\times \Lambda$ acting  
at a point $(\gamma,\lambda)$ by  
$$ 
\langle(\zeta,\ell), (\eta,k)\rangle_{J,g}:= \langle \zeta,\eta\rangle_\lambda + 
g(\ell,k), \qquad (\zeta,\ell),(\eta,k)\in \Gamma(\gamma^*T\widehat 
W)\oplus T_\lambda\Lambda.  
$$ 
We denote by $\cJ_\Lambda$ the set of pairs $(J,g)$ consisting of an 
admissible almost complex structure $J$ on $\widehat W$ and of a 
Riemannian metric $g$ on $\Lambda$.  
 
The {\bf parametrized Floer equation} is the gradient equation for 
$\cA$ with respect to such a metric 
$\langle\cdot,\cdot\rangle_{J,g}$. More precisely, given   
$\op:=(\og,\olambda),\up:=(\ug,\ulambda)\in \cP(H)$  
we denote by  
$$ 
\widehat \cM(\op,\up;H,J,g) 
$$ 
the {\bf space of parametrized Floer trajectories}, consisting of 
pairs $(u,\lambda)$ with  
$$ 
u:\R\times S^1 \to \widehat W, \qquad \lambda:\R\to \Lambda,  
$$ 
satisfying  
\begin{eqnarray}  
\label{eq:Floer1par} 
 \p_s u + J_{\lambda(s)}^\theta (\p_\theta u - 
X_{H_{\lambda(s)}}^\theta (u)) & = & 0, \\  
\label{eq:Floer2par} 
 \dot \lambda (s) - \int_{S^1} \vec \nabla_\lambda 
H(\theta,u(s,\theta),\lambda(s)) d\theta & = & 0,   
\end{eqnarray} 
and  
\begin{equation} \label{eq:asymptoticpar} 
 \lim_{s\to -\infty} (u(s,\cdot),\lambda(s)) = (\og,\olambda), \quad  
 \lim_{s\to +\infty} (u(s,\cdot),\lambda(s)) = (\ug,\ulambda). 
\end{equation} 
Here and in the sequel we use the notation $\vec \nabla$ for a 
gradient vector field, whereas $\nabla$ will denote a 
covariant derivative.  
 
\begin{remark}{\rm Equation~\eqref{eq:Floer2par} is equivalent to  
\begin{equation} \label{eq:Floer2parbis} 
 \dot \lambda(s) - \vec \nabla F_{u(s,\cdot)}(\lambda(s))=0, 
\end{equation}  
where $F_{u(s,\cdot)}$ is defined by~\eqref{eq:Fgamma}. Thus, the 
parametrized Floer equation is a system involving a Floer equation and 
a finite-dimensional gradient equation.  
} 
\end{remark}  
 
The additive group $\R$ acts on $\widehat \cM(\op,\up;H,J,g)$ 
by reparametrization in the $s$-variable and we denote by  
$$ 
\cM(\op,\up;H,J,g) := \widehat \cM(\op,\up;H,J,g)/\R 
$$ 
the {\bf moduli space of parametrized Floer trajectories}.

Let us fix $p\ge 2$. The linearization of the 
equations~(\ref{eq:Floer1par}-\ref{eq:Floer2par}) gives rise to the  
operator  
\begin{equation} \label{eq:Dulambda}
D_{(u,\lambda)} : W^{1,p}(u^*T\widehat W) \oplus 
W^{1,p}(\lambda^*T\Lambda) \to  
L^p(u^*T\widehat W) \oplus L^p(\lambda^*T\Lambda), 
\end{equation}
$$ 
D_{(u,\lambda)} (\zeta,\ell) :=  
\left(\begin{array}{c}  
D_u\zeta + (D_\lambda J\cdot \ell)(\p_\theta u - X_{H_\lambda}(u)) - 
J_\lambda (D_\lambda X_{H_\lambda}\cdot \ell) \\ 
\nabla_s \ell - \nabla_\ell \int_{S^1} \vec \nabla_\lambda H 
(\theta,u,\lambda) d\theta  
- \int_{S^1} \nabla_\zeta \vec \nabla_\lambda H(\theta,u,\lambda) d\theta 
\end{array}\right), 
$$ 
where 
$$ 
D_u : W^{1,p}(u^*T\widehat W) \to L^p(u^*T\widehat W) 
$$ 
is the usual Floer operator given by  
$$ 
D_u\zeta := \nabla_s \zeta + J_\lambda \nabla_\theta \zeta - 
J_\lambda \nabla_\zeta X_{H_\lambda} + \nabla_\zeta J_\lambda (\p_\theta u - 
X_{H_\lambda}). 
$$ 
 
The Hessian of $\cA$ at a critical point $p=(\gamma,\lambda)$ is given 
by the formula  
\begin{eqnarray} \label{eq:d2A} 
\lefteqn{d^2\cA(\gamma,\lambda)\big((\zeta,\ell),(\eta,k)\big)} \\ 
&=& \int_{S^1} \omega(\nabla_\theta\eta-\nabla_\eta 
X_{H_\lambda},\zeta) d\theta - \int_{S^1}\eta(\frac {\partial H} 
{\partial \lambda}\cdot \ell) d\theta \nonumber \\ 
&& - \ \int_{S^1} k(dH_\lambda\cdot \zeta) d\theta - \int_{S^1} \frac 
{\partial^2 H} {\partial \lambda^2}(\ell,k) d\theta \nonumber \\ 
&=& d^2\cA_{H_\lambda}(\gamma)(\zeta,\eta) - \int_{S^1}\eta(\frac {\partial H} 
{\partial \lambda}\cdot \ell) d\theta  
- \int_{S^1} k(dH_\lambda\cdot \zeta) d\theta  
- d^2 F_\gamma(\lambda)(\ell,k). \nonumber 
\end{eqnarray}  
 
We define the asymptotic operator at a critical point 
$(\gamma,\lambda)$ by 
$$ 
D_{(\gamma,\lambda)} : H^1(S^1,\gamma^*T\widehat W) \times T_\lambda 
\Lambda \to L^2(S^1,\gamma^*T\widehat W) \times T_\lambda 
\Lambda, 
$$ 
\begin{equation}  \label{eq:Dasy} 
D_{(\gamma,\lambda)}(\zeta,\ell) = \left(\begin{array}{c}  
J_\lambda(\nabla_\theta \zeta - \nabla_\zeta X_{H_\lambda} - 
(D_\lambda X_{H_\lambda})\cdot \ell) \\  
-\int_{S^1} \nabla_\zeta \frac {\partial H} {\partial \lambda} d\theta 
- \int_{S^1} \nabla_\ell \frac {\partial H} {\partial \lambda} d\theta 
\end{array}\right). 
\end{equation} 
Note that $D_{(\gamma,\lambda)}$ is obtained from $D_{(u,\lambda)}$ 
for $(u(s,\theta),\lambda(s))\equiv (\gamma(\theta),\lambda)$ and 
$(\zeta(s,\theta),\ell(s)) \equiv (\zeta(\theta),\ell)$.  
 
We say that a critical point $(\gamma,\lambda)$ is {\bf nondegenerate} if 
the Hessian $d^2\cA(\gamma,\lambda)$ has trivial kernel. In~\cite[Lemma~2.3]{BOtrans} we proved that this condition is equivalent to the injectivity of the asymptotic  
operator $D_{(\gamma,\lambda)}$. Since the latter is self-adjoint, this condition is also equivalent to 
its surjectivity.  
 
\begin{remark} 
  We note that nondegeneracy of a critical point $(\gamma,\lambda)$ 
  does not imply that $\gamma$ is a nondegenerate orbit of 
  $H_\lambda$, nor that $\lambda$ is a nondegenerate critical point of 
  $F_\gamma$. This situation is already present in Morse theory, as 
  the following example shows. We consider the Morse function 
  $f:\R\times \R\to\R$, $(x,y)\mapsto xy$. Then $(x_0,y_0)=(0,0)$ is a 
  nondegenerate critical point, but the restrictions of $f$ to 
  $\R\times \{0\}$ and $\{0\}\times \R$ are constant, hence $x_0=0$ 
  and $y_0=0$ are degenerate critical points.  
\end{remark}

An admissible Hamiltonian family $H$ is called {\bf nondegenerate} if $\cP(H)$ 
consists of nondegenerate elements.   
We denote the set of nondegenerate and admissible Hamiltonian families by 
$\cH_{\Lambda,\textrm{reg}}\subset \cH_\Lambda$. By~\cite[Proposition~2.4]{BOtrans}, the 
set $\cH_{\Lambda,\textrm{reg}}$ is of the second Baire category in 
$\cH_\Lambda$. Moreover, if $H\in\cH_{\Lambda,\textrm{reg}}$ the set 
$\cP(H)$ is discrete.   
 
We denote 
\begin{eqnarray*} 
\cW^{1,p} & := & W^{1,p}(\R\times S^1,u^*T\widehat W) \oplus 
W^{1,p}(\R,\lambda^*T\Lambda), \\  
\cL^p & := & L^p(\R\times S^1,u^*T\widehat W) \oplus 
L^p(\R,\lambda^*T\Lambda).
\end{eqnarray*} 
Let $(\og,\olambda),(\ug,\ulambda)\in\cP(H)$ be nondegenerate. We proved in~\cite[Theorem~2.5]{BOtrans} that, given any $(u,\lambda)\in 
\widehat \cM((\og,\olambda),(\ug,\ulambda); H,J,g)$, the operator 
$$ 
D_{(u,\lambda)} : \cW^{1,p}\to \cL^p 
$$  
is Fredholm for $1<p<\infty$.

\begin{remark} We can choose a unitary trivialization of 
$u^*T\widehat W$  
and a trivialization of $\lambda^*T\Lambda$ in which $D_{(u,\lambda)}$ has 
the form   
\begin{equation} \label{eq:Dtriv} 
D_{(u,\lambda)}\left(\begin{array}{c} 
\zeta \\ \ell  
\end{array}\right) 
:= 
\Bigg[\left(\begin{array}{cc} 
\partial_s +J_0\partial_\theta & 0 \\ 0 & \frac d {ds}  
\end{array}\right)  
+ N 
\Bigg] 
\left(\begin{array}{c} 
\zeta \\ \ell  
\end{array}\right),  
\end{equation} 
with $N:\R\times S^1\to \mathrm{Mat}_{2n+m}(\R)$ being pointwise 
bounded and $\lim_{s\to\pm\infty}N(s,\theta)$ being
symmetric. To obtain such a trivialization we just need to pick a unitary trivialization of $u^*T\widehat W$. Pointwise boundedness then follows from the fact that the trajectory converges at $\pm\infty$, whereas symmetry follows from the fact that the asymptotic operator at a critical point is self-adjoint.
\end{remark}

Let $H\in\cH_{\Lambda,\mathrm{reg}}$. A pair $(J,g)\in\cJ_\Lambda$ is 
  called {\bf regular for \boldmath$H$} if the operator $D_{(u,\lambda)}$ is 
  surjective for any solution $(u,\lambda)$ 
  of~(\ref{eq:Floer1par}-\ref{eq:asymptoticpar}). We denote the space 
  of such pairs by $\cJ_{\Lambda,\mathrm{reg}}(H)$. We proved in~\cite[Theorem~4.1]{BOtrans} that there exists a subset of second Baire category
$\cH\cJ_{\Lambda,\mathrm{reg}}\subset 
\cH_{\Lambda,\mathrm{reg}}\times \cJ_\Lambda$ such that $H\in\cH_{\Lambda,\mathrm{reg}}$ and $(J,g)\in
\cJ_{\Lambda,\mathrm{reg}}(H)$ whenever $(H,J,g)\in
\cH\cJ_{\Lambda,\mathrm{reg}}$. 

As a consequence, whenever $(H,J,g)\in\cH\cJ_{\Lambda,\mathrm{reg}}$ we infer that the moduli spaces of parametrized Floer trajectories 
$\cM(\op,\up;H,J,g)$ are smooth manifolds, for all $\op,\up\in\cP(H)$. The local dimension at $(u,\lambda)\in \cM(\op,\up;H,J,g)$ is equal to $\mathrm{ind}\,D_{(u,\lambda)}-1$. 

Recall that, for each free homotopy class $a$ in $\widehat W$, we have chosen 
in Section~\ref{sec:action} a reference loop $l_a$ such that 
$[l_a]=a$. We now choose a symplectic trivialization 
$$ 
\Phi^1_a:S^1\times \R^{2n} \to l_a^*T\widehat W 
$$ 
for each free homotopy class $a$. If $a$ is the trivial homotopy 
class we choose the trivialization to be constant.   
 
For each $p=(\gamma,\lambda)\in\cP(H)$ we choose a smooth 
homotopy $\sigma_p:[0,1]\times S^1\to\widehat W$ such that 
$\sigma_p(0,\cdot)=l_{[\gamma]}$ and $\sigma_p(1,\cdot)=\gamma$.  
This gives rise to a unique (up to homotopy) symplectic trivialization 
$$ 
\Phi^1_p: [0,1]\times S^1\times \R^{2n} \to \sigma^*_pT\widehat W 
$$ 
such that $\Phi^1_p=\Phi^1_{[\gamma]}$ on $\{0\} \times S^1 \times 
\R^{2n}$. Moreover, we fix an isometry $\Phi^2_p:\R^m\to 
T_\lambda\Lambda$.   

Let $T^*\Lambda$ be the cotangent bundle, denote points in $T^*\Lambda$ by $(\lambda,\eta)$ with $\eta\in T^*_\lambda\Lambda$, and endow $T^*\Lambda$ with the symplectic form $d\lambda\wedge d\eta$. Following~\cite{BOparam} we define a Hamiltonian $\widetilde H:S^1\times \widehat W\times T^*\Lambda\to \R$ by 
$\widetilde H(\theta,x,(\lambda,\eta)):=H(\theta,x,\lambda)$. Then $X_{\widetilde H}=X_H - \frac {\p H} {\p \lambda} \frac \p {\p \eta}$ and periodic orbits of $X_{\widetilde H}$ project onto critical points of $\cA$ in $\widehat W \times \Lambda$. 
Together, $\Phi^1_p$ and $\Phi^2_p$ induce a symplectic trivialization
of $T(\widehat W \times T^*\Lambda)$ along the closed Hamiltonian orbits 
of $\widetilde H$ in $\widehat W \times T^*\Lambda$ which project to $p$.

In~\cite{BOparam}, we defined the {\bf parametrized 
  Robbin-Salamon index} $\mu(p)$ of $p$ with respect to the given 
trivialization as the Robbin-Salamon index~\cite{RS} of the linearized 
Hamiltonian flow of $\widetilde H$ along a closed orbit over $p$.
The main result from~\cite{BOparam} can be phrased in terms of our parametrized
 Floer equations as:
 
\begin{theorem} \label{thm:index}  
Assume $(\og,\olambda),(\ug,\ulambda)\in\cP(H)$ are 
  nondegenerate and fix $1<p<\infty$. For any $(u,\lambda)\in 
\widehat \cM((\og,\olambda),(\ug,\ulambda); H,J,g)$ the index of the 
Fredholm operator $D_{(u,\lambda)}:\cW^{1,p}\to\cL^p$ is  
$$ 
\ind \, D_{(u,\lambda)} = -\mu(\og, \olambda) + \mu(\ug, \ulambda) .  
$$ 
\end{theorem}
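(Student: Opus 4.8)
The statement is an index formula for the parametrized Floer operator, and the natural strategy is to reduce it to the corresponding statement about the Robbin-Salamon index of the linearized Hamiltonian flow of $\widetilde H$ on $\widehat W\times T^*\Lambda$, which is exactly the content of the main result of~\cite{BOparam}. The plan is to first recall that a solution $(u,\lambda)$ of the parametrized Floer equations~(\ref{eq:Floer1par}-\ref{eq:asymptoticpar}) lifts to a Floer trajectory $\widetilde u$ for the Hamiltonian $\widetilde H$ on the enlarged manifold $\widehat W\times T^*\Lambda$, where the $T^*\Lambda$-component records $\lambda(s)$ in the base direction and an auxiliary covector in the fiber direction determined by the equation. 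One checks that the linearization $D_{\widetilde u}$ of the ordinary Floer operator for $\widetilde H$ along $\widetilde u$, written in the trivialization induced by $\Phi^1_p$ and $\Phi^2_p$ at the asymptotic ends, agrees up to a compact (zeroth-order) perturbation with $D_{(u,\lambda)}$ in the trivialization~\eqref{eq:Dtriv}; in particular the two operators have the same Fredholm index.

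Second, I would invoke the spectral-flow (or Robbin-Salamon) formula for the index of a Fredholm operator of the form~\eqref{eq:Dtriv}: for an operator $\partial_s + A(s)$ with $A(s)$ a path of (in our case, after the symplectic-trivialization rewriting, Hamiltonian-type) operators interpolating between the invertible asymptotic operators $A_{\pm\infty}$ at $s=\pm\infty$, the Fredholm index equals the spectral flow of the path, which by the Robbin-Salamon correspondence equals $\mu(\ug,\ulambda)-\mu(\og,\olambda)$. This is precisely where the definition of the parametrized Robbin-Salamon index $\mu(p)$ as the Robbin-Salamon index of the linearized flow of $\widetilde H$ over $p$ enters: by construction of $\mu(p)$ in~\cite{BOparam}, the asymptotic operators of $D_{\widetilde u}$ at the two ends have Robbin-Salamon / Conley-Zehnder-type data given exactly by $\mu(\og,\olambda)$ and $\mu(\ug,\ulambda)$, so the additivity of the spectral flow yields the claimed formula.

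Third, one must take care of the trivialization bookkeeping. The index $\mu(p)$ depends on the chosen trivializations $\Phi^1_p,\Phi^2_p$, and the formula only makes sense because the trivialization $\Phi^1_p$ restricts over $\{0\}\times S^1$ to the fixed reference trivialization $\Phi^1_{[\gamma]}$ of $l_{[\gamma]}^*T\widehat W$, while $\Phi^2_p$ is an isometry $\R^m\to T_\lambda\Lambda$; thus the ``gluing'' of the trivializations of the two orbits along the Floer cylinder $u$ is consistent (since $[\og]=[\ug]$, as both are connected by $u$) and the ambiguity in each $\mu$ cancels in the difference. I would spell out that the catenation of $\sigma_{\op}$, $u$ (and $\lambda$), and $\sigma_{\up}$ reversed gives a disk-like homotopy along which the two trivializations can be compared, and that condition~\eqref{eq:asph} guarantees the resulting index is independent of the choice of filling.

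\textbf{Main obstacle.} The technical heart is the reduction to~\cite{BOparam}: one must verify carefully that the lift $(u,\lambda)\mapsto\widetilde u$ intertwines the parametrized Floer operator with the ordinary Floer operator of $\widetilde H$ up to compact terms, and in particular that the asymptotic operators match under the induced trivialization of $T(\widehat W\times T^*\Lambda)$. The fiber direction of $T^*\Lambda$ is somewhat subtle here because $\widetilde H$ is fiberwise linear, so $X_{\widetilde H}$ has the form $X_H-\frac{\partial H}{\partial\lambda}\frac{\partial}{\partial\eta}$ and the $\eta$-component of the linearized flow is not autonomous along the orbit; one has to check that this contributes only a zeroth-order (hence compact, and index-irrelevant) term and does not affect the leading symbol or the asymptotic operators' spectral data. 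Once this matching is established, the index formula is an immediate consequence of the Robbin-Salamon index being additive under catenation and equal to the spectral flow, so I expect the remaining steps to be essentially bookkeeping.
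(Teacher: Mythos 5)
Your overall aim -- reduce the formula to the Robbin--Salamon index of the linearized flow of $\widetilde H$ on $\widehat W\times T^*\Lambda$ and then use ``index $=$ spectral flow'' -- points at the right circle of ideas, but the first step of your plan contains a genuine gap. A solution $(u,\lambda)$ of the parametrized equations~(\ref{eq:Floer1par})--(\ref{eq:asymptoticpar}) does \emph{not} lift to an honest Floer trajectory for $\widetilde H$ in any way that makes $D_{(u,\lambda)}$ a compact perturbation of the ordinary Floer operator $D_{\widetilde u}$: the two operators do not even act on the same function spaces. In the parametrized problem the $\Lambda$-component is a map $\lambda:\R\to\Lambda$ depending on $s$ only, and there is no fiber ($\eta$-) component at all, so the domain is $W^{1,p}(u^*T\widehat W)\oplus W^{1,p}(\lambda^*T\Lambda)$ as in~\eqref{eq:Dulambda}; the Floer operator for $\widetilde H$ would act on sections over the whole cylinder in $2n+2m$ directions. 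A ``zeroth-order, hence compact'' comparison is meaningless between operators with different domains. Worse, the $1$-periodic orbits of $X_{\widetilde H}$ lying over a critical point $p$ of $\cA$ form Morse--Bott families of dimension $m$ (the initial value $\eta(0)\in T^*_\lambda\Lambda$ is free), so the ordinary Floer operator for $\widetilde H$ has degenerate asymptotics, is not Fredholm on unweighted spaces, and after introducing weights or stabilizations its index differs from $\ind D_{(u,\lambda)}$ by correction terms your sketch does not account for. So the difficulty is not, as you suggest, that the $\eta$-part of the linearized flow is non-autonomous; it is that the $\eta$-directions and the $\theta$-dependence in the $\Lambda$-directions are absent from the parametrized operator, and bridging that discrepancy is precisely the content of the index theorem.

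For the record, the present paper does not prove this statement at all: Theorem~\ref{thm:index} is explicitly a rephrasing of the main result of~\cite{BOparam}, where $\widetilde H$ enters only through the \emph{definition} of the parametrized Robbin--Salamon index $\mu(p)$, namely as the Robbin--Salamon index of the linearized flow of $\widetilde H$ along a closed orbit over $p$. The proof there is a direct analysis of the mixed operator~\eqref{eq:Dtriv} (a Cauchy--Riemann operator in $2n$ directions coupled with an ODE operator in $m$ directions): one computes its Fredholm index as the spectral flow of the asymptotic operators~\eqref{eq:Dasy} and then identifies that spectral flow with the difference of the indices $\mu$ defined via $\widetilde H$; this identification between an $L^2(S^1,\R^{2n})\oplus\R^m$-spectral flow and a Robbin--Salamon index of paths in $\Sp(2n+2m)$ is the nontrivial step, and it is exactly what your proposed reduction to the standard Floer index theorem tries to bypass. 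Your remarks on trivializations and on condition~\eqref{eq:asph} are fine but peripheral; as written, the argument would need to be restructured around the mixed operator itself rather than around a lift to $\widehat W\times T^*\Lambda$.
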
  
 
In the above statement, it is understood that the trivialization used 
to define $\mu(\og,\olambda)$ is obtained from the trivialization used 
to define $\mu(\ug,\ulambda)$ by continuation along the map $u$. 

\subsection{The parametrized chain complex} 
Given $H\in\cH_{\Lambda,\mathrm{reg}}$, $(J,g)\in\Jreg(H)$, and a free 
homotopy class $a$ in $\widehat W$, we 
define $SC^{a,\Lambda}_*(H,J,g)$ as a chain complex whose underlying 
$\Lambda_\omega$-module is  
$$ 
SC^{a,\Lambda}_*(H,J,g):=\bigoplus _{p\in\cP^a(H)} 
\Lambda_\omega\langle p \rangle. 
$$ 
We define the degree of a generator $p\in\cP(H)$ in terms of the 
parametrized Robbin-Salamon index by 
$$ 
|p|:= -\mu(p)+\frac m 2\in \Z. 
$$ 
The fact that the grading is integral follows 
from~\cite[Theorem~4.7]{RS} using that the periodic orbits of $X_{\widetilde H}$ form Morse-Bott families of dimension $m=\dim\,\Lambda$, or from the integrality property stated in~\cite[Appendix~B]{BOparam}. 
We define $|p\, e^A|:=|p|-2\langle  
c_1(T\widehat W),A\rangle$, where $c_1(T\widehat W)$ is computed with 
respect to a compatible almost complex structure. 
 
Recall that, for each $p=(\gamma,\lambda)\in\cP(H)$, we have chosen a 
cylinder $\sigma_p:[0,1]\times S^1\to\widehat W$ such that 
$\sigma_p(0,\cdot)=l_{[\gamma]}$ and $\sigma_p(1,\cdot)=\gamma$. We 
define $\overline \sigma_p(s,\theta):=\sigma_p(1-s,\theta)$. Given 
$\op=(\og,\olambda),\up=(\ug,\ulambda)\in\cP(H)$ we define  
$$ 
\cM^A(\op,\up;H,J,g)\subset \cM(\op,\up;H,J,g) 
$$ 
to consist of trajectories $(u,\lambda)$ such that 
$[\sigma_{\op}\#u\#\overline \sigma_{\up}]=A\in H_2(\widehat 
W;\Z)$. It follows from Theorem~\ref{thm:index} that  
$$ 
\dim\, \cM^A(\op,\up;H,J,g) = |\op| - |\up\, e^A| -1. 
$$ 
 
Let $\op:=(\og,\olambda),\up:=(\ug,\ulambda)\in\cP(H)$. Whenever  
$|\op|-|\up \, e^A|=1$, one can associate to each 
element $(u,\lambda)\in \cM^A(\op,\up;H,J,g)$ a sign $\eps(u,\lambda)$ 
via the coherent orientations recipe of Floer and Hofer~\cite{FH}. As 
in their construction, since the asymptotics are fixed, the relevant 
spaces of Fredholm operators are contractible, and the corresponding 
determinant line bundles are trivial. Hence the moduli spaces of 
parametrized Floer trajectories are orientable. Since our moduli 
spaces are modeled on $\R$ as gradient trajectories, we can use the 
algorithm in~\cite{FH} to construct a set of orientations which is 
coherent with respect to the gluing operation. More precisely, one 
chooses an element $p\in\cP(H)$, and for each $p\neq \up\in\cP(H)$ one 
chooses arbitrary orientations of the spaces of operators 
$\cO(p,\up)$ asymptotic to $D_p$ at $-\infty$ and to $D_{\up}$ at 
$+\infty$. These determine orientations of $\cO(\up,p)$ by requiring 
that the glued orientation on $\cO(p,p)$ be the one determined by the 
canonical orientation of the constant operator $D_p$. We obtain 
orientations on $\cO(\op,\up)$ by requiring that the glued orientation 
with $\cO(p,\op)$ and $\cO(\up,p)$ be the canonical one on 
$\cO(p,p)$.  
 
We define a differential $\partial$ on $SC^{a,\Lambda}_*(H,J,g)$ by 
\begin{equation} \label{eq:diff-param}
\partial\op:=\sum_{|\op|-|\up \, e^A|=1} \Big( \sum_{(u,\lambda)\in 
  \cM^A(\op,\up;H,J,g)} \eps(u,\lambda)\Big) \ \up \, e^A. 
\end{equation} 
This expression is well-defined by standard compactness 
arguments~\cite{HS,S}. More precisely, for each $A\in H_2(W;\Z)$ 
satisfying $|\op|-|\up\, e^A|=1$ the 
set $\cM^A(\op,\up;H,J,g)$ is finite, and for each $c>0$ the number of 
$A\in H_2(W;\Z)$ such that $\omega(A)\le c$ and 
$\cM^A(\op,\up;H,J,g)\neq\emptyset$ is finite.  
 
It follows from standard compactness and gluing arguments~\cite{F,S} that 
$\partial^2=0$. Compactness is established in three steps. Firstly, one 
obtains a uniform $C^0$-bound on the $\widehat W$-component  
of parametrized Floer trajectories using the
maximum principle~\cite[Lemma~1.5]{O} 
and the fact that outside a compact set $H$ is independent of $\theta\in S^1$ and $y\in M$ and satisfies 
$\partial^2 H(t,\lambda(s))/\partial s \partial t=0$. 
Secondly, one proves 
that the $\Lambda$-component converges by applying the 
Arzel\'a-Ascoli theorem. Thirdly, the $\widehat W$-component converges 
by Floer-Gromov compactness because it satisfies an $s$-dependent 
Floer equation. Gluing involves exactly the same kind of estimates as 
in Floer theory.  
 
We denote the resulting homology groups by 
$SH^{a,\Lambda}_*(H,J,g)$. As for usual symplectic homology, we obtain by 
passing to the direct limit {\bf parametrized symplectic homology groups}  
$$ 
SH^{a,\Lambda}_*(W) := \lim _{\stackrel \longrightarrow 
  {H\in\cH_{\Lambda,\mathrm{reg}}}} SH^{a,\Lambda}_*(H,J,g).  
$$ 
 
\begin{proposition}[K\"unneth formula] \label{prop:Kunneth} 
 The following isomorphism holds with field coefficients 
\begin{equation}  
 SH^{a,\Lambda}_*(W) \simeq SH^a_*(W) \otimes H_*(\Lambda).  
\end{equation}  
\end{proposition}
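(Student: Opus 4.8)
The plan is to construct, for a suitable cofinal family of admissible Hamiltonian families, a product structure on the parametrized chain complex at the level of filtered complexes, and then identify the associated spectral sequence with the algebraic K\"unneth spectral sequence. Concretely, I would first choose a cofinal sequence of \emph{split} Hamiltonian families of the form $H(\theta,x,\lambda)=H^W(\theta,x)+H^\Lambda(\lambda)$, where $H^W\in\cH_\reg$ is a nondegenerate admissible Hamiltonian on $\widehat W$ and $H^\Lambda$ is a Morse function on $\Lambda$; a small generic perturbation within this class yields an element of $\cH_{\Lambda,\reg}$. For such $H$ the equations~\eqref{eq:periodicpar} decouple: $\cP^a(H)$ is in bijection with $\cP^a(H^W)\times\Crit(H^\Lambda)$, and by the product formula for the Robbin--Salamon index (together with Theorem~\ref{thm:index} and the definition $|p|=-\mu(p)+m/2$) one gets $|(\gamma,x)|=|\gamma|_{SC}+\ind_{H^\Lambda}(x)$. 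Thus as a graded $\Lambda_\omega$-module $SC^{a,\Lambda}_*(H,J,g)\cong SC^a_*(H^W,J^W)\otimes_{\Lambda_\omega} C_*(\Lambda;H^\Lambda)$.

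Next I would analyze the differential. For a split Hamiltonian and a product almost complex structure, and for the metric $g$ on $\Lambda$ chosen Morse--Smale for $H^\Lambda$, the parametrized Floer equations~\eqref{eq:Floer1par}--\eqref{eq:Floer2par} themselves decouple into a Floer equation in $\widehat W$ (with no $\lambda$-dependence) and a negative gradient flow line of $H^\Lambda$ on $\Lambda$; with a gluing/transversality argument (analogous to the standard proof that Morse homology of a product is the tensor product of Morse complexes) one shows that after a further small perturbation the moduli spaces $\cM^A(\op,\up;H,J,g)$ that contribute to $\partial$ are, up to the $\R$-action, fibered products of a $\widehat W$-Floer moduli space and a Morse trajectory space of $\Lambda$. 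The $s$-action filtration by the Morse index of the $\Lambda$-factor, or more robustly the filtration coming from the finite-dimensional $F_\gamma$ in~\eqref{eq:Floer2parbis}, makes $SC^{a,\Lambda}$ into a filtered complex whose differential has the form $\partial = \partial_{SC}\otimes 1 \pm 1\otimes \partial_{Morse} + (\text{higher filtration terms})$. Passing to the direct limit over the cofinal family and invoking the algebraic K\"unneth spectral sequence (degenerating over a field, since $\Lambda_\omega$ is then a field and all modules are free) yields $SH^{a,\Lambda}_*(W)\cong SH^a_*(W)\otimes H_*(\Lambda)$; one must also check the product structure is compatible with continuation maps so the isomorphism survives the direct limit.

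The main obstacle I expect is the analytic decoupling in step two: \emph{a priori} the linearized operator $D_{(u,\lambda)}$ from~\eqref{eq:Dulambda} has off-diagonal coupling terms (the $(D_\lambda J\cdot\ell)(\p_\theta u-X_{H_\lambda}(u))$ and $-J_\lambda(D_\lambda X_{H_\lambda}\cdot\ell)$ entries), and even for split Hamiltonians one needs to ensure, by a careful choice of a generic but still ``essentially split'' perturbation, that the relevant low-dimensional moduli spaces are exactly products and that transversality holds for the product data (the standard issue being that product almost complex structures are rarely regular, so one perturbs and controls the perturbation via the filtration). An alternative that avoids this is the algebraic route: establish directly that $SC^{a,\Lambda}_*$ for a split Hamiltonian is the tensor product \emph{complex} by a homotopy argument (comparing with the parametrized complex built from the Borel-type construction in~\cite{BOparam}), which reduces the statement to the purely algebraic K\"unneth theorem. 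Either way, the field-coefficient hypothesis is used exactly to kill the $\mathrm{Tor}$ terms so that the spectral sequence collapses at $E_2$.
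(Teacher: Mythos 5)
Your proposal is correct and follows essentially the same route as the paper: a cofinal family of split Hamiltonians $H_\lambda=K+f$ with $K$ admissible nondegenerate and $f$ Morse on $\Lambda$, decoupling of the parametrized Floer equations, the chain-level identification $SC^{a,\Lambda}_*(H,J,g)\simeq SC^a_*(K,J)\otimes C_*(-f,g)$, and the algebraic K\"unneth theorem over a field. The transversality worry motivating your filtration/spectral-sequence fallback does not actually arise: with $J$ independent of $\lambda$ and $H$ split, the equation for $u$ does not involve $\lambda$ and the equation for $\lambda$ does not involve $u$, so $D_{(u,\lambda)}$ is block diagonal, regularity of $(K,J)$ plus a Morse--Smale metric for $f$ already gives regularity of the pair, the moduli spaces are honest products (not fibered products), and the differential is exactly $\partial_{\mathrm{Floer}}\otimes 1\pm 1\otimes\partial_{\mathrm{Morse}}$ with no higher filtration terms.
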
  
 
\begin{proof} We use Hamiltonians of the form  
$$ 
H_\lambda(\theta,x):= K(\theta,x) + f(\lambda).  
$$ 
Here $f:\Lambda\to \R$ is a Morse function and $K$ is an admissible 
Hamiltonian having nondegenerate orbits. We choose a generic 
admissible almost complex structure $J$ on $W$ and a generic 
Riemannian metric $g$ on $\Lambda$.  
 
The critical points of the parametrized action functional are of the 
form $(\gamma,\lambda)$, $\gamma\in \cP(K)$, 
$\lambda\in \mathrm{Crit}(f)$. The  
properties of the parametrized Robbin-Salamon index described 
in~\cite[Proposition 4]{BOparam} imply that  
$$ 
\mu(\gamma,\lambda)=\mu_{RS}(\gamma) + \mathrm{ind}_f(\lambda)-\frac m 
2, 
$$ 
where $\mathrm{ind}_f(\lambda)$ denotes the Morse index of 
$\lambda\in\mathrm{Crit}(f)$. It follows that  
$$ 
|(\gamma,\lambda)|=-\mu_{RS}(\gamma) + m - \mathrm{ind}_f(\lambda) = 
-\mu_{RS}(\gamma) + \mathrm{ind}_{-f}(\lambda). 
$$ 
The parametrized Floer equation is 
split and has the form   
$$ 
\left\{\begin{array}{rcl} \dbar_J u & = & J X_{H_\lambda} = JX_K, \\ 
\dot \lambda(s) & = & \int_{S^1} \vec \nabla_\lambda H(\theta, 
u(s,\theta), \lambda(s)) d\theta = \vec \nabla f(\lambda(s)). 
\end{array}\right.  
$$  
This follows from the obvious identities  
$$ 
X_{H_\lambda}(\theta,x,\lambda) \equiv X_K(\theta,x), 
\qquad 
\vec \nabla_\lambda H(\theta,x,\lambda) \equiv \vec \nabla 
f(\lambda). 
$$ 
We obtain an isomorphism of complexes  
$$ 
SC^{a,\Lambda}_*(H,J,g) \simeq SC^a_*(K,J) \otimes C_*(-f,g), 
$$ 
where $SC^a_*(K,J)$ denotes the Floer complex for $(K,J)$ in the free 
homotopy class $a$ (graded by 
$-\mu_{RS}(\gamma)$) and $C_*(-f,g)$ denotes the Morse complex for 
$(-f,g)$ (graded by $\mathrm{ind}_{-f}(\lambda)$). Since we use 
field coefficients the conclusion follows by the algebraic K\"unneth 
theorem.   
\end{proof}  
 
\begin{remark}{\bf (Naturality)} \label{rmk:SHincl} 
 An embedding of parameter spaces $\iota:\Lambda \hookrightarrow \Lambda'$ induces 
 a natural map $S\iota_*:SH^{a,\Lambda}_*(W)\to SH^{a,\Lambda'}_*(W)$ which is equal 
 to $\mathrm{Id}\otimes \iota_*$ via the K\"unneth isomorphism. This 
 can be seen by using a Hamiltonian 
 $K(\theta,x)+f(\lambda)$ on $S^1\times \widehat 
 W\times \Lambda$ as in the proof of Proposition~\ref{prop:Kunneth} above, 
 and a Hamiltonian $K(\theta,x)+\widetilde f(\lambda')$ on $S^1\times \widehat 
 W\times \Lambda'$, where 
 $\widetilde f=f-|z|^2$ in a tubular 
 neighbourhood of $\Lambda\subset \Lambda'$ and $z$ is the normal 
 coordinate.  
\end{remark}


\section{\boldmath$S^1$-equivariant theories} \label{sec:S1}
 
In \S\ref{sec:S1equivhom} we  
give a Morse theoretic presentation of $S^1$-equivariant homology of a 
manifold carrying an $S^1$-action. This serves as a motivation for 
\S\ref{sec:S1equivsymplhom}, where we give the definition of the 
$S^1$-equivariant symplectic homology groups $SH_*^{S^1}(W)$ 
following Viterbo~\cite[\S5]{V}.   
We adopt a slightly more general setting and define groups  
$SH_*^{a,S^1}(W)$ corresponding to nontrivial free homotopy classes of 
loops in $W$. 
 
\subsection[$S^1$-equivariant homology and Morse theory]{\boldmath$S^1$-equivariant homology and Morse theory} 
  \label{sec:S1equivhom} 
 
In this section $M$ denotes a finite-dimensional smooth manifold 
carrying a smooth action of $S^1$. Our aim is to give a description  
of  
$$ 
H_*^{S^1}(M):=H_*(M\times_{S^1} ES^1) 
$$ 
in terms of Morse homology. We recall that  
$\displaystyle ES^1=\lim_{\stackrel \longrightarrow N} S^{2N+1}$ and 
therefore $\displaystyle M\times_{S^1} ES^1 = \lim _{\stackrel 
  \longrightarrow N} \,  
M\times_{S^1} S^{2N+1}$. We denote   
$$ 
M_{S^1} := M\times _{S^1} ES^1, \quad M_{S^1}^{(N)} := M\times _{S^1} 
S^{2N+1}. 
$$ 
   
The first observation is  
that, given a positive integer $k$, the homology groups 
$H_*(M_{S^1}^{(N)})$ stabilize in degree $*\le k$ for $N$ large 
enough. Indeed, the equivariant inclusion $S^{2N+1}\hookrightarrow 
S^{2N+3}$ induces an inclusion of fibrations  
$$ 
\xymatrix 
@R=15pt 
{M\ \ar@{^(->}[r] & M_{S^1}^{(N)} \ar[r] \ar[d] & \C P^N \ar[d] \\ 
M\ \ar@{^(->}[r] & M_{S^1}^{(N+1)} \ar[r]  & \C P^{N+1} 
} 
$$ 
This induces in turn a morphism between the associated Leray-Serre 
spectral sequences which is an isomorphism on the $E^2$-page in total 
degree less than $N$. Functoriality of the Leray-Serre spectral 
sequence implies that, for $N$ large enough (and determined by $k$), 
the above inclusion induces isomorphisms $H_*(M_{S^1}^{(N)}) \stackrel 
\sim \to H_*(M_{S^1}^{(N+1)})$, $*\le k$ (see for 
example~\cite[Theorem~3.5]{McC}).    
 
We can give a description of $H_*(M_{S^1}^{(N)})$ in terms of  
Morse-Bott functions on $M\times S^{2N+1}$ as follows.  
We choose a function $a :M \times S^{2N+1} \to \R$ which is 
$S^1$-invariant, i.e.    
$$ 
a(\tau x,\tau\lambda)=a(x,\lambda), \quad \tau \in S^1, \ 
(x,\lambda)\in M\times S^{2N+1}, 
$$ 
and which has only Morse-Bott circles of critical 
points, i.e. the induced function $\underline a: 
M_{S^1}^{(N)}\to \R$ is Morse. We denote by $S_p$, $p\in 
\textrm{Crit}(a)$ these circles 
of critical points and by $[p]\in M_{S^1}^{(N)}$ the 
nondegenerate critical point of $\underline a$ corresponding to $S_p$, 
so that $S_p = S_{\tau\cdot p}$ and $[p]=[\tau\cdot p]$, 
$\tau\in S^1$. We denote by  
$$ 
\ind(S_p) = \ind([p]) 
$$ 
the Morse-Bott index of $S_p$.  
 
We choose 
a generic $S^1$-invariant metric $g$ on $M\times S^{2N+1}$ 
such that the gradient flow of $a$ has the Thom-Smale transversality 
property, i.e.    
$$ 
W^u(S_p) \pitchfork W^s(S_q), \quad p,q\in \textrm{Crit}(a). 
$$ 
This is equivalent to asking that the gradient flow of 
$\underline a$ with respect to the induced metric $\underline g$ on 
$M_{S^1}^{(N)}$ satisfies $W^u([p]) \pitchfork W^s([q])$, $[p],[q]\in 
\textrm{Crit}(\underline a)$.  
Given $\op,\up\in \textrm{Crit}(a)$ we denote by  
$$ 
\widehat \cM(S_\op,S_\up; a,g) 
$$ 
the {\bf space of gradient trajectories} consisting of maps 
$v=(u,\lambda):\R\to M\times S^{2N+1}$ which satisfy  
\begin{equation} 
\dot v = -\vec \nabla a (v)  
\quad \Leftrightarrow \quad  
\left\{\begin{array}{rcl} 
 \dot u & = & - \vec \nabla _x  a (u,\lambda), \\ 
 \dot \lambda & = & - \vec \nabla _\lambda  a(u,\lambda), 
\end{array}\right. 
\end{equation} 
and 
\begin{equation}  
\left\{\begin{array}{rcl}  
\displaystyle 
\lim_{s\to -\infty} v(s) & \in & S_\op, \\ 
\displaystyle 
\lim_{s\to \infty} v(s) & \in & S_\up, 
\end{array}\right.  
\quad \Leftrightarrow \quad  
\left\{\begin{array}{rcl} 
\displaystyle 
\lim_{s\to -\infty} (u(s),\lambda(s)) & = & (\overline x,\overline 
\lambda)\in S_\op, \\  
\displaystyle 
\lim_{s\to \infty} (u(s),\lambda(s)) & = & (\underline x,\underline 
\lambda)\in S_\up.  
\end{array}\right.   
\end{equation} 
Here the gradient $\vec \nabla$ is considered with respect to the metric 
$g$ and $\vec \nabla_x, \vec \nabla_\lambda$ are its components along $TM$ 
and $TS^{2N+1}$ respectively.  
Under the transversality assumption for 
the metric $g$ the space of gradient trajectories is a smooth manifold 
of dimension   
$$ 
\dim \, \widehat \cM(S_\op,S_\up; a,g) =  
\ind(S_\op) - \ind(S_\up) +1.  
$$ 
It carries a natural action of $\R$ by reparametrization and we 
denote by 
$$ 
\cM(S_\op,S_\up; a,g) := \widehat \cM(S_\op,S_\up; a,g) /\R 
$$ 
the {\bf moduli space of gradient trajectories}. In our setting the 
moduli space carries an action of $S^1$ and the quotient 
$$ 
\cM_{S^1}(S_\op,S_\up; a,g) := \cM(S_\op,S_\up; a,g)/S^1 
$$ 
is a smooth manifold of dimension  
$$ 
\dim \, \cM_{S^1}(S_\op,S_\up; a,g)= \ind(S_\op)-\ind(S_\up)-1. 
$$ 
 
The bundle with fiber $TW^u(\tau\cdot p)$, $\tau\in S^1$ over 
$S_p$ is orientable since $W^u(S_p):=\bigcup_{\tau\in S^1} 
W^u(\tau\cdot p)$ carries an action of $S^1$. We choose for each 
$S_p$ an orientation of this bundle, which amounts to choosing an 
orientation of $W^u(S_p)$. Since each $S_p$ inherits a natural 
orientation from $S^1$, this determines a coorientation of the bundle  
with fiber $TW^s(\tau\cdot p)$, $\tau\in S^1$ over 
$S_p$ and therefore a coorientation of $W^s(S_p):=\bigcup_{\tau\in S^1} 
W^s(\tau\cdot p)$. We get orientations on $\widehat 
\cM(S_\op,S_\up;a,g)$ and, after quotienting out $\R$ and $S^1$, we 
get orientations on $\cM_{S^1}(S_\op,S_\up;a,g)$, $\op,\up\in 
\textrm{Crit}(a)$. In particular, if $\ind(\op)-\ind(\up)=1$ the 
moduli space $\cM_{S^1}(S_\op,S_\up;a,g)$ is zero-dimensional and each 
element $[v]$ inherits a sign $\epsilon([v])$.  
 
We define the {\bf \boldmath$S^1$-equivariant Morse complex} by  
$$ 
C_k^{S^1} (a,g) := \bigoplus _{\ind(S_p)=k} \Z \langle S_p \rangle, 
$$ 
with the {\bf \boldmath$S^1$-equivariant Morse differential}  
$$ 
d^{S^1} : C_k^{S^1} \to C_{k-1} ^{S^1}, 
$$ 
$$ 
d^{S^1}\langle S_\op \rangle := \sum_{\ind(S_\op)-\ind(S_\up)=1 \ }  
\sum_{\ [v]\in \cM_{S^1}(S_\op,S_\up;a,g)} \epsilon([v]) \langle S_\up 
\rangle.   
$$ 
 
Since the elements of $\cM_{S^1}(S_\op,S_\up; a,g)$ are in one-to-one 
correspondence with elements of the moduli space 
$\cM([\op],[\up];\underline a, \underline g)$ of gradient trajectories 
of $\underline a$ with respect to the metric $\underline g$ on 
$M_{S^1}^{(N)}$, and since the rule for obtaining signs on 
$\cM_{S^1}(S_\op,S_\up;a,g)$ if $\ind(S_\op)-\ind(S_\up)=1$ 
induces the usual Morse homology rule for signs on 
$\cM([\op],[\up];\underline a, \underline g)$, we infer that the 
complex $(C_*^{S^1},d^{S^1})$ is tautologically isomorphic with the 
Morse complex of the pair $(\underline a,\underline g)$. Therefore 
$$ 
H_k(C_*^{S^1},d^{S^1}) \simeq H_k(M_{S^1}^{(N)}), \quad k\in \N 
$$ 
and, for $N$ large enough (depending on $k$), we have  
$$ 
H_k(C_*^{S^1},d^{S^1}) \simeq H_k^{S^1}(M). 
$$

\begin{remark} {\rm   
The previous construction admits an obvious reformulation for any 
manifold $P$ endowed with a free $S^1$-action: the homology of the 
quotient $P/S^1$ can be described in terms of Morse-Bott data on $P$ 
alone.  
} 
\end{remark}

\subsection[$S^1$-equivariant symplectic homology]{\boldmath$S^1$-equivariant symplectic homology}  
\label{sec:S1equivsymplhom}  
 
In this section we give the definition of $S^1$-equivariant symplectic 
homology following Viterbo~\cite{V}. Our treatment parallels the 
finite dimensional case as presented in~\S\ref{sec:S1equivhom}. We obtain the definition of 
$S^1$-equivariant symplectic homology as a variant of parametrized 
symplectic homology with $\Lambda=S^{2N+1}$.  
 
The space of smooth loops $\gamma:S^1\to \widehat W$ carries an action 
of $S^1$ given by   
$$ 
(\tau \cdot \gamma)(\cdot) := \gamma(\cdot -\tau), \quad \tau \in S^1.  
$$ 
 
Let $H:S^1 \times \widehat W \times S^{2N+1} \to \R$ be a family of 
Hamiltonian functions denoted by 
$H(\theta,x,\lambda)=H_\lambda(\theta,x)$. This defines a family of 
action functionals    
$$ 
\cA : C^\infty(S^1,\widehat W)\times S^{2N+1} \to \R, 
$$ 
$$ 
\cA(\gamma,\lambda) = \cA_\lambda(\gamma) := -\int_{[0,1]\times S^1} \sigma^* 
\om - \int_{S^1} H_\lambda(\theta,\gamma(\theta)) d\theta,  
$$ 
where $\sigma:[0,1]\times S^1\to \widehat W$ is a smooth homotopy from 
$l_{[\gamma]}$ to $\gamma$, and $l_{[\gamma]}$ is a fixed 
representative of the free homotopy class of $\gamma$.

\begin{lemma} 
 The family $\cA$ is invariant with respect to the diagonal action 
of $S^1$ if and only if the family of Hamiltonians satisfies  
\begin{equation} \label{eq:H} 
H_{\tau\lambda}(\theta+\tau,\cdot)=H_\lambda(\theta,\cdot) + 
r(\theta,\tau,\lambda)  
\end{equation}  
for some function $r:S^1\times S^1\times S^{2N+1} \to \R$ such that  
\begin{equation} \label{eq:h1}  
\int_{S^1} r(\theta,\tau,\lambda) d\theta =0 \mbox{ for all } \tau\in 
S^1,\ \lambda\in S^{2N+1} 
\end{equation}  
and 
\begin{equation} \label{eq:h2} 
r(\theta,1,\lambda) = 0, \qquad 
r(\theta+\tau,-\tau,\tau\lambda)=-r(\theta,\tau,\lambda).  
\end{equation}  
\end{lemma}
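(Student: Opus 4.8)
The plan is to compute directly how the action functional transforms under the diagonal $S^1$-action and read off the condition on $H$. First I would fix $\tau\in S^1$ and a loop $\gamma$, and compare $\cA(\tau\cdot\gamma,\tau\lambda)$ with $\cA(\gamma,\lambda)$. The symplectic-area term $-\int_{[0,1]\times S^1}\sigma^*\om$ is insensitive to the rotation: if $\sigma$ is a homotopy from $l_{[\gamma]}$ to $\gamma$, then $(s,\theta)\mapsto\sigma(s,\theta-\tau)$ is a homotopy from $l_{[\gamma]}(\cdot-\tau)$ to $\tau\cdot\gamma$, and since $l_{[\tau\cdot\gamma]}=l_{[\gamma]}$ (same free homotopy class) while $l_{[\gamma]}(\cdot-\tau)$ is homotopic to $l_{[\gamma]}$, condition~\eqref{eq:asph} guarantees the area integral is unchanged. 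So the whole $S^1$-dependence comes from the Hamiltonian term $-\int_{S^1}H_{\tau\lambda}(\theta,(\tau\cdot\gamma)(\theta))\,d\theta = -\int_{S^1}H_{\tau\lambda}(\theta,\gamma(\theta-\tau))\,d\theta$. Substituting $\theta'=\theta-\tau$ this equals $-\int_{S^1}H_{\tau\lambda}(\theta'+\tau,\gamma(\theta'))\,d\theta'$. Hence $\cA(\tau\cdot\gamma,\tau\lambda)=\cA(\gamma,\lambda)$ for all $\gamma,\lambda,\tau$ if and only if
$$
\int_{S^1}\bigl(H_{\tau\lambda}(\theta+\tau,x)-H_\lambda(\theta,x)\bigr)\,d\theta=0
$$
whenever $x$ ranges over the image of an arbitrary loop — and since loops can pass through any point of $\widehat W$, the natural sufficient condition is that the integrand, as a function of $\theta$, integrates to zero pointwise in $x$; defining $r(\theta,\tau,\lambda)$ by $r(\theta,\tau,\lambda):=H_{\tau\lambda}(\theta+\tau,\cdot)-H_\lambda(\theta,\cdot)$ (as an equality of functions on $\widehat W$, so really $r$ should be thought of as $\widehat W$-dependent, but the cocycle identities below force the cleaner form) gives exactly \eqref{eq:H} together with \eqref{eq:h1}.

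For the normalization conditions \eqref{eq:h2}: the first, $r(\theta,1,\lambda)=0$, is forced by consistency at $\tau=1$, since $H_{1\cdot\lambda}(\theta+1,\cdot)=H_\lambda(\theta,\cdot)$ because everything is $1$-periodic in the $S^1=\R/\Z$ variable. The second is the cocycle/antisymmetry relation: writing \eqref{eq:H} for the pair $(\theta+\tau,-\tau,\tau\lambda)$ we get $H_{(-\tau)(\tau\lambda)}(\theta+\tau-\tau,\cdot)=H_{\tau\lambda}(\theta+\tau,\cdot)+r(\theta+\tau,-\tau,\tau\lambda)$, i.e. $H_\lambda(\theta,\cdot)=H_{\tau\lambda}(\theta+\tau,\cdot)+r(\theta+\tau,-\tau,\tau\lambda)$; comparing with \eqref{eq:H} in the form $H_{\tau\lambda}(\theta+\tau,\cdot)=H_\lambda(\theta,\cdot)+r(\theta,\tau,\lambda)$ yields $r(\theta+\tau,-\tau,\tau\lambda)=-r(\theta,\tau,\lambda)$. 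Thus both relations in \eqref{eq:h2} are automatic consequences of \eqref{eq:H} once one insists that \eqref{eq:H} hold for all $\tau$ simultaneously — so the logical content is that \eqref{eq:H} with a family $r$ satisfying the group-compatibility is equivalent to invariance, and \eqref{eq:h1}--\eqref{eq:h2} are precisely the constraints that $r$ must satisfy to be such a coboundary.

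The one point requiring a little care — which I expect to be the main obstacle in a fully rigorous write-up — is the passage from "$\int_{S^1}\bigl(H_{\tau\lambda}(\theta+\tau,\gamma(\theta))-H_\lambda(\theta,\gamma(\theta))\bigr)\,d\theta=0$ for every loop $\gamma$" to the pointwise statement "$\int_{S^1}r(\theta,\tau,\lambda)(x)\,d\theta=0$ for every $x$". One direction is trivial (pointwise vanishing of the $\theta$-integral implies vanishing along any loop). For the converse one uses that through any point $x_0\in\widehat W$ and any prescribed value $\theta_0$ one can find loops concentrating near $x_0$; more cleanly, one tests against constant loops $\gamma\equiv x_0$ when the trivial homotopy class is available, and in general localizes. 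I would phrase the lemma so that $r$ is honestly a function on $S^1\times S^1\times S^{2N+1}\times\widehat W$ if needed, or else restrict attention — as the paper implicitly does for its cofinal families — to Hamiltonians of split or radial type for which $r$ genuinely depends only on $(\theta,\tau,\lambda)$; in that case the pointwise argument is immediate and the equivalence is clean. I would then remark that \eqref{eq:h1} is exactly the condition that the $S^1$-dependence of $\cA$ drops out, while \eqref{eq:h2} encodes that $r$ is a cocycle for the $S^1$-action, ensuring the construction is consistent under composition of rotations.
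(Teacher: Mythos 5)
Your computation of how $\cA$ transforms (invariance of the area term, change of variables in the Hamiltonian term) is fine, and your derivation of the two relations in \eqref{eq:h2} from \eqref{eq:H} is correct. But the proof as written does not establish the nontrivial implication of the lemma, namely that invariance of $\cA$ forces the difference $F(\theta,\tau,\lambda,x):=H_{\tau\lambda}(\theta+\tau,x)-H_\lambda(\theta,x)$ to be \emph{independent of $x\in\widehat W$} pointwise in $\theta$, which is exactly what is needed for $r$ to be a function of $(\theta,\tau,\lambda)$ alone as the statement requires. You misidentify the difficulty: testing against constant loops only gives $\int_{S^1}F(\theta,\tau,\lambda,x)\,d\theta=0$ for each fixed $x$, which is strictly weaker than $x$-independence of $F$, and your assertion that "the cocycle identities below force the cleaner form" is unjustified --- the relations \eqref{eq:h2} are compatible with an $x$-dependent $F$ and do not remove that dependence. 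Your fallback options (letting $r$ depend on $\widehat W$, or restricting to split/radial Hamiltonians) amount to changing the statement rather than proving it.

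The missing step is supplied in the paper by a linearization argument: from $\int_{S^1}F(\theta,\tau,\lambda,\gamma(\theta))\,d\theta=0$ for \emph{all} loops $\gamma$, one varies $\gamma$ in a neighbourhood of the constant loop at an arbitrary point $x$ and obtains $\int_{S^1}D_xF(\theta,\tau,\lambda,x)\cdot\zeta(\theta)\,d\theta=0$ for every loop $\zeta$ of tangent vectors at $x$; since $\zeta$ may be concentrated near any $\theta_0$ and point in any direction, this forces $D_xF(\theta,\tau,\lambda,x)=0$ for all $\theta$, hence $F(\theta,\tau,\lambda,x)=r(\theta,\tau,\lambda)$, and then \eqref{eq:h1} follows from the constant-loop test. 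Your "loops concentrating near $x_0$" remark gestures in this direction but is never turned into an argument, and as it stands it only reproves the integrated (weaker) statement; without the derivative step, or some equivalent localization in $\theta$, the equivalence claimed in the lemma is not proved.
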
  
 
\begin{proof} 
The nontrivial implication is that invariance of $\cA$ implies the 
desired condition on $H$. We thus assume that $\cA$ is 
invariant, i.e. $\cA_{\tau\lambda} 
(\tau\gamma) = \cA_\lambda(\gamma)$ for all loops $\gamma$. This 
is equivalent to the equality  
$$ 
\int_{S^1} H_{\tau\lambda}(\theta+\tau,\gamma(\theta))d\theta 
= \int_{S^1} H_\lambda(\theta,\gamma(\theta)) d\theta, \ \forall \ \gamma 
$$ 
and, denoting 
$F(\theta,\tau,\lambda,x):=H_{\tau\lambda}(\theta+\tau,x)-H_\lambda(\theta,x)$, we 
obtain   
$$ 
\int_{S^1} F(\theta,\tau,\lambda,\gamma(\theta))d\theta=0, \ \forall \ \gamma,\tau,\lambda. 
$$ 
By letting $\gamma$ vary in the neighbourhood of the constant loop at 
some $x\in \widehat W$ we see that 
we must have $\int_{S^1} D_x F(\theta,\tau,\lambda,x)\cdot \zeta(\theta) d\theta =0$ for 
all loops $\zeta$ of tangent vectors at $x$. It follows that $D_x 
F(\theta,\tau,\lambda,x)=0$ for all $\theta\in S^1$ and, since $x$ was chosen 
arbitrarily, we get $F(\theta,\tau,\lambda,x)=r(\theta,\tau,\lambda)$ 
with $\int_{S^1} r(\theta,\tau,\lambda) d\theta =0$. This 
shows~\eqref{eq:h1}, whereas~\eqref{eq:h2} is straightforward.   
\end{proof}  
 
\begin{remark} {\rm  
 Condition~\eqref{eq:H} holds for example if $r\equiv 0$, 
i.e. if the family $H$ satisfies 
\begin{equation} \label{eq:HS1inv}
H_{\tau\lambda}(\theta+\tau,\cdot)=H_\lambda(\theta,\cdot). 
\end{equation}
In particular one can choose the family $H$ to be given by a 
single autonomous Hamiltonian $H(\theta,x,\lambda)=H(x)$.  
} 
\end{remark} 
 
We denote by $\cH^{S^1}_N\subset \cH_{S^{2N+1}}$ the set of admissible   
Hamiltonian families $H:S^1\times \widehat W\times S^{2N+1}\to \R$ 
satisfying condition~\eqref{eq:HS1inv}. It follows from the 
definitions that there exists $t_0\ge 0$ such that, for $t\ge t_0$, we 
have $H(\theta,p,t,\lambda)=\beta e^t +\beta'(\lambda)$, with 
$0<\beta\notin\mathrm{Spec}(M,\alpha)$, and $\beta'\in 
C^\infty(S^{2N+1},\R)$ invariant under the action of $S^1$.  
 
The differential of $\cA$ is given by~\eqref{eq:dA} and critical 
points of $\cA$ satisfy~\eqref{eq:periodicpar}. Since $\cA$ is 
$S^1$-invariant, the set of critical points of $\cA$ is 
$S^1$-invariant as well,   
i.e. if $(\gamma,\lambda)\in\cP(H)$, then $(\tau\gamma,\tau\lambda)\in 
\cP(H)$ for all $\tau\in S^1$. Given 
$p:=(\gamma,\lambda)\in \cP(H)$ we denote  
$$ 
S_p=S_{(\gamma,\lambda)}:= \{(\tau\gamma,\tau\lambda) \ : \ 
\tau\in S^1\} \subset \cP(H), 
$$ 
so that $S_p=S_{\tau \cdot p}$, $\tau\in S^1$. We shall refer to $S_p$ 
as an {\bf \boldmath$S^1$-orbit of critical points} (of $\cA$).  
 
An {\bf admissible family of almost complex structures} 
$J=(J_\lambda^\theta)$ (in the sense of Section~\ref{sec:param}) is 
called {\bf \boldmath$S^1$-invariant} if it satisfies the condition    
\begin{equation} \label{eq:J} 
J_{\tau\lambda}^{\theta+\tau}=J_\lambda^\theta, \qquad \theta\in S^1,\ 
\tau\in S^1, \ \lambda\in S^{2N+1}.  
\end{equation} 
Such a $J^\theta$ induces an $S^{2N+1}$-family of $L^2$-metrics on 
$C^\infty(S^1,\widehat W)$ defined by  
$$ 
\langle \zeta,\eta\rangle_\lambda := \int_{S^1} 
\om(\zeta(\theta),J_\lambda^\theta\eta(\theta)) d\theta, \quad \zeta,\eta\in 
T_\gamma C^\infty(S^1,\widehat 
W)=\Gamma(\gamma^*T\widehat W). 
$$ 
Condition~\eqref{eq:J} ensures that, when coupled with an 
$S^1$-invariant metric $g$ on $S^{2N+1}$, this family gives 
rise to an $S^1$-invariant metric on 
$C^\infty(S^1,\widehat W) \times S^{2N+1}$. We denote by $\cJ_N^{S^1}$ 
the set of pairs $(J,g)$ consisting of an $S^1$-invariant admissible 
family of almost complex structures $J$ on $\widehat W$ and of an 
$S^1$-invariant Riemannian metric $g$ on $S^{2N+1}$.  
 
Given  
$H\in\cH^{S^1}_N$, $(J,g)\in\cJ^{S^1}_N$, and 
$\op:=(\og,\olambda),\up:=(\ug,\ulambda)\in \cP(H)$, we denote by  
$$ 
\widehat \cM(S_\op,S_\up;H,J,g) 
$$ 
the {\bf space of \boldmath$S^1$-equivariant Floer trajectories}, consisting of 
pairs $(u,\lambda)$ with  
$$ 
u:\R\times S^1 \to \widehat W, \qquad \lambda:\R\to S^{2N+1},  
$$ 
satisfying  
\begin{eqnarray}  
\label{eq:Floer1} 
 \p_s u + J_{\lambda(s)}^\theta \p_\theta u - 
J_{\lambda(s)}^\theta X_{H_{\lambda(s)}}^\theta (u) & = & 0, \\ 
\label{eq:Floer2} 
 \dot \lambda (s) - \int_{S^1} \vec \nabla_\lambda 
H(\theta,u(s,\theta),\lambda(s)) d\theta & = & 0,   
\end{eqnarray} 
and  
\begin{equation} \label{eq:asymptotic} 
 \lim_{s\to -\infty} (u(s,\cdot),\lambda(s)) \in S_\op, \quad  
 \lim_{s\to +\infty} (u(s,\cdot),\lambda(s)) \in S_\up. 
\end{equation} 
 
The additive group $\R$ acts on $\widehat \cM(S_\op,S_\up;H,J,g)$ 
by reparametrization in the $s$-variable. We denote by  
$$ 
\cM(S_\op,S_\up;H,J,g) := \widehat \cM(S_\op,S_\up;H,J,g)/\R 
$$ 
the {\bf moduli space of \boldmath$S^1$-equivariant Floer trajectories}.  
This space is endowed with natural evaluation maps  
$$ 
\oev:\cM(S_\op,S_\up;H,J,g)\to S_\op, \qquad 
\uev:\cM(S_\op,S_\up;H,J,g)\to S_\up. 
$$ 
 
An $S^1$-orbit of critical points $S_p\subset \cP(H)$ is called {\bf 
  nondegenerate} if the Hessian $d^2\cA(\gamma,\lambda)$ has a 
  $1$-dimensional kernel $V_p$ for some (and hence any) $(\gamma,\lambda)\in 
  S_p$. It follows from~\cite[Lemma~2.3]{BOtrans}
  that nondegeneracy is equivalent to the fact 
  that the kernel of the asymptotic operator $D_p$  
  defined in~\eqref{eq:Dasy} 
  is also 
  $1$-dimensional and equal to $V_p$. In both cases, a generator of 
  $V_p$ is given by the infinitesimal generator of the $S^1$-action.  
 
We define the set $\cH^{S^1}_{N,\reg}\subset \cH^{S^1}_N$ to consist 
of elements $H$ such that, for any $p\in\cP(H)$, the $S^1$-orbit $S_p$ 
is nondegenerate. We proved in~\cite[Proposition~5.1]{BOtrans} that 
 the set $\cH^{S^1}_{N,\reg}$ is of the second Baire category in 
 $\cH^{S^1}_N$. Moreover, if $H\in \cH^{S^1}_{N,\reg}$, each 
 $S^1$-orbit $S_p\subset C^\infty(S^1,\widehat W)\times S^{2N+1}$ is 
 isolated.

Let $d>0$ be small enough (for a fixed $H\in\cH^{S^1}_{N,\reg}$, one 
can take $d>0$ to be smaller than the minimal spectral gap of the asymptotic 
operators $D_p$, $p\in\cP(H)$), and fix $1<p<\infty$. Given 
$\op,\up\in \cP(H)$ and $(u,\lambda)\in \widehat 
\cM(S_\op,S_\up;H,J,g)$, we define   
\begin{eqnarray*} 
  \cW^{1,p,d} & := & W^{1,p}(u^*T\widehat 
  W;e^{d|s|}dsd\theta) \oplus W^{1,p}(\lambda^* 
  TS^{2N+1};e^{d|s|}ds)\oplus V_{\op}\oplus V_{\up}, \\ 
 \cL^{p,d} & := & L^p(u^*T\widehat 
  W;e^{d|s|}dsd\theta) \oplus L^p(\lambda^* 
  TS^{2N+1};e^{d|s|}ds). 
\end{eqnarray*} 
Here we identify $V_{\op}$, $V_{\up}$ with the $1$-dimensional spaces  
generated by the sections $\beta(s)(\dot\og,X_{\olambda})$, respectively $\beta(-s)(\dot\ug,X_{\ulambda})$ 
 of $u^*T\widehat W\oplus \lambda^*TS^{2N+1}$. For this identification, we denote by $X_{\olambda}$, $X_{\ulambda}$ the values of the infinitesimal generator of the $S^1$-action on $S^{2N+1}$  
 at the points $\olambda$, respectively $\ulambda$, and choose a cut-off function $\beta:\R\to [0,1]$ which is equal to $1$ near $-\infty$, and vanishes near $+\infty$. 
 For the next proposition we recall the definition of the linearized operator $D_{(u,\lambda)}$ in~\eqref{eq:Dulambda}.
 
\begin{proposition} \label{prop:indexMB} 
Assume $S_\op,S_\up\subset \cP(H)$ are nondegenerate. For any 
$(u,\lambda)\in \widehat \cM(S_\op,S_\up;H,J,g)$ the operator  
$$ 
D_{(u,\lambda)}: \cW^{1,p,d} \to \cL^{p,d} 
$$ 
is Fredholm of index  
$$ 
\ind\, D_{(u,\lambda)} = -\mu(\op) +\mu(\up) + 1. 
$$ 
\end{proposition}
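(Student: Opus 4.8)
The plan is to compare the $S^1$-equivariant operator $D_{(u,\lambda)}: \cW^{1,p,d} \to \cL^{p,d}$, with its exponentially weighted Sobolev spaces and the extra finite-dimensional summands $V_{\op}\oplus V_{\up}$, to the unweighted parametrized operator whose index is already computed in Theorem~\ref{thm:index}. The underlying trajectory $(u,\lambda)$ is a parametrized Floer trajectory, but its asymptotics at $\pm\infty$ are the whole $S^1$-orbits $S_\op$, $S_\up$, which are degenerate as critical points of $\cA$ in the sense that the Hessian has a one-dimensional kernel $V_p$ spanned by the infinitesimal generator of the $S^1$-action. Thus $D_{(u,\lambda)}$ viewed on the \emph{unweighted} spaces $\cW^{1,p}\to\cL^p$ is not Fredholm; one remedies this by the standard device of passing to spaces with exponential weight $e^{d|s|}$ for $d$ smaller than the smallest nonzero eigenvalue of the asymptotic operators $D_\op$, $D_\up$, and then adding back the one-dimensional cokernel/kernel contributions coming from the weak eigenvalue $0$ at each end. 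This is exactly the construction behind the definition of $\cW^{1,p,d}$ and $\cL^{p,d}$.

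First I would recall the Lockhart--McOwen / Floer--Hofer spectral theory for asymptotically constant operators of the form~\eqref{eq:Dtriv}: for a weight $d$ not in the spectrum of the limiting operators, $D_{(u,\lambda)}$ acting between the $d$-weighted spaces is Fredholm, and its index jumps in a controlled way as $d$ crosses an eigenvalue, the jump being the multiplicity of that eigenvalue. Concretely, since $0$ is an eigenvalue of both $D_\op$ and $D_\up$ with one-dimensional eigenspace $V_\op$, resp. $V_\up$ (by the nondegeneracy hypothesis together with~\cite[Lemma~2.3]{BOtrans}), the operator on $W^{1,p}(e^{d|s|})$-type spaces has index equal to the index on the unweighted spaces minus one jump at the positive end plus one jump at the negative end. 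A clean bookkeeping: let $D^{0}$ denote the operator on the unweighted spaces with asymptotics the \emph{nearby nondegenerate} data (this is the setting of Theorem~\ref{thm:index}, applied to a small perturbation making the orbits nondegenerate, or equivalently one applies the index formula for the Morse--Bott situation directly). The relation is
$$
\ind\big(D_{(u,\lambda)}:\cW^{1,p,d}\to\cL^{p,d}\big) = \ind\big(D_{(u,\lambda)}: W^{1,p}_{-d}\to L^p_{-d}\big) + \dim V_\op + \dim V_\up,
$$
where the first two summands on the right add the $V_\op$, $V_\up$ into the domain, and $\ind(D: W^{1,p}_{-d})$ is computed by the weight-crossing formula to be $(-\mu(\op)+\mu(\up)) - 1$; here $\mu$ is the parametrized Robbin--Salamon index, which for a Morse--Bott $S^1$-orbit is the index of the corresponding critical manifold computed with the half-shift conventions of~\cite{BOparam}. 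Adding $\dim V_\op + \dim V_\up = 2$ gives $-\mu(\op)+\mu(\up)+1$, as claimed.

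The key steps in order: (1) establish that $D_{(u,\lambda)}$ on $\cW^{1,p,d}\to\cL^{p,d}$ is Fredholm, by invoking the Fredholm property of the unweighted parametrized operator from~\cite[Theorem~2.5]{BOtrans} together with conjugation by $e^{d|s|/p}$, which turns the $d$-weighted problem into an unweighted problem for a zeroth-order perturbation of $D_{(u,\lambda)}$ whose limiting operators $D_p \pm \tfrac d2\,\mathrm{Id}$ are invertible precisely because $d$ lies below the spectral gap; (2) compute the index of this conjugated operator via Theorem~\ref{thm:index}, noting that shifting $D_p$ by $-\tfrac d2$ at $-\infty$ and by $+\tfrac d2$ at $+\infty$ moves the eigenvalue $0$ out of the way without crossing any other eigenvalue, so the Robbin--Salamon / Conley--Zehnder index of the shifted asymptotic operators equals that of $D_p$ restricted to the complement of $V_p$; carefully, the shift at $-\infty$ by $-\tfrac d2$ pushes $0$ to the negative side and the shift at $+\infty$ by $+\tfrac d2$ pushes $0$ to the positive side, which is why the weighted index is $-\mu(\op)+\mu(\up)-1$ rather than $+1$; (3) add the two one-dimensional spaces $V_\op$, $V_\up$ to the domain and observe each raises the index by one, giving the stated formula.

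The main obstacle I anticipate is step~(2): getting the sign of the jump right. One must be careful about the orientation of the weight (the paper uses $e^{d|s|}$, i.e. \emph{growth} at both ends, so in the conjugated picture the asymptotic operators are $D_p - \tfrac d2$ at $s\to+\infty$ and $D_p + \tfrac d2$ at $s\to-\infty$, not the other way round), and about the convention relating the Robbin--Salamon index of a path of asymptotic operators to the Fredholm index, including the $\tfrac m2$ shift built into $\mu(p)$ and the $-1$ coming from the $\R$-reparametrization in the moduli space versus the operator index. A robust way to pin down all signs is to test the formula on the split model of Proposition~\ref{prop:Kunneth}: taking $H_\lambda = K + f$ with $f$ a perfect Morse function on $\Lambda = S^{2N+1}$ and an invariant metric, the $S^1$-equivariant Floer trajectories decouple into a Floer cylinder in $\widehat W$ and a negative gradient trajectory of $f$ on $S^{2N+1}$, and the claimed index reduces to the classical Conley--Zehnder index formula plus the Morse-theoretic index count for gradient trajectories between $S^1$-orbits of critical points of $f$ on $S^{2N+1}$, whose dimension formula $\ind(S_\op)-\ind(S_\up)-1$ after the $S^1$-quotient matches exactly the $+1$ in the statement once one unwinds $\mu(p) = \mu_{RS}(\gamma) + \mathrm{ind}_f(\lambda) - \tfrac m2$. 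Everything else — the Fredholm property, the compactness of weight-crossing — is by now standard and can be cited from~\cite{BOtrans,BOparam}.
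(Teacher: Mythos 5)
Your proposal follows essentially the same route as the paper's proof: restrict $D_{(u,\lambda)}$ to the weighted Sobolev part, conjugate by the exponential weight (the paper uses $e^{\frac dp |s|}$, so the shifts are $\pm\frac dp\one$ rather than your $\pm\frac d2$, which is immaterial), apply the nondegenerate-asymptotics Fredholm and index results of~\cite{BOtrans,BOparam} to the perturbed asymptotic operators $D_{\op}+\frac dp\one$ at $-\infty$ and $D_{\up}-\frac dp\one$ at $+\infty$ to get the weighted index $-\mu(\op)+\mu(\up)-1$, and then add $2$ for the summands $V_{\op}\oplus V_{\up}$, exactly as in the paper. The only blemish is a sign slip: your step~(2) assigns the shifts to the two ends in the opposite way from your own later parenthetical (the latter being the correct assignment, the one the paper uses, and the one that actually produces the $-1$ before adding $2$), so you should reconcile the two statements, e.g.\ by the split-model check you suggest.
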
  
 
In the above statement, it is understood that the trivialization used 
to define $\mu(\op)$ is obtained from the trivialization used 
to define $\mu(\up)$ by continuation along the map $u$. 
 
\begin{proof} 
The Fredholm property was proved in~\cite[Proposition~5.2]{BOtrans} as follows. 
Let $\cW^{1,p}$ and $\cL^p$ be defined as $\cW^{1,p,d}$ and $\cL^{p,d}$ above, with $d=0$ and without taking into account the direct summands $V_{\op}$, $V_{\up}$. Let $\widetilde D_{(u,\lambda)}:\cW^{1,p}\to \cL^p$ be the operator obtained by conjugating with $e^{\frac d p |s|}$ the restriction of $D_{(u,\lambda)}$ to  
$W^{1,p}(u^*T\widehat   W;e^{d|s|}dsd\theta) \oplus W^{1,p}(\lambda^* TS^{2N+1};e^{d|s|}ds)$.  
It follows from our choice of $d>0$ that 
$\widetilde D_{(u,\lambda)}$ has nondegenerate asymptotics, hence it is Fredholm by~\cite[Theorem~2.6]{BOtrans}. Since the restriction of $D_{(u,\lambda)}$ to a codimension $2$ subspace is conjugate to $\widetilde D_{(u,\lambda)}$, it follows that $D_{(u,\lambda)}$ is Fredholm as well. 

The asymptotic operator at $-\infty$ is $\widetilde D_{\op}=D_{\op}+\frac d p \one$, and the asymptotic operator at $+\infty$ is $\widetilde D_{\up}=D_{\up}-\frac d p \one$.  The indices after perturbation are given by $\mu(\op)+\frac 12$, respectively $\mu(\up)-\frac 12$, and
using the Main Theorem in~\cite{BOparam} we obtain
\begin{eqnarray*}
\ind \, D_{(u,z)} & = & \ind \, \widetilde D_{(u,z)} + 2 \\
& = & -(\mu(\op)+\frac 1 2) + (\mu(\up)-\frac 1 2) + 2 \\
& = & -\mu(\op) +\mu(\up) + 1.
\end{eqnarray*}
\end{proof}  
 
Let $H\in\cH^{S^1}_{N,\reg}$. A pair $(J,g)\in \cJ^{S^1}_N$ is called 
{\bf regular for \boldmath$H$} if the operator $D_{(u,\lambda)}$ is surjective 
for any $\op,\up\in\cP(H)$ and any $(u,\lambda)\in \widehat 
\cM(\op,\up;H,J,g)$. We denote the set of such regular pairs by 
$\cJ^{S^1}_{N,\reg}(H)$. 

We defined in~\cite[\S7]{BOtrans} two special classes $\cH_*\cJ'\subset \cH\cJ'$ in $\cH_N^{S^1}\times\cJ_N^{S^1}$. We proved in~\cite[Theorem~7.4]{BOtrans} that 
there exists an open subset $\cH\cJ'_{\mathrm{reg}}\subset \cH\cJ'$
which is dense in a neighbourhood of $\cH_*\cJ'\subset \cH\cJ'$, and
which consists of triples $(H,J,g)$ such that  
$$
H\in\cH^{S^1}_{N,\mathrm{reg}},\qquad (J,g)\in\cJ^{S^1}_{N,\mathrm{reg}}(H).
$$

Let $(H,J,g)\in \cH\cJ'_{\reg}$.  
Recall that, for each $p=(\gamma,\lambda)\in\cP(H)$, we have chosen a 
cylinder $\sigma_p:[0,1]\times S^1\to\widehat W$ such that 
$\sigma_p(0,\cdot)=l_{[\gamma]}$ and $\sigma_p(1,\cdot)=\gamma$. We 
define $\overline \sigma_p(s,\theta):=\sigma_p(1-s,\theta)$. Given 
$\op=(\og,\olambda),\up=(\ug,\ulambda)\in\cP(H)$ we define  
$$ 
\cM^A(S_\op,S_\up;H,J,g)\subset \cM(S_\op,S_\up;H,J,g) 
$$ 
to consist of trajectories $(u,\lambda)$ such that 
$[\sigma_{\op}\#u\#\overline \sigma_{\up}]=A\in H_2(\widehat 
W;\Z)$. It follows from Proposition~\ref{prop:indexMB} that  
\begin{equation} \label{eq:dimMS} 
\dim\, \cM^A(S_\op,S_\up;H,J,g) = -\mu(\op) +\mu(\up) +2\langle 
c_1(T\widehat W),A\rangle. 
\end{equation}  
Since $\cA$ and $(J,g)$ are $S^1$-invariant,  
the moduli space $\cM^A(S_\op,S_\up;H,J,g)$ 
carries a free action of $S^1$ induced by the diagonal 
action on $C^\infty(S^1,\widehat W) \times S^{2N+1}$, i.e. 
$$ 
\tau\cdot (u,\lambda) := (u(\cdot,\cdot-\tau),\tau\lambda).  
$$ 
We denote the quotient by  
$$ 
\cM_{S^1}(S_\op,S_\up;H,J,g) := \cM(S_\op,S_\up;H,J,g)/S^1. 
$$ 
This is a smooth manifold of dimension  
$$ 
\dim\, \cM^A_{S^1}(S_\op,S_\up;H,J,g) = -\mu(\op) +\mu(\up) +2\langle 
c_1(T\widehat W),A\rangle -1. 
$$ 
 
\begin{remark} \label{rmk:coherent}
An important feature of these moduli spaces is that they admit a system  
of coherent orientations in the sense of~\cite{FH}. The difference with respect to  
the setup of Floer homology is that the asymptotes for the moduli spaces are not fixed, but can vary along circles $S_p$, $p=(\gamma,\lambda)\in\cP(H)$. However, if one chooses the trivializations of $\gamma^*T\widehat W \oplus T_\lambda S^{2N+1}$ so that they are invariant under the $S^1$-action,  
then the analytical expression of the asymptotic operators $D_p$, $p\in\cP(H)$ only depends on $S_p$.  
It then follows from the arguments in~\cite{FH} that the spaces of Fredholm operators of the  
form~\eqref{eq:Dtriv} with nondegenerate asymptotics of the form  
$D_p$, $p\in\cP(H)$ are contractible, and hence the corresponding determinant line bundles are orientable.  
The system of coherent orientations on the moduli spaces $ \cM^A_{S^1}(S_\op,S_\up;H,J,g)$ is obtained by pulling back a system of coherent orientations on these spaces of Fredholm operators, as in~\cite{FH}. This implies that all the moduli spaces involved are orientable and hence, unlike the situation of symplectic field theory, there is no notion of good and bad $S^1$-orbit in the context of $S^1$-equivariant symplectic homology. 
\end{remark}
 
Given a free homotopy class $a$ in $\widehat W$, we define 
the {\bf \boldmath$S^1$-equivariant chain complex} $SC^{a,S^1,N}_*(H,J,g)$ as a 
chain complex whose underlying $\Lambda_\omega$-module is  
\begin{equation} \label{eq:SCS1} 
SC^{a,S^1,N}_*(H):=SC^{a,S^1,N}_*(H,J,g):= 
\bigoplus_{S_p\subset \cP^a(H)} \Lambda_\omega\langle 
S_p\rangle.  
\end{equation}  
The grading is defined by 
$$
|S_p\, e^A| := -\mu(p) +N -2\langle 
c_1(T\widehat W),A\rangle.
$$ 
(The reason for introducing a shift by $N$ will become apparent in the proof of Lemma~\eqref{lem:minus} below.)
We define the {\bf \boldmath$S^1$-equivariant differential} 
$\partial^{S^1}:SC^{a,S^1,N}_*(H)\to SC^{a,S^1,N}_{*-1}(H)$ 
by 
$$ 
\partial^{S^1}(S_\op):=\sum_{\substack{ 
  S_\up\subset \cP^a(H) \\ 
|S_\op| - |S_\up\, e^A|=1}} 
\ \sum_{\scriptstyle [u]\in \cM^A_{S^1}(S_\op,S_\up;H,J,g)} 
\epsilon([u])S_\up\, e^A. 
$$ 
The sign $\epsilon([u])$ is obtained by comparing the coherent 
orientation of the moduli space 
$\cM^A_{S^1}(S_\op,S_\up;H,J,g)$ with the orientation induced by the 
infinitesimal generator of the $S^1$-action.  
 
\begin{proposition} \label{prop:partialS1}  
The map $\partial^{S^1}$ satisfies  
$$ 
\partial^{S^1}\circ \partial^{S^1}=0. 
$$ 
\end{proposition}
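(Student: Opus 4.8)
The plan is to prove $\partial^{S^1}\circ\partial^{S^1}=0$ by the usual strategy: identifying the relevant broken trajectories as the boundary of a compactified one-dimensional moduli space, and then showing that they cancel in pairs with signs. Concretely, fix $S_{\op},S_{\uu\uu p}\subset\cP^a(H)$ (I will write $S_{\up}$ for the third orbit) with $|S_{\op}|-|S_{\up}\,e^A|=2$, and consider the moduli space $\cM^A_{S^1}(S_{\op},S_{\up};H,J,g)$, which by Proposition~\ref{prop:indexMB} and~\eqref{eq:dimMS} is a smooth one-dimensional manifold. The coefficient of $S_{\up}\,e^A$ in $\partial^{S^1}\circ\partial^{S^1}(S_{\op})$ is, up to sign, the signed count of its boundary points. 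First I would establish that this moduli space admits a compactification whose boundary consists precisely of broken trajectories $[u_1]\#[u_2]$, where $[u_1]\in\cM^{A_1}_{S^1}(S_{\op},S_{\mathbf{q}};H,J,g)$ and $[u_2]\in\cM^{A_2}_{S^1}(S_{\mathbf{q}},S_{\up};H,J,g)$ with $A_1+A_2=A$ and each factor rigid (zero-dimensional). Since the signed count of the boundary of a compact oriented one-manifold is zero, summing over $\up$ and $A$ gives $\partial^{S^1}\circ\partial^{S^1}=0$, provided the orientations are coherent.

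The compactness input is the three-step argument already described in Section~\ref{sec:param} for the parametrized differential: a uniform $C^0$-bound on the $\widehat W$-component via the maximum principle, convergence of the $S^{2N+1}$-component by Arzel\`a--Ascoli, and Floer--Gromov compactness for the $\widehat W$-component. The only new feature here, compared to the parametrized setting, is that the asymptotics are not fixed points but $S^1$-orbits $S_p$; however, by~\cite[Lemma~2.3]{BOtrans} the nondegeneracy of $S_p$ means the kernel of $D_p$ is exactly the one-dimensional space $V_p$ spanned by the infinitesimal generator of the $S^1$-action, so after passing to the quotient by $S^1$ the situation is that of a Morse--Bott problem along circles, and the usual gluing theorem (in the weighted Sobolev spaces $\cW^{1,p,d},\cL^{p,d}$ with the extra finite-dimensional summands $V_{\op}\oplus V_{\up}$) applies verbatim. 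Bubbling off of holomorphic spheres is excluded by the asphericity assumption~\eqref{eq:asph}: no energy can escape into a sphere, so the limiting configuration is an honest broken trajectory with no sphere components.

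The step I expect to be the main obstacle is the orientation bookkeeping: one must check that the coherent orientations of Remark~\ref{rmk:coherent}, which are pulled back from a system of coherent orientations on the spaces of Fredholm operators of the form~\eqref{eq:Dtriv} with $S^1$-invariant trivializations, are compatible with the gluing map in such a way that the two boundary points of each component of $\cM^A_{S^1}(S_{\op},S_{\up};H,J,g)$ contribute with opposite signs. This is the equivariant analogue of the standard Floer-theoretic sign verification; the key point is that, because we chose the trivializations of $\gamma^*T\widehat W\oplus T_\lambda S^{2N+1}$ to be $S^1$-invariant, the asymptotic operators $D_p$ depend only on $S_p$ and not on the chosen representative, so the Floer--Hofer algorithm~\cite{FH} produces a coherent system on the quotient moduli spaces and the sign computation reduces to the one in ordinary Floer homology. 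Once this is in place, $\partial^{S^1}\circ\partial^{S^1}=0$ follows.
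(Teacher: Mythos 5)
Your argument is correct in outline, but it is genuinely different from the proof the paper actually gives; in fact it coincides with the alternative sketched in the remark immediately following the paper's proof. The paper does not compactify the $S^1$-equivariant moduli spaces directly: instead it introduces the parametrized Morse--Bott complex $BC^{a,N}_*(H,\{f_p\},J,g)$ built from auxiliary perfect Morse functions $f_p$ on the critical circles, observes via the Correspondence Theorem~3.7 of~\cite{BOauto} that its differential $d$ squares to zero (because it is identified with the parametrized Floer differential~\eqref{eq:diff-param} of a suitable perturbation of $H$, where $\p^2=0$ is already known), filters the complex so that $d=d^0+d^1+d^2$, proves $d^0=0$ (Proposition~\ref{prop:d0}), and identifies $\bar d^1$ with $\partial^{S^1}\otimes\mathrm{Id}$ under the isomorphism $\Phi$ (Proposition~\ref{prop:d1}); then $\partial^{S^1}\circ\partial^{S^1}=0$ is pure algebra from $\bar d^1\circ\bar d^1=0$. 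That route buys two things: it avoids redoing the $S^1$-invariant Morse--Bott compactness, gluing and orientation analysis that your plan leans on, and it sets up exactly the filtration and spectral sequence later used for the Gysin sequence. Your direct route is more geometric and self-contained in spirit, but it carries the technical burden the paper defers: the gluing at an orbit asymptote must be done equivariantly, and the boundary description should be stated as pairs of trajectories with a common asymptotic point on $S_{\mathbf q}$ taken up to simultaneous $S^1$-translation. One point you should make explicit rather than assume: the reason the breaking is necessarily $1+1$ (each factor rigid modulo $\R$ and $S^1$) is that the unquotiented moduli spaces $\cM^{A_i}(S_\op,S_{\mathbf q})$, $\cM^{A_i}(S_{\mathbf q},S_\up)$ carry free $S^1$-actions, so whenever nonempty their dimensions are at least $1$; this is the same dimension argument the paper uses in Proposition~\ref{prop:d0} and Lemma~\ref{lem:continuation}, and without it your claim that the boundary consists precisely of rigid-by-rigid broken configurations is unjustified.
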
  
 
The proof of Proposition~\ref{prop:partialS1} is given in 
Section~\ref{sec:MBparam}.  
We define the {\bf \boldmath$S^1$-equivariant Floer homology groups} by 
$$ 
SH^{a,S^1,N}_*(H,J,g):=H_*(SC^{a,S^1,N}_*(H),\partial^{S^1}). 
$$ 
 
\begin{proposition} \label{prop:indepJg}  
Let $H\in\cH^{S^1}_{N,\reg}$. Given 
$(J_1,g_1),(J_2,g_2)\in\cJ^{S^1}_{N,\reg}(H)$, there exists a 
canonical isomorphism  
$$ 
SH^{a,S^1,N}_*(H,J_1,g_1) \simeq SH^{a,S^1,N}_*(H,J_2,g_2). 
$$ 
\end{proposition}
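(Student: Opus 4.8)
The plan is to run the standard ``change of auxiliary data'' argument in the Morse--Bott / $S^1$-equivariant framework already used to define $\partial^{S^1}$. Since $H$ is fixed, the two chain complexes $SC^{a,S^1,N}_*(H,J_1,g_1)$ and $SC^{a,S^1,N}_*(H,J_2,g_2)$ share the same underlying $\Lambda_\omega$-module $\bigoplus_{S_p\subset\cP^a(H)}\Lambda_\omega\langle S_p\rangle$ and the same grading, so only the differentials have to be compared.

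First I would fix a smooth path $s\mapsto(J_s,g_s)\in\cJ^{S^1}_N$, $s\in\R$, equal to $(J_1,g_1)$ for $s\le-1$ and to $(J_2,g_2)$ for $s\ge1$, with each $(J_s,g_s)$ being $S^1$-invariant in the sense of~\eqref{eq:J}. For $\op=(\og,\olambda),\up=(\ug,\ulambda)\in\cP(H)$ one considers the pairs $(u,\lambda)$ solving the $s$-dependent version of~\eqref{eq:Floer1} — in which $J_{\lambda(s)}^\theta$ is replaced by $(J_s)_{\lambda(s)}^\theta$ — together with~\eqref{eq:Floer2}, with asymptotics in $S_\op$, $S_\up$ as in~\eqref{eq:asymptotic}. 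Using the weighted spaces $\cW^{1,p,d}$, $\cL^{p,d}$ of \S\ref{sec:S1equivsymplhom}, the linearized operator is Fredholm by the same argument as in Proposition~\ref{prop:indexMB}, with index $-\mu(\op)+\mu(\up)+1$, the index being insensitive to the $s$-dependence of the coefficients. As no $\R$-reparametrization is available here, the corresponding moduli space in the homotopy class $A$ has dimension $-\mu(\op)+\mu(\up)+2\langle c_1(T\widehat W),A\rangle+1$, and after dividing by the (still free) $S^1$-action its dimension becomes $-\mu(\op)+\mu(\up)+2\langle c_1(T\widehat W),A\rangle$. For a generic path, transversality holds by the Sard--Smale argument of~\cite{BOtrans}; compactness is established by the same three steps as for $\partial^{S^1}$ — a $C^0$-bound on the $\widehat W$-component from the maximum principle (immediate since $H$ is $s$-independent), convergence of the $S^{2N+1}$-component from Arzel\`a--Ascoli, and Floer--Gromov compactness for the $\widehat W$-component — and the moduli spaces carry coherent orientations as in Remark~\ref{rmk:coherent}. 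Counting with signs the zero-dimensional quotients, i.e.\ those with $|S_\op|=|S_\up\,e^A|$, defines a degree-preserving map $\Phi_{\{J_s,g_s\}}:SC^{a,S^1,N}_*(H,J_1,g_1)\to SC^{a,S^1,N}_*(H,J_2,g_2)$. Analysing the ends of the one-dimensional moduli spaces — each such end being a broken configuration obtained by gluing, at one of the two cylindrical ends where the path is constant, a rigid $s$-dependent trajectory to a rigid $s$-independent trajectory for $(J_1,g_1)$ or for $(J_2,g_2)$ — shows that $\Phi_{\{J_s,g_s\}}$ is a chain map.

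It then remains to see that $\Phi_{\{J_s,g_s\}}$ is a quasi-isomorphism and that the induced map on homology is independent of all choices. Running the path backwards produces a chain map $\Psi$ in the opposite direction; $\Psi\circ\Phi_{\{J_s,g_s\}}$ is the continuation map of the concatenated path, which — the relevant space of $S^1$-invariant admissible data being contractible — is homotopic rel endpoints to the constant path at $(J_1,g_1)$, and a generic homotopy of paths yields, by counting the rigid elements of the associated two-parameter moduli spaces, a chain homotopy. Since the continuation map of the constant path is chain homotopic to the identity, $\Psi\circ\Phi_{\{J_s,g_s\}}\simeq\id$ and, symmetrically, $\Phi_{\{J_s,g_s\}}\circ\Psi\simeq\id$, so $\Phi_{\{J_s,g_s\}}$ induces an isomorphism on homology. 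Running the same homotopy-of-paths argument for two paths with the same endpoints shows that this isomorphism does not depend on the path (nor on the auxiliary generic perturbations), and is therefore canonical.

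The main obstacle is the transversality statement for the $s$-dependent problem — and, for the canonicity part, for the associated two-parameter problem — subject to $S^1$-invariance of the data. This is exactly the difficulty that, for the static differential $\partial^{S^1}$, forced the introduction of the special classes $\cH_*\cJ'$ and of~\cite[Theorem~7.4]{BOtrans}; it is dealt with in the same way, using that the $S^1$-action on non-constant Floer trajectories is free so that the genericity argument can be carried out on the quotient. Everything else — the energy identity and maximum principle behind compactness, the Morse--Bott gluing used in the breaking analysis, and the coherent orientations — is a routine adaptation of the constructions already set up for $\partial^{S^1}$.
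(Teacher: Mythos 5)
Your proposal is correct in outline, but it takes a genuinely different route from the paper. You construct the continuation map directly on the equivariant complex $SC^{a,S^1,N}_*(H,\cdot)$ by counting rigid $s$-dependent solutions modulo the free $S^1$-action, and you establish the chain-map, composition and homotopy identities by $S^1$-invariant compactness and gluing with Morse--Bott asymptotes; this is precisely the ``direct'' alternative that the paper acknowledges (for $\partial^{S^1}\circ\partial^{S^1}=0$) in the remark following Proposition~\ref{prop:partialS1} but deliberately does not carry out. The paper instead works with the filtered parametrized Morse--Bott complex $BC^{a,N}_*(H,\{f_p\},J,g)$: the continuation morphisms $\sigma_{21},\sigma_{12}$ of \S\ref{sec:continuation} are defined via cascade-type fibered products, so that all analysis is imported from the Correspondence Theorem of~\cite{BOauto} and no equivariant gluing is needed; Lemma~\ref{lem:continuation} shows they preserve the filtration, Propositions~\ref{prop:chain-homotopy} and~\ref{prop:composition} show that the two composites are homotopic to the identity through chain homotopies of order $1$, and the algebraic fact that order-$k$ homotopies induce equal maps on the pages $r>k$ of the associated spectral sequences yields mutually inverse isomorphisms on $E^{a,N;2}_{*,*}\simeq SH^{a,S^1,N}_*\otimes H_*(S^1)$, with canonicity again from Proposition~\ref{prop:chain-homotopy}. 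The two constructions give the same map (this is the content of the closing remark of \S\ref{sec:continuation}), so the difference lies in what must be proved: your route produces the chain-level formula immediately, but it requires redoing the Morse--Bott gluing invariantly, a careful argument that the constant homotopy induces the identity (nonconstant rigid-mod-$S^1$ solutions are excluded by the free $\R$-action together with regularity, and the constant solutions are cut out transversally in the weighted spaces $\cW^{1,p,d}$), and the observation that $\Psi\circ\Phi_{\{J_s,g_s\}}$ agrees with the continuation map of the concatenated path only up to chain homotopy, or on the nose for a sufficiently large gluing parameter --- the analogue of Proposition~\ref{prop:composition}, which you state as an equality a bit too quickly; the paper's route trades all of this analysis for filtration bookkeeping and a spectral-sequence argument. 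Your transversality caveat is the right one: as in the paper, regularity within the $S^1$-invariant class is only available near the special classes of~\cite{BOtrans}, so the homotopies $(J_s,g_s)$ and the homotopies of homotopies must be taken regular in that restricted sense, exactly as in the hypotheses of Lemma~\ref{lem:continuation} and Proposition~\ref{prop:chain-homotopy}.
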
  
 
We prove Proposition~\ref{prop:indepJg} in  
Section~\ref{sec:continuation}. Given $H\in\cH^{S^1}_{N,\reg}$ we shall denote  
$SH^{a,S^1,N}_*(H):=SH^{a,S^1,N}_*(H,J,g)$ for $(J,g)\in 
\cJ^{S^1}_{N,\reg}(H)$. In analogy with the construction of symplectic 
homology, we define   
$$ 
SH^{a,S^1,N}_*(W):=\lim_{\stackrel \longrightarrow {H\in 
  \cH^{S^1}_{N,\reg}}} SH^{a,S^1,N}_*(H). 
$$ 
The {\bf \boldmath$S^1$-equivariant symplectic homology groups of \boldmath$W$} are 
defined by  
$$ 
SH^{a,S^1}_*(W):=\lim_{\stackrel \longrightarrow N} SH^{a,S^1,N}_*(W). 
$$ 
The direct limit is taken with respect to the embeddings 
$S^{2N+1}\hookrightarrow S^{2N+3}$, inducing maps  
$SH^{a,S^1,N}_*(W)\to SH^{a,S^1,N+1}_*(W)$ (see 
Remark~\ref{rmk:SHincl}).  
 
For the particular case of the trivial homotopy class $a=0$, we denote 
the $S^1$-equivariant symplectic homology groups by $SH^{S_1}_*(W)$.  
Given $H\in\cH^{S^1}_{N,\reg}$ we define the {\bf parametrized reduced 
  action functional} $\cA^0:C^\infty_{\mathrm{contr}}(S^1,\widehat 
W)\times S^{2N+1}\to\R$ by  
$$ 
\cA^0(\gamma,\lambda):=-\int_{D^2}\sigma^*\widehat \omega - \int_{S^1} 
H(\theta,\gamma(\theta),\lambda)\, d\theta. 
$$ 
Here $\sigma:D^2\to\widehat W$ is a smooth extension of $\gamma$, and 
$\cA^0$ is well-defined due to assumption~\eqref{eq:asph}.  
 
Similarly to the case of symplectic homology, we define a special 
cofinal class of Hamiltonian families $\cH^{\prime\, S^1}_N\subset 
\cH^{S^1}_N$, consisting of elements $H=(H_\lambda)\in\cH^{S^1}_N$ 
such that $H_\lambda\in \cH'$ for all $\lambda\in S^{2N+1}$ (see 
Section~\ref{sec:symplhom} for the definition of the class $\cH'$).  
 
Given $H\in\cH^{\prime\, S^1}_{N,\reg}:=\cH^{\prime\, S^1}_N\cap 
\cH^{S^1}_{N,\reg}$, $(J,g)\in\cJ^{S^1}_{N,\reg}(H)$, and $\eps>0$ 
small enough, we define the chain complexes
$$ 
SC^{-,S^1,N}_*(H,J,g):=\bigoplus _{\substack{S_p\subset\cP^0(H) 
    \\ \cA^0(p)\le \eps}} \Lambda_\omega \langle S_p\rangle \subset 
SC^{S^1,N}_*(H,J,g) 
$$ 
and  
$$ 
SC^{+,S^1,N}_*(H,J,g):=SC^{S^1,N}_*(H,J,g)/SC^{-,S^1,N}_*(H,J,g). 
$$ 
The differential on $SC^{\pm,S^1,N}_*(H,J,g)$ is induced by 
$\partial^{S^1}$. The corresponding homology groups 
$SH^{\pm,S^1,N}_*(H,J,g)$ do not depend on $(J,g)$ and $\eps$, and we 
define  
$$ 
SH^{\pm,S^1,N}_*(W):=\lim_{\stackrel \longrightarrow 
  {H\in\cH^{\prime\, S^1}_{N,\reg}}}SH^{\pm,S^1,N}_*(H,J,g). 
$$ 
Passing to the direct limit over $N\to\infty$, we define  
$$ 
SH^{\pm,S^1}_*(W):=\lim_{\stackrel \longrightarrow 
  {N}}SH^{\pm,S^1,N}_*(W). 
$$ 
We call $SH^{+,S^1}_*(W)$ the {\bf positive \boldmath$S^1$-equivariant 
  symplectic homology group} of $(W,\omega)$. It follows from the 
definitions that this fits into the {\bf tautological long exact 
  sequence}  
$$ 
\dots \to SH^{+,S^1}_{k+1}(W) \to SH^{-,S^1}_k(W)\to SH^{S^1}_k(W) \to 
SH^{+,S^1}_k(W)\to \dots  
$$ 
 
\begin{lemma} \label{lem:minus} Assume $W$ has positive contact type boundary in the 
  sense of Section~\ref{sec:symplhom}. There is a natural isomorphism  
$$ 
SH^{-,S^1}_*(W) \simeq H^{S^1}_{*+n}(W,\partial W;\Lambda_\omega). 
$$ 
Here $H^{S^1}_{*+n}(W,\partial W;\Lambda_\omega)\simeq 
H_{*+n}(W,\partial W;\Lambda_\omega)\otimes H_*(\C P^\infty;\Z)$ 
denotes the $S^1$-equivariant homology of the pair $(W,\partial W)$ 
with respect to the trivial $S^1$-action. 
\end{lemma}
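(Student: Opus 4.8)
The plan is to follow the template of the non-equivariant identity $SH^-_*(W)=H_{*+n}(W,\partial W;\Lambda_\om)$ recalled in Section~\ref{sec:symplhom} --- which holds precisely under the positive contact type hypothesis --- but to compute the $S^1$-equivariant side with a cofinal family of ``split'' Hamiltonians. The upshot will be a tautological identification of the relevant chain complex with the tensor product of a complex computing $SH^-_*(W)$ and the $S^1$-equivariant Morse complex of $S^{2N+1}$ (which computes $H_*(\C P^N)$); the lemma then follows from the non-equivariant theorem, the Morse-theoretic model of Section~\ref{sec:S1equivhom}, and the algebraic K\"unneth formula.

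First I would fix $N$ and use Hamiltonian families $H_\lambda(\theta,x)=K(\theta,x)+f(\lambda)$, where $K\in\cH'_\reg$ is a $C^2$-small cofinal Hamiltonian for the non-equivariant theory (so that the elements of $\cP^0(K)$ with small action are exactly the critical points of $K|_W$) and $f\colon S^{2N+1}\to\R$ is the standard $S^1$-invariant Morse-Bott function for the Hopf action, with $N+1$ critical Hopf circles $S_0,\dots,S_N$ of even Morse-Bott indices. Since $f$ is $S^1$-invariant, $K+f$ satisfies~\eqref{eq:HS1inv}. I would take $J$ independent of $\theta$ and $\lambda$ --- hence $S^1$-invariant by~\eqref{eq:J} --- and regular for $K$, and $g$ an $S^1$-invariant metric on $S^{2N+1}$ inducing a Morse-Smale metric for $\underline f$ on $\C P^N$; then the parametrized Floer equations split exactly as in the proof of Proposition~\ref{prop:Kunneth}, the linearized operator $D_{(u,\lambda)}$ splits accordingly, and transversality of the equivariant moduli spaces reduces to transversality of each factor (one checks the triple lies in the class $\cH\cJ'_\reg$ of~\cite[\S7]{BOtrans}). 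One caveat: the collar orbits of the $\theta$-independent $K$ come in $S^1$-families, so the critical points $(\gamma,\lambda)$ with $\gamma$ non-constant need not define nondegenerate $S^1$-orbits; I would fix this by a small $S^1$-equivariant perturbation of $K+f$ supported in the collar, which achieves $H\in\cH^{\prime\, S^1}_{N,\reg}$ and is invisible to the subcomplex $SC^{-,S^1,N}_*(H,J,g)$ since (by the maximum principle) its generating trajectories stay in $W$.

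Next I would identify $SC^{-,S^1,N}_*(H,J,g)$ with $SC^-_*(K,J)\otimes_\Z C_*(\underline f)$, where $C_*(\underline f)$ is the Morse complex of $\underline f$ on $\C P^N$, i.e.\ $\Z$ in each even degree $0,2,\dots,2N$ with vanishing differential. On generators this is immediate: the $S^1$-orbits of small-action critical points of $\cA$ are the $S^1\cdot(x,\lambda_j)=\{x\}\times S_j$ with $x\in\mathrm{Crit}(K|_W)$, and a short computation with~\eqref{eq:d2A} (using that $df$ vanishes along $S_j$) shows these are nondegenerate. The grading is the tensor-product grading: additivity of the parametrized Robbin-Salamon index for split Hamiltonians gives $\mu(x,\lambda_j)=\mu_{RS}(x)+N-\ind(S_j)$ (the term $N-\ind(S_j)$ being the Robbin-Salamon index of $S_j$), so that $|S_{(x,\lambda_j)}|=-\mu_{RS}(x)+\ind(S_j)$ --- this is where the normalization shift by $N$ enters, and it makes the answer independent of $N$. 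For the differential, the dimension formula~\eqref{eq:dimMS} together with the evenness of all $\ind(S_j)$ forces every rigid equivariant trajectory between two such generators to have $S^{2N+1}$-component constant in a single $S_j$ and $\widehat W$-component independent of $\theta$; by the Morse-Floer comparison for constant orbits of a $C^2$-small Hamiltonian (part of the non-equivariant theory), these are precisely the Morse trajectories of $K|_W$. Hence $\partial^{S^1}=\partial^-_K\otimes\mathrm{id}$. Passing to homology --- and using that $C_*(\underline f)$ has vanishing differential and is a free $\Z$-module, so that no Tor terms arise --- the non-equivariant theorem of Section~\ref{sec:symplhom} yields
\[
SH^{-,S^1,N}_*(H,J,g)\ \simeq\ SH^-_*(W;\Lambda_\om)\otimes_\Z H_*(\C P^N;\Z)\ \simeq\ H_{*+n}(W,\partial W;\Lambda_\om)\otimes_\Z H_*(\C P^N;\Z).
\]

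Finally I would pass to the limits. Independence of $(J,g)$ and $\eps$ is Proposition~\ref{prop:indepJg} and the discussion following it; cofinality of (almost-)split Hamiltonians in $\cH^{\prime\, S^1}_N$, together with the fact that the equivariant continuation maps of Section~\ref{sec:continuation} are filtered and restrict to $SC^{-,S^1,N}$ --- inducing there, between split Hamiltonians, the non-equivariant continuation map tensored with the identity --- gives $SH^{-,S^1,N}_*(W)\simeq H_{*+n}(W,\partial W;\Lambda_\om)\otimes_\Z H_*(\C P^N;\Z)$; and the limit over $N$, induced by $S^{2N+1}\hookrightarrow S^{2N+3}$ and hence by $\C P^N\hookrightarrow\C P^{N+1}$ on the second factor, yields
\[
SH^{-,S^1}_*(W)\ \simeq\ H_{*+n}(W,\partial W;\Lambda_\om)\otimes_\Z H_*(\C P^\infty;\Z)\ =\ H^{S^1}_{*+n}(W,\partial W;\Lambda_\om),
\]
the equivariant homology being with respect to the trivial action. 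All the identifications being canonical up to continuation isomorphisms, the resulting isomorphism is natural. The main obstacle is the chain-level identification in the third step: the index bookkeeping that produces the shift by $N$, and the verification that --- despite the $S^1$-invariance constraint on $(J,g)$ --- the moduli spaces stay regular and the equivariant differential on the small-action part genuinely reduces to the Morse differential of $K|_W$.
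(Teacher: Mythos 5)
Your proposal is correct and follows essentially the same route as the paper: a (collar-perturbed) split Hamiltonian $K+\widetilde f$ with split $(J,g)$, a chain-level identification of the small-action subcomplex with a tensor product of a complex computing $H_{*+n}(W,\partial W;\Lambda_\om)$ and a Morse model for $\C P^N$, the index bookkeeping via the splitting axiom producing the shift by $N$, and then direct limits over $H$ and $N$. The only cosmetic differences are that the paper writes the first factor directly as the Morse complex of $-K$ and uses the $S^1$-equivariant Morse complex of $-\widetilde f$ for an arbitrary Morse function on $\C P^N$, whereas you take a perfect Morse function and quote the non-equivariant isomorphism $SH^-_*\simeq H_{*+n}(W,\partial W)$ as a black box.
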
  
 
\begin{proof} 
We consider a Hamiltonian $H\in\cH^{\prime\, S^1}_{N,\reg}$ which has
the form
\begin{equation} \label{eq:split}
H(\theta,x,\lambda)=K(x)+\widetilde f(\lambda)
\end{equation}
on $S^1\times W\times S^{2N+1}$, with $K:W\to \R$ a $C^2$-small
function, and $\widetilde f:S^{2N+1}\to \R$ the lift of a Morse
function $f:\C P^N\to\R$. We choose $(J,g)\in
\cJ^{S^1}_{N,\reg}(H)$ such that $J$ is independent of $\theta$ and
$\lambda$ on $W$. To find such a pair we use that $H$ has the split form~\eqref{eq:split} and the manifold $W$ is symplectically aspherical. Firstly, by~\cite[Theorems~7.3 and~8.1]{SZ} we can find a generic such $J$ which is regular for the Floer equation involving $K$ on $W$. Secondly, since the Hamiltonian $H$ is split and independent of $\theta$, the PDE system for the parametrized Floer trajectories is split as well, the second equation~\eqref{eq:Floer2par} in this system reduces to the 
negative 
gradient flow equation for 
$-\widetilde f$,
and we can therefore choose a generic $S^1$-invariant $g$ which is regular for the latter. Then $(J,g)\in\cJ^{S^1}_{N,\reg}(H)$. 

Since the
parametrized Floer equation is split and the Floer complex for $(K,J)$
reduces to the 
homological 
Morse complex
for $-K$,
we have an isomorphism of complexes
\begin{equation} \label{eq:ggg}
SC^{-,S^1,N}_*(H,J,g)=
C_{*+n}(-K,J;\Lambda_\omega)\otimes C^{S^1}_*(-\widetilde f,g;\Z).
\end{equation}
Here $C_*$ denotes the corresponding Morse complexes, and our convention for the grading again plays a role. Since $C^{S^1}_*(\widetilde f,g;\Z)$ corresponds to a Morse complex on $\C P^N$, the conclusion follows.

To see that the grading in~\eqref{eq:ggg} is correct, let us consider a critical point 
$p=(\gamma,\lambda)$,
with $\gamma$ a constant orbit of $X_K$ at a critical point, still denoted $\gamma$. Let $\mu_{RS}$ denote the Robbin-Salamon index, and $\mathrm{ind}(q,\phi)$ the index of a critical point $q$ of a function $\phi$. Using the \emph{(Splitting)} axiom in~\cite[Prop.~4]{BOparam}, we obtain $\mu(p)=\mu_{RS}(\gamma)+\frac 1 2 \mathrm{sign}\, \mathrm{Hess}_\lambda(-\tf) = n-\mathrm{ind}(\gamma;-K)+\frac 1 2 (2N-2\,\mathrm{ind}(\lambda;-\tf)) = n-\mathrm{ind}(\gamma;-K)+ N-\mathrm{ind}(\lambda;-\tf)$, so that $\mathrm{ind}(\gamma;-K)+\mathrm{ind}(\lambda;-\tf)=|p|+n$. 
\end{proof}


\section{Morse-Bott constructions} \label{sec:MB}  
 
\subsection{Morse-Bott complex for parametrized symplectic homology} 
  \label{sec:MBparam}

We describe in this section a Morse-Bott construction for parametrized 
symplectic homology in the case when $\Lambda=S^{2N+1}$ and the action 
functional $\cA:C^\infty(S^1,\widehat W)\times S^{2N+1}\to \R$ is 
$S^1$-invariant with respect to the diagonal action of $S^1$. The 
situation is analogous to that of Floer homology for an autonomous 
Hamiltonian considered in~\cite{BOauto}.

Let $H\in\cH^{S^1}_{N,\reg}$ and $(J,g)\in\cJ^{S^1}_{N,\reg}(H)$ as in 
Section~\ref{sec:S1equivsymplhom}. For each $S^1$-orbit of critical 
points $S_p\subset \cP(H)$ we choose a perfect Morse function 
$f_p:S_p\to\R$. We denote by $m_p$, $M_p$ the minimum, respectively 
the maximum of $f_p$. Given $\op,\up\in\cP(H)$, 
$Q_\op\in\mathrm{Crit}(f_\op)$, $Q_\up\in\mathrm{Crit}(f_\up)$, and 
$m\ge 0$, we denote by   
$$ 
\cM^A_m(Q_\op,Q_\up;H,\{f_p\},J,g) 
$$ 
the union for $p_1,\dots,p_{m-1}\in\cP(H)$ and $A_1+\dots+A_m=A$ of 
the fibered products  
\begin{eqnarray*} 
&& 
\hspace{-1cm}W^u(Q_\op)  
\times_{\oev} 
(\cM^{A_1}(S_{\op}\,,S_{p_1})\!\times\!\R^+) 
{_{\varphi_{f_{p_1}}\!\circ\uev}}\!\times   
_{\oev} 
(\cM^{A_2}(S_{p_1},S_{p_2})\!\times\!\R^+) \\ 
&&  
{_{\varphi_{f_{p_2}}\!\circ\uev}\times_{\oev}} \ldots\, 
{_{\varphi_{f_{p_{m-1}}}\!\!\circ\uev}}\!\!\times 
_{\oev}   
\cM^{A_m}(S_{p_{m-1}},\!S_{\up})  
{_{\uev}\times} W^s(Q_\up). 
\end{eqnarray*} 
Here we emphasize that in the moduli spaces $\cM^{A_1}(S_{\op}\,,S_{p_1})$, $\cM^{A_2}(S_{p_1},S_{p_2})$, $\dots$ the $S^1$-action has \emph{not} been quotiented out, as opposed to the previous section where we have quotiented out the $S^1$-action in order to define the $S^1$-equivariant differential. 
It follows from~\cite[Lemma~3.6]{BOauto} that, for a generic choice of 
the collection of Morse functions $\{f_p\}$, the previous fibered 
product is a smooth manifold of dimension  
\begin{eqnarray*}  
\lefteqn{\dim \, \cM^A_m(Q_\op,Q_\up;H,\{f_p\},J,g)} \\ 
& = & 
-\mu(\op)+\ind_{f_\op}(Q_\op) + \mu(\up) - \ind_{f_\up}(Q_\up) + 
2\langle c_1(T\widehat W),A\rangle -1. 
\end{eqnarray*}  
We denote  
$$ 
\cM^A(Q_\op,Q_\up;H,\{f_p\},J,g):=\bigcup_{m\ge 0}  \cM^A_m(Q_\op,Q_\up;H,\{f_p\},J,g). 
$$ 
 
Given a free homotopy class $a$ of loops in $\widehat W$, we define the {\bf 
  parametrized Morse-Bott chain complex} $BC^{a,N}_*(H,\{f_p\},J,g)$  
as a chain complex whose underlying $\Lambda_\omega$-module is  
$$ 
BC^{a,N}_*(H):=BC^{a,N}_*(H,\{f_p\},J,g) := \bigoplus_{S_p\subset \cP^a(H)} \Lambda_\omega \langle 
m_p,M_p\rangle.  
$$ 
The grading is given by  
\begin{eqnarray*} 
|m_p \, e^A| & := & -\mu(\gamma,\lambda) + 1 - 2\langle c_1(T\widehat 
 W),A\rangle, \\ 
|M_p\, e^A| & := & -\mu(\gamma,\lambda) - 2\langle c_1(T\widehat 
 W),A\rangle. 
\end{eqnarray*}  
The {\bf parametrized Morse-Bott differential}  
$$ 
d:BC^{a,N}_*(H)\to BC^{a,N}_{*-1}(H) 
$$  
is defined by 
\begin{equation} \label{eq:dMB}  
dQ_\op := \sum_{\substack{ 
  \up\in \cP^a(H), Q_\up\in \mathrm{Crit}(f_{\up}) \\ 
|Q_\op| - |Q_\up\, e^A|=1}} 
\ \sum_{\scriptstyle \u\in \cM^A(Q_\op,Q_\up;H,\{f_p\},J,g)} 
\epsilon(\u)Q_\up\, e^A, \quad Q_\op \in {\rm Crit}(f_\op). 
\end{equation}  
The sign $\eps(\u)$ is determined by the fibered-sum rule from 
coherent orientations on the relevant spaces of Fredholm operators, as  
explained in~\cite[Section~4.4]{BOauto}.  
 
The Correspondence Theorem~3.7 in~\cite{BOauto} carries over to this situation to show that there is a bijective correspondence preserving signs between the moduli spaces $\cM^A(Q_\op,Q_\up;H,\{f_p\},J,g)$ with $|Q_\op| - |Q_\up\, e^A|=1$ and the moduli spaces $\cM^A(Q_\op,Q_\up;H',J,g)$, where $H'$ is a suitable perturbation of $H$ defined using the collection of Morse functions $\{f_p\}$. Since the differential $\p$ in~\eqref{eq:diff-param} for parametrized contact homology squares to zero, this implies $d^2=0$ as well. 

Similarly to the construction of symplectic homology, we define 
$$ 
BH^{a,N}_*(W):=\lim_{\stackrel \longrightarrow {H\in 
  \cH^{S^1}_{N,\reg}}} H_*(BC^{a,N}_*(H),d), 
$$ 
where the direct limit is taken with respect to increasing homotopies 
of Hamiltonians. It then follows from the Correspondence Theorem~3.7 
in~\cite{BOauto} that  
$$ 
BH^{a,N}_*(W)=SH^{a,S^{2N+1}}_*(W).  
$$ 
 
We now define a filtration on $BC^{a,N}_*(H)$ as follows. Let  
$$ 
B_kC^{a,N}_*(H) := \bigoplus_{ 
\substack{S_p\subset \cP^a(H) \\ A\in H_2(W;\Z) \\ -\mu(p)-2\langle 
  c_1(T\widehat W),A\rangle =k}} \langle m_p\, e^A, \ M_p \, e^A\rangle. 
$$ 
 
\begin{proposition} The $\Z$-modules  
\begin{equation} \label{eq:filtration}
F_\ell BC^{a,N}_*(H)
:= \bigoplus _{k\le \ell} B_kC^{a,N}_*(H), 
\qquad \ell\in\Z 
\end{equation}
form an increasing filtration on 
$BC^{a,N}_*(H)$.  
\end{proposition}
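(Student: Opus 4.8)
The plan is to verify the three properties that make \eqref{eq:filtration} an increasing filtration of the chain complex $BC^{a,N}_*(H)$: that the submodules $F_\ell BC^{a,N}_*(H)$ are nested, that their union is all of $BC^{a,N}_*(H)$, and --- the only point with real content --- that each $F_\ell BC^{a,N}_*(H)$ is invariant under the Morse--Bott differential $d$ of \eqref{eq:dMB}, hence a subcomplex (it is automatically a graded $\Z$-submodule, being cut out of the direct-sum decomposition by a condition on the filtration level of the generators). Nestedness is immediate from \eqref{eq:filtration} since $k\le\ell$ forces $k\le\ell+1$, and exhaustion is clear: every generator $m_p\,e^A$ or $M_p\,e^A$ has one well-defined filtration level $k=-\mu(p)-2\langle c_1(T\widehat W),A\rangle$, so it lies in $B_kC^{a,N}_*(H)\subset F_kBC^{a,N}_*(H)$. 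It therefore suffices to prove $d\big(B_kC^{a,N}_*(H)\big)\subset F_kBC^{a,N}_*(H)$ for every $k$, since then $d\big(F_\ell BC^{a,N}_*(H)\big)\subset\sum_{k\le\ell}d\big(B_kC^{a,N}_*(H)\big)\subset\sum_{k\le\ell}F_kBC^{a,N}_*(H)=F_\ell BC^{a,N}_*(H)$.

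To prove this I would, by $\Z$-linearity of $d$, reduce to a single generator and track its filtration level. Writing $\delta(m_p):=1$ and $\delta(M_p):=0$, the grading conventions read $|Q_p\,e^A|=-\mu(p)+\delta(Q_p)-2\langle c_1(T\widehat W),A\rangle$ for $Q_p\in\{m_p,M_p\}$, so the filtration level of $Q_p\,e^A$ is $\kappa(Q_p\,e^A):=-\mu(p)-2\langle c_1(T\widehat W),A\rangle=|Q_p\,e^A|-\delta(Q_p)$, which in particular does not see the minimum/maximum choice. Fix a generator $Q_\op\,e^{A_0}$ with $\kappa(Q_\op\,e^{A_0})=k$, and let $\epsilon(\u)\,Q_\up\,e^{A+A_0}$ be a term of $d(Q_\op\,e^{A_0})$: by \eqref{eq:dMB} it carries a trajectory $\u\in\cM^A(Q_\op,Q_\up;H,\{f_p\},J,g)$, and for its contribution to be nonzero the index constraint $|Q_\op|-|Q_\up\,e^A|=1$ must hold. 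Substituting the grading formula rewrites this as
$$
(\mu(\op)-\mu(\up))-2\langle c_1(T\widehat W),A\rangle=\delta(Q_\op)-\delta(Q_\up)-1,
$$
whence $\kappa(Q_\up\,e^{A+A_0})-k=(\mu(\op)-\mu(\up))-2\langle c_1(T\widehat W),A\rangle=\delta(Q_\op)-\delta(Q_\up)-1\in\{-2,-1,0\}$. This is never positive, so the term $Q_\up\,e^{A+A_0}$ lies in $F_kBC^{a,N}_*(H)$; since every term does, $d(Q_\op\,e^{A_0})\in F_kBC^{a,N}_*(H)$, as required.

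There is no genuine obstacle: the statement is bookkeeping with the index formula of Proposition~\ref{prop:indexMB} (equivalently the dimension formula for the fibered products $\cM^A_m$) and the grading conventions. The two things to watch are that the differential \eqref{eq:dMB} --- multi-cascade contributions included --- always outputs a single generator $Q_\up\,e^A$ whose total class $A=A_1+\dots+A_m$ is the one entering $|Q_\op|-|Q_\up\,e^A|=1$, so this single constraint governs all of its terms; and that $\kappa$ ignores the min/max dichotomy, so the $+1$ distinguishing $|m_p\,e^A|$ from $|M_p\,e^A|$ is exactly what keeps the terms of the shape $m_\op\mapsto M_\up\,e^A$ --- the only ones realizing $\delta(Q_\op)-\delta(Q_\up)-1=0$ --- at the same filtration level rather than pushing them up one level. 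Those level-preserving terms assemble into the associated-graded differential used by the spectral sequence that follows, but that is beyond what the present statement needs; combining nestedness, exhaustion, and $d$-invariance gives that \eqref{eq:filtration} is an increasing filtration.
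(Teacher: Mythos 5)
Your proof is correct and follows essentially the same route as the paper's: you expand the degree constraint $|Q_\op|-|Q_\up\,e^A|=1$ from~\eqref{eq:dMB} using the grading conventions and the fact that the Morse-theoretic contribution of the min/max choice differs by at most $1$, concluding that the filtration level can only drop (by $0$, $1$, or $2$) under $d$. The paper's proof is exactly this computation, with nestedness and exhaustion left implicit as you also note they are trivial.
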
  
 
\begin{proof} The formula~\eqref{eq:dMB} involves elements $Q_\op$, 
  $Q_\up$ satisfying $|Q_\op|-|Q_\up\, e^A|=1$, i.e.  
$$ 
-\mu(\op)+\mathrm{ind}_{f_\op}(Q_\op) + \mu(\up) 
-\mathrm{ind}_{f_\up}(Q_\up) + 2\langle c_1(T\widehat W),A\rangle =1. 
$$ 
Since 
$\mathrm{ind}_{f_\op}(Q_\op)-\mathrm{ind}_{f_\up}(Q_\up)\in\{-1,0,1\}$, 
we obtain that 
$$-\mu(\op)+ \mu(\up)+ 2\langle c_1(T\widehat 
W),A\rangle\in\{0,1,2\}.
$$  
\end{proof}  
 
The differential $d$ splits as  
$$ 
d=d^0+d^1+d^2 
$$ 
with $d^r:B_kC^{a,N}_*(H)\to B_{k-r}C^{a,N}_*(H)$. The complex 
$BC^{a,N}_*(H)$ admits a bi-grading which, for an element $Q_p\, e^A$ 
is $(-\mu(p)-2\langle c_1(T\widehat 
W),A\rangle,\mathrm{ind}_{f_p}(Q_p))$. The associated 
spectral sequence $(E^{a,N;r}_{*,*}(H),\bar d^r)$ is supported in two lines 
and converges to $SH^{a,S^{2N+1}}_*(H)$. In particular, it degenerates 
at $r=2$ and takes the form of a long exact sequence~\cite{BOcont}  
\begin{equation}  \label{eq:seq} 
{ 
\xymatrix 
@C=13pt 
@R=10pt@W=1pt@H=1pt 
{ 
\dots \ar[r] & SH^{a,S^{2N+1}}_k(H) \ar[r] 
 & E^{a,N;2}_{k,0}(H) \ar[r]^{\bar d^2} & E^{a,N;2}_{k-2,1}(H) \ar[r] 
& SH^{a,S^{2N+1}}_{k-1}(H) \ar[r] & \dots 
} 
} 
\end{equation}

\begin{proposition}  \label{prop:d0} 
 The differential 
 $d^0:B_kC^{a,N}_*(H)\to B_kC^{a,N}_*(H)$ vanishes. 
\end{proposition}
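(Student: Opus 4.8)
The plan is to determine exactly which cascade configurations can contribute to $d^0$ and to show that the only survivors are Morse trajectories inside a single critical circle, where the signed count vanishes because $S^1$ admits a perfect Morse function. By definition $d^0$ is the part of $d$ that preserves the filtration degree $-\mu(p)-2\langle c_1(T\widehat W),A\rangle$ of a generator $Q_p\,e^A$; comparing this degree for $Q_\op$ and for a term $Q_\up\,e^A$ occurring in $d^0Q_\op$ (where, as in~\eqref{eq:dMB}, $A$ is the class of the contributing cascade), one gets the constraint
$$
-\mu(\op)+\mu(\up)+2\langle c_1(T\widehat W),A\rangle=0,
$$
which is precisely the ``$r=0$'' alternative of the splitting $d=d^0+d^1+d^2$.

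The key step is to rule out every cascade containing a Floer cylinder. Recall that $\cM^A(Q_\op,Q_\up;H,\{f_p\},J,g)=\bigcup_{m\ge 0}\cM^A_m$, and that for $m\ge 1$ the stratum $\cM^A_m$ is a fibered product whose factors include the moduli spaces $\cM^{A_i}(S_{p_{i-1}},S_{p_i};H,J,g)$, $i=1,\dots,m$, with $p_0:=\op$, $p_m:=\up$ and $A_1+\dots+A_m=A$, in which --- as we emphasized when defining $\cM^A_m$ --- the $S^1$-action has \emph{not} been divided out. Each such factor, if nonempty, is a smooth manifold carrying a \emph{free} $S^1$-action, hence has dimension at least $1$; and by~\eqref{eq:dimMS} its dimension equals $-\mu(p_{i-1})+\mu(p_i)+2\langle c_1(T\widehat W),A_i\rangle$. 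Telescoping, nonemptiness of the cascade would force
$$
-\mu(\op)+\mu(\up)+2\langle c_1(T\widehat W),A\rangle=\sum_{i=1}^m\dim\cM^{A_i}(S_{p_{i-1}},S_{p_i};H,J,g)\ \ge\ m\ \ge\ 1,
$$
contradicting the constraint above. Hence no $m\ge1$ stratum contributes to $d^0$.

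It remains to treat $m=0$. An element of $\cM^A_0(Q_\op,Q_\up;\dots)$ is an unparametrized negative gradient trajectory of $f_\op$ inside the critical circle $S_\op$, which forces $S_\up=S_\op$ and $A=0$; the resulting contribution to $d^0$ is exactly the Morse differential of the perfect Morse function $f_\op\colon S_\op\cong S^1\to\R$. That differential vanishes: the two gradient trajectories from the index-$1$ critical point of $f_\op$ to its index-$0$ critical point are counted with opposite signs, as is forced by $H_*(S^1;\Z)$ having rank $1$ in degrees $0$ and $1$. Together with the previous step this gives $d^0=0$.

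I do not expect a genuine obstacle here: the conceptual input is the dimension estimate coming from the free $S^1$-action on the non-quotiented moduli spaces $\cM^{A_i}(S_{p_{i-1}},S_{p_i})$, which is robust. The only points needing care are bookkeeping --- matching the ``$r=0$'' part of the split differential with the two cases above, and checking that on the $m=0$ stratum the coherent-orientation signs reduce to the standard Morse signs on $S^1$ --- and these follow the orientation conventions fixed in~\cite[\S4.4]{BOauto} together with those of the present section.
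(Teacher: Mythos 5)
Your first step coincides with the paper's: the filtration constraint $-\mu(\op)+\mu(\up)+2\langle c_1(T\widehat W),A\rangle=0$, combined with the dimension formula \eqref{eq:dimMS} and the freeness of the $S^1$-action on the non-quotiented moduli spaces $\cM^{A_i}(S_{p_{i-1}},S_{p_i};H,J,g)$, rules out every cascade containing a Floer cylinder, so $d^0$ reduces to a count of gradient trajectories of $f_p$ inside a single critical circle $S_p$ (in particular $d^0(M_p)=0$ and $d^0(m_p)$ is either $0$ or $\pm 2M_p$). Up to this point your argument is exactly the one in the paper.

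The gap is in the last step. You assert that the two gradient trajectories of $f_\op$ from $m_\op$ to $M_\op$ are counted with opposite signs ``as is forced by $H_*(S^1;\Z)$'', i.e.\ that the coherent-orientation signs reduce to the standard Morse signs on $S^1$, and you file this under bookkeeping. It is not: in the Morse--Bott cascade complex the sign of such a trajectory is dictated by the fibered-sum rule applied to a system of coherent orientations of determinant lines of Fredholm operators, not by the homology of $S^1$, and a priori the two trajectories may carry the \emph{same} sign, yielding $d^0(m_p)=\pm 2M_p$. This is precisely the good/bad-orbit phenomenon familiar from symplectic field theory, and the paper's proof devotes its substantive second half to excluding it: one shows that the analogue of the twisting operator $T$ of~\cite[Lemma~4.28]{BOauto} is constant along each critical circle $S_p$, which holds because the asymptotic operators $D_p$, read in $S^1$-invariant trivializations along $S_p$, depend only on $S_p$ (Remark~\ref{rmk:coherent}). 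Your appeal to $H_*(S^1;\Z)$ is circular in this context --- the whole content of $d^0(m_p)=0$ is that the coherently oriented count reproduces the untwisted Morse count --- so as written the $m=0$ case, which is the heart of the proposition, is not proved.
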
  
 
\begin{proof}
By definition $d^0(Q_p)$, 
  $Q_p\in\mathrm{Crit}(f_p)$ involves critical points of $f_\up$, 
  $\up\in\cP^a(H)$ satisfying $-\mu(p)+\mu(\up) +2\langle 
  c_1(T\widehat W),A\rangle=0$. On the other hand, the dimension of 
  the moduli spaces $\cM^{A_1}(S_{p_1},S_{p_2};H,J,g)$ is equal to 
  $-\mu(p_1)+\mu(p_2)+2\langle c_1(T\widehat W),A_1\rangle$. Since 
  these moduli spaces carry a free $S^1$-action, their dimension must 
  be at least $1$. This proves that $d^0(Q_p)$ counts only gradient 
  trajectories of $f_p$ emanating from $Q_p$. In particular 
  $d^0(M_p)=0$, and $d^0(m_p)$ is either $0$ or equal to $\pm 2 M_p$.  
As explained in Remark~\ref{rmk:coherent} the moduli spaces of $S^1$-equivariant Floer trajectories admit a system of coherent orientations, because the asymptotic operators $D_p$, $p\in\cP(H)$ depend only on $S_p$ when read in $S^1$-invariant trivializations along $S_p$. The arguments of~\cite[Lemma~4.28]{BOauto} then imply that $d^0(m_p)=0$ since the analogue of the twisting operator $T$ used therein is in our case constant for each critical $S^1$-orbit $S_p$. 
\end{proof}  
 
As a consequence, the term $E^{a,N;1}_{*,*}(H)$ can be expressed as  
$$ 
E^{a,N;1}_{*,*}(H) = \bigoplus_{S_p\subset \cP^a(H)} 
\Lambda_\omega \langle m_p,M_p\rangle.  
$$  
Let us denote by $M$ the generator of $H_0(S^1)$ and by $m$ the 
generator of $H_1(S^1)$. It follows from the 
definition~\eqref{eq:SCS1} of the $S^1$-equivariant chain complex that 
there is a natural isomorphism of $\Lambda_\omega$-modules which 
preserves the bi-degree  
$$ 
\Phi:E^{a,N;1}_{*,*}(H) \stackrel \sim \to SC^{a,S^1,N}_*(H) \otimes H_*(S^1), 
$$ 
given by  
$$ 
\Phi(m_p):= S_p\otimes m,\qquad \Phi(M_p):=S_p\otimes M. 
$$ 
 
\begin{proposition} \label{prop:d1} 
There is a commutative diagram  
$$ 
\xymatrix 
@R=25pt 
{E^{a,N;1}_{*,*}(H) \ar[r]^-\Phi \ar[d]_{\bar d^1} & SC^{a,S^1,N}_*(H) 
  \otimes H_*(S^1) \ar[d]^{\partial^{S^1}\otimes \mathrm{Id}} \\ 
E^{a,N;1}_{*,*}(H) \ar[r]_-\Phi & SC^{a,S^1,N}_*(H) 
  \otimes H_*(S^1) 
} 
$$ 
\end{proposition}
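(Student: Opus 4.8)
The plan is to identify $\bar d^1$ with the part of the Morse--Bott differential $d$ that drops the filtration index by exactly $1$ (legitimate because $d^0=0$ by Proposition~\ref{prop:d0}, so $E^{a,N;1}_{*,*}(H)=BC^{a,N}_*(H)$ as a $\Lambda_\om$-module), and then to match it coefficient by coefficient with $\partial^{S^1}\otimes\mathrm{Id}$. First I would rerun the index bookkeeping used to show that $F_\bullet$ is a filtration: a contribution of $d$ to $\langle dQ_\op,Q_\up\,e^A\rangle$ obeys $|Q_\op|-|Q_\up\,e^A|=1$, while lying in $d^1$ imposes $-\mu(\op)+\mu(\up)+2\langle c_1(T\widehat W),A\rangle=1$; comparing the two relations and using the grading formulas for $m_p\,e^A,M_p\,e^A$ one gets $\ind_{f_\op}(Q_\op)=\ind_{f_\up}(Q_\up)$. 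Hence $\bar d^1$ preserves the splitting $BC^{a,N}_*(H)=\bigl(\bigoplus_p\Lambda_\om\langle m_p\rangle\bigr)\oplus\bigl(\bigoplus_p\Lambda_\om\langle M_p\rangle\bigr)$, which under $\Phi$ is exactly the splitting $SC^{a,S^1,N}_*(H)\otimes\langle m\rangle\ \oplus\ SC^{a,S^1,N}_*(H)\otimes\langle M\rangle$. Moreover the cascade configurations contributing to $d^1$ have exactly one Floer factor $\cM^{A}(S_\op,S_\up)$: the dimensions of the factors $\cM^{A_i}(S_{p_{i-1}},S_{p_i})$ of an $m$-cascade sum to $-\mu(\op)+\mu(\up)+2\langle c_1(T\widehat W),A\rangle=1$, and each (necessarily nonempty) factor carries a free $S^1$-action, hence is at least $1$-dimensional, forcing $m=1$.

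It then remains, for fixed $\op,\up$ and $A$ with $-\mu(\op)+\mu(\up)+2\langle c_1(T\widehat W),A\rangle=1$, to compare $\langle\bar d^1m_\op,m_\up\,e^A\rangle$ and $\langle\bar d^1M_\op,M_\up\,e^A\rangle$ with $\langle\partial^{S^1}S_\op,S_\up\,e^A\rangle$. In all three cases the relevant space is $\cM^{A}(S_\op,S_\up)$, which is one-dimensional with free $S^1$-action, hence a disjoint union of $S^1$-orbit circles projecting bijectively onto the finite set $\cM^A_{S^1}(S_\op,S_\up)$ counted by $\partial^{S^1}$. For $\bar d^1m_\op$ the contributing configurations are the $\ell\in\cM^{A}(S_\op,S_\up)$ with $\oev(\ell)=m_\op$ and $\uev(\ell)\in W^s(m_\up)=S_\up\setminus\{M_\up\}$; since $\oev$ is $S^1$-equivariant it restricts to a diffeomorphism on each orbit, so $\oev^{-1}(m_\op)$ maps bijectively onto $\cM^A_{S^1}(S_\op,S_\up)$, and for a generic choice of the auxiliary functions $f_p$ (chosen after $(H,J,g)$) the remaining open condition $\uev(\ell)\neq M_\up$ holds at each of these finitely many points. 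The argument for $\bar d^1M_\op$ is symmetric, with $\uev^{-1}(M_\up)$ mapping bijectively onto $\cM^A_{S^1}(S_\op,S_\up)$ and the open condition $\oev(\ell)\in W^u(M_\op)=S_\op\setminus\{m_\op\}$. Thus the unsigned counts agree.

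The hard part will be the sign comparison. The sign of a configuration in $\bar d^1 m_\op$ is produced by the fibered-sum rule of~\cite[Section~4.4]{BOauto} applied to $W^u(m_\op)\times_{\oev}\cM^{A}(S_\op,S_\up){}_{\uev}\!\times W^s(m_\up)$; since $W^u(m_\op)=\{m_\op\}$ is a point, this rule reduces to comparing the coherent orientation of the one-manifold $\cM^{A}(S_\op,S_\up)$ at $\ell$ with the orientation obtained by pulling back along the local diffeomorphism $\oev$ the orientation of $S_\op$ induced by $S^1=\R/\Z$. By $S^1$-equivariance of $\oev$ this pulled-back orientation is precisely the one spanned by the infinitesimal generator of the $S^1$-action on $\cM^{A}(S_\op,S_\up)$, so the sign coincides with the sign $\epsilon([\ell])$ entering the definition of $\partial^{S^1}$. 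For $\bar d^1 M_\op$ the same reduction applies with $\uev$ and $W^s(M_\up)=\{M_\up\}$. This step is parallel to the sign discussion in~\cite[Section~4.4 and Lemma~4.28]{BOauto}, the only difference being that the $S^1$-families of Hamiltonian orbits there are replaced by the $S^1$-orbits $S_p$ of critical points of $\cA$, along which, by Remark~\ref{rmk:coherent}, the asymptotic operators are constant in $S^1$-invariant trivializations. Combining the three steps yields $\Phi\circ\bar d^1=(\partial^{S^1}\otimes\mathrm{Id})\circ\Phi$, which is the claimed commutativity.
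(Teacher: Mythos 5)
Your proposal is correct and follows essentially the same route as the paper's proof: after noting $d^0=0$, the filtration/index bookkeeping forces exactly one Floer factor (each factor being at least $1$-dimensional due to the free $S^1$-action) and equal Morse indices at the two ends, and the coefficient is then identified with a signed count of connected components of $\cM^A(S_\op,S_\up;H,J,g)$, i.e.\ of elements of $\cM^A_{S^1}(S_\op,S_\up;H,J,g)$, with signs matching by the orientation convention for the quotient. Your added details (genericity of $\{f_p\}$ to keep $\uev(\ell)$ away from $M_\up$, and the explicit reduction of the fibered-sum sign rule to comparing the coherent orientation with the infinitesimal generator via the equivariant evaluation map) just flesh out steps the paper states more briefly.
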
  
 
\begin{proof} By definition $\bar d^1(Q_\op)$ involves critical points 
  of $f_\up$, $\up\in\cP^a(H)$ such that 
  $-\mu(\op)+\mu(\up)+2\langle c_1(T\widehat W),A\rangle=1$. It 
  follows from the dimension formula~\eqref{eq:dimMS} that 
  $\cM^A(Q_\op,Q_\up;H,\{f_p\},J,g)$ involves exactly one parametrized 
  Floer trajectory $u_1\in\cM^A(S_\op,S_\up;H,J,g)$. Since the 
  dimension of the moduli space $\cM^A(Q_\op,Q_\up;H,\{f_p\},J,g)$ is 
  zero, it follows that either $\oev(u_1)=M_\op$ and $Q_\up=M_\up$, or 
  $\uev(u_1)=m_\up$ and $Q_\op=m_\op$.  
 
  Using that the $S^1$-action on $\cM^A(S_\op,S_\up;H,J,g)$ is free, 
  we see that the coefficient of $Q_\up\, e^A$ in $\bar d^1(Q_\op)$ is 
  given by an algebraic count of connected components of 
  $\cM^A(S_\op,S_\up;H,J,g)$. The latter are in bijective 
  correspondence with elements of 
  $\cM^A_{S^1}(S_\op,S_\up;H,J,g)$, and the signs are the same  
  by our convention for orienting the latter moduli space. Thus, the 
  coefficient of $Q_\up\, e^A$ in $\bar d^1(Q_\op)$ is equal to the 
  coefficient of $S_\up\, e^A$ in $\partial^{S^1}(S_\op)$. This proves 
  the Proposition.   
\end{proof}  
 
\begin{proof}[Proof of Proposition~\ref{prop:partialS1}]  
The claim $\partial^{S^1} \!\!\circ \, \partial^{S^1}=0$ follows directly 
from Proposition~\ref{prop:d1}, using that $\bar d^1 \circ \bar 
d^1=0$. 
\end{proof}  
 
\begin{remark}
The relation $\partial^{S^1} \!\!\circ \, \partial^{S^1}=0$ can also be proved using the usual compactness/gluing argument in Floer homology, the main point being that we are in an $S^1$-invariant, yet transverse, situation. The difference with respect to usual Floer theory is that we are dealing with Morse-Bott asymptotes, for which the relevant analysis has been carried out in~\cite{BOauto}. For compactness, we use that a $2$-dimensional $S^1$-invariant family, which we view as a $1$-dimensional family modulo $S^1$, degenerates into a pair of trajectories with a common asymptote, together with their simultaneous translates by the $S^1$-action. The gluing analysis is also similar to the one in Floer theory, except that it has to be carried out invariantly with respect to the $S^1$-action. 
\end{remark} 
 
It follows from Proposition~\ref{prop:d1} that $\Phi$ induces an 
isomorphism which respects the bi-degree 
\begin{equation} \label{eq:barPhi} 
\bar \Phi : E^{a,N;2}_{*,*}(H) \stackrel \sim \to SH^{a,S^1,N}_*(H) 
  \otimes H_*(S^1). 
\end{equation}

We are now ready to prove Theorem~\ref{thm:SGysin}. We need two  
preparatory Lemmas.  
 
\begin{lemma} \label{lem:limit}  
 We have $\lim_{N\to \infty} SH_*^{S^{2N+1}}(W) = SH_*(W)$ in 
 each degree. 
\end{lemma}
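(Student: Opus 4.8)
The plan is to reduce Lemma~\ref{lem:limit} to the K\"unneth formula (Proposition~\ref{prop:Kunneth}) together with the naturality statement of Remark~\ref{rmk:SHincl}, and then to pass to the colimit by an elementary algebraic computation. First I would compute $SH^{S^{2N+1}}_*(W)$ explicitly. As in the proof of Proposition~\ref{prop:Kunneth}, I would use a split admissible Hamiltonian $H_\lambda(\theta,x)=K(\theta,x)+f(\lambda)$ on $S^1\times\widehat W\times S^{2N+1}$, where $f\colon S^{2N+1}\to\R$ is a Morse function with exactly two critical points and $K$ is admissible with nondegenerate orbits, together with the split data $(J,g)$ used there. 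For $N\ge 1$ the two Morse indices $0$ and $2N+1$ are non-adjacent, so the Morse complex $C_*(-f,g)$ has vanishing differential and is $\Z$ concentrated in degrees $0$ and $2N+1$; hence the chain-level isomorphism of Proposition~\ref{prop:Kunneth} becomes $SC^{0,S^{2N+1}}_*(H,J,g)\simeq SC_*(K,J)\oplus SC_{*-2N-1}(K,J)$, with the Floer differential $\partial$ acting on each summand. Because $C_*(-f,g)$ is literally a direct sum of two copies of $\Z$ with zero differential, the general field-coefficient K\"unneth theorem is not needed here and the splitting descends verbatim to homology over the Novikov ring: $SH^{0,S^{2N+1}}_*(H,J,g)\simeq SH_*(K,J)\oplus SH_{*-2N-1}(K,J)$. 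Passing to the direct limit over increasing homotopies of $K$ gives a natural isomorphism $SH^{S^{2N+1}}_*(W)\simeq SH_*(W)\oplus SH_{*-2N-1}(W)$, i.e. the K\"unneth isomorphism $SH_*(W)\otimes H_*(S^{2N+1})$.

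Next I would identify the maps of the direct system, which is indexed by the equatorial inclusions $\iota\colon S^{2N+1}\hookrightarrow S^{2N+3}$. By Remark~\ref{rmk:SHincl}, under the K\"unneth identification above the induced map $S\iota_*$ equals $\mathrm{Id}\otimes\iota_*$, where $\iota_*\colon H_*(S^{2N+1})\to H_*(S^{2N+3})$ is the map on homology; concretely one uses the Hamiltonian $K(\theta,x)+\tf(\lambda)$ on $S^1\times\widehat W\times S^{2N+3}$, where $\tf=f-|z|^2$ in a tubular neighbourhood of $S^{2N+1}$, as in that remark. Since $\iota_*$ is an isomorphism in degree $0$ and vanishes in degree $2N+1$ --- as $H_{2N+1}(S^{2N+3})=0$ --- the structure map $SH_*(W)\oplus SH_{*-2N-1}(W)\to SH_*(W)\oplus SH_{*-2N-3}(W)$ is the identity on the $SH_*(W)$ summand and zero on the $SH_{*-2N-1}(W)$ summand. (Note that this does not require $SH_*(W)$ to be bounded below: the second summand is killed after a single step regardless of whether it vanishes.)

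Finally I would take the colimit in each fixed degree $k$. The system $\cdots\to SH_k(W)\oplus SH_{k-2N-1}(W)\to SH_k(W)\oplus SH_{k-2N-3}(W)\to\cdots$ has all maps of the form $\mathrm{Id}\oplus 0$, and the composition of any two consecutive maps annihilates the second summand, so the colimit is $SH_k(W)$, the structure maps being the projections onto the first summand; equivalently, tensor products commute with direct limits and $\lim_N H_*(S^{2N+1})\simeq\Z$ concentrated in degree $0$ (the homology of a point, reflecting $S^\infty\simeq\ast$). This gives $\lim_{N\to\infty}SH^{S^{2N+1}}_k(W)=SH_k(W)$ in every degree $k$, naturally. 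The argument is essentially formal once the two preceding steps are in place; the point deserving attention --- and where the only real content lies --- is the second one, namely the compatibility as $N$ varies of the K\"unneth isomorphisms with the naturality map $S\iota_*$, i.e. that the split Hamiltonians chosen on $S^{2N+1}$ and on $S^{2N+3}$ can be matched as prescribed in Remark~\ref{rmk:SHincl}. Since this is exactly what that remark supplies, no work beyond careful bookkeeping is required.
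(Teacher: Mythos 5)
Your proposal is correct and follows essentially the same route as the paper: the K\"unneth isomorphism of Proposition~\ref{prop:Kunneth} identifies $SH_*^{S^{2N+1}}(W)$ with $SH_*(W)\otimes H_*(S^{2N+1})$, Remark~\ref{rmk:SHincl} identifies the structure maps $S\iota_*$ with $\mathrm{Id}\otimes\iota_*$, and the colimit computation is then formal. Your additional observation that a perfect Morse function on $S^{2N+1}$ gives a chain-level splitting with zero differential, so that the field-coefficient hypothesis of the K\"unneth proposition is not actually needed here, is a sensible refinement but does not change the argument.
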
  
 
\begin{proof} The limit $\lim_{N\to \infty} SH_*^{S^{2N+1}}(W)$ is 
 taken with respect to the maps $S\iota_*$ corresponding to the 
 inclusions $\iota:S^{2N+1}\hookrightarrow S^{2N+3}$ as in 
 Remark~\ref{rmk:SHincl}. Moreover, we saw that, modulo the K\"unneth 
 isomorphism $SH_*^{S^{2N+1}}(W)\simeq SH_*(W)\otimes H_*(S^{2N+1})$ 
 proved in Proposition~\ref{prop:Kunneth}, the map $S\iota_*$ is equal to 
 $\mathrm{Id}\otimes \iota_*$. The conclusion follows. 
\end{proof}  
 
\begin{lemma} \label{lem:continuation}  
 Let $H_s$ be a smooth increasing homotopy from 
 $H_0\in\cH^{S^1}_{N,\reg}$ to $H_1 \in\cH^{S^1}_{N,\reg}$. Let 
 $(J_i,g_i)\in\cJ^{S^1}_{N,\reg}(H_i)$, $i=0,1$ and $(J_s,g_s)$ a 
 regular smooth homotopy in $\cJ^{S^1}_N$ from $(J_0,g_0)$ to 
 $(J_1,g_1)$. The induced chain morphism $BC^{a,S^1,N}_*(H_0,J_0,g_0)\to 
 BC^{a,S^1,N}_*(H_1,J_1,g_1)$ respects the 
 filtrations~\eqref{eq:filtration}.  
\end{lemma}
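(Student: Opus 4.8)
The plan is to transpose to the continuation morphism $\Psi:BC^{a,N}_*(H_0,J_0,g_0)\to BC^{a,N}_*(H_1,J_1,g_1)$ the Morse--Bott bookkeeping that already governs the differential $d=d^0+d^1+d^2$ on $BC^{a,N}_*(H)$ in Section~\ref{sec:MBparam}, and in particular the mechanism behind the vanishing of $d^0$ (Proposition~\ref{prop:d0}). Recall that, exactly as the differential, $\Psi$ is built by a cascade construction as in~\cite[\S3]{BOauto}: a matrix coefficient of $\Psi$ from $Q_\op$ to $Q_\up\,e^A$ is an algebraic count of rigid configurations made of one continuation cylinder for the $s$-dependent data $(H_s,J_s,g_s)$, flanked on the left by finitely many Floer cylinders for $(H_0,J_0)$ and on the right by finitely many Floer cylinders for $(H_1,J_1)$, glued along gradient segments of the auxiliary Morse functions $f_p$ on the critical $S^1$-orbits $S_p$. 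As the homotopy is taken through $S^1$-invariant data, every cylinder moduli space occurring here carries an $S^1$-action, and this action is \emph{free} because the $S^1$-action on the $S^{2N+1}$-factor is free.

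First I would record the index count. The rigid cascade configurations contributing to the coefficient of $Q_\up\,e^A$ in $\Psi(Q_\op)$ form a $0$-dimensional moduli space whose expected dimension equals $-\mu(\op)+\mathrm{ind}_{f_\op}(Q_\op)+\mu(\up)-\mathrm{ind}_{f_\up}(Q_\up)+2\langle c_1(T\widehat W),A\rangle$, one more than the count appearing in the parametrized Morse--Bott differential of Section~\ref{sec:MBparam}, since the continuation cylinder carries no $\R$-reparametrisation to be quotiented out. Setting this expression to $0$ and using that $\mathrm{ind}_{f_\op}(Q_\op)-\mathrm{ind}_{f_\up}(Q_\up)\in\{-1,0,1\}$, each $f_p$ being a perfect Morse function on a circle, we get $-\mu(\op)+\mu(\up)+2\langle c_1(T\widehat W),A\rangle\in\{-1,0,1\}$.

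Next I would pass to filtration degrees. A generator $Q_\op\,e^{A_0}$ lies in $B_kC^{a,N}_*(H_0)$ for $k=-\mu(\op)-2\langle c_1(T\widehat W),A_0\rangle$, and $\Psi$ is $\Lambda_\omega$-linear, so it is enough to treat $A_0=0$; then $\Psi$ respects the filtration~\eqref{eq:filtration} precisely when every contributing configuration satisfies $-\mu(\up)-2\langle c_1(T\widehat W),A\rangle\le-\mu(\op)$, that is, $-\mu(\op)+\mu(\up)+2\langle c_1(T\widehat W),A\rangle\ge0$. By the previous step this amounts to excluding the value $-1$, which I would do exactly as for $d^0$ in Proposition~\ref{prop:d0}: in a contributing configuration the quantity $-\mu(\op)+\mu(\up)+2\langle c_1(T\widehat W),A\rangle$ is the sum, over the cylinders of the configuration, of the numbers $-\mu(p_{i-1})+\mu(p_i)+2\langle c_1(T\widehat W),A_i\rangle$, since the interposed $f_p$-gradient segments leave $\mu$ unchanged and have endpoints on the same circle. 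Each such number is non-negative: for a Floer cylinder the corresponding moduli space has dimension one less and, carrying a free $S^1$-action, has dimension $\ge1$, so the number is $\ge2$; for the continuation cylinder the corresponding moduli space carries a free $S^1$-action, hence is empty whenever $0$-dimensional, so the number is $\ge0$. Therefore $-\mu(\op)+\mu(\up)+2\langle c_1(T\widehat W),A\rangle\ge0$ in all cases, the value $-1$ is excluded, and $\Psi$ respects the filtrations.

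The compactness and gluing underlying the definition of $\Psi$ are imported from~\cite{BOauto} just as for the differential, so they are routine here, and the index computation is standard. The genuine point, and the step I would be most careful about, is the freeness argument: one must make sure that in the cascade the continuation cylinder really contributes the non-negative quantity $-\mu(\op)+\mu(\up)+2\langle c_1(T\widehat W),A\rangle\ge0$, so that the a priori filtration-raising component of $\Psi$ is forced to count an empty moduli space. This is precisely the analogue of $d^0=0$, and it rests on the same input --- the freeness of the $S^1$-action on $S^{2N+1}$.
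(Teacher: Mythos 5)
Your argument is essentially the paper's proof: the paper likewise observes that every constituent moduli space of a contributing cascade (the $s$-dependent continuation cylinder as well as the Floer cylinders for the two end Hamiltonians) carries a free $S^1$-action, hence has dimension at least $1$ when nonempty, and then the dimension formulas~\eqref{eq:dimMS} and~\eqref{eq:dim-MB-s} yield $-\mu(\op)+\mu(\up)+2\langle c_1(T\widehat W),A\rangle\ge 0$, which is exactly your telescoping estimate. The only slip is your claim that each Floer cylinder contributes at least $2$: by~\eqref{eq:dimMS} its moduli space (quotiented by $\R$ but not by $S^1$) has dimension equal to that contribution, so the correct bound is $\ge 1$, which is all your argument needs.
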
  
 
The proof of Lemma~\ref{lem:continuation} is given in 
Section~\ref{sec:continuation} below. 
Note that the increasing assumption in the Lemma ensures that the action filtration is respected too.
 
\begin{proof}[Proof of Theorem~\ref{thm:SGysin}] 
 Using the isomorphism $\bar \Phi$ in~\eqref{eq:barPhi}, the long 
 exact sequence~\eqref{eq:seq} becomes 
$$ 
... \to SH^{a,S^{2N+1}}_k\!(H) \to SH^{a,S^1,N}_k\!(H) \to 
SH^{a,S^1,N}_{k-2}\!(H) \to SH^{a,S^{2N+1}}_{k-1}\!(H) \to ... 
$$ 
By Lemma~\ref{lem:continuation}, a smooth increasing homotopy of 
Hamiltonian families in $\cH^{S^1}_{N,\reg}$ induces a filtered chain 
morphism, and therefore a commutative diagram of exact sequences.  
Passing to the direct limit over $H\in\cH^{S^1}_{N,\reg}$ and using 
that the direct limit functor is exact, we obtain a long exact 
sequence  
$$ 
... \to SH^{a,S^{2N+1}}_k\!(W) \to SH^{a,S^1,N}_k\!(W) \to 
SH^{a,S^1,N}_{k-2}\!(W) \to SH^{a,S^{2N+1}}_{k-1}\!(W) \to ... 
$$ 
Passing further to the direct limit over $N\to\infty$, and using 
Lemma~\ref{lem:limit}, we obtain  
$$ 
... \to SH^a_k(W) \to SH^{a,S^1}_k(W) \to 
SH^{a,S^1}_{k-2}(W) \to SH^a_{k-1}(W) \to ... 
$$ 
\end{proof}

\subsection{The Gysin sequence and the cone construction} \label{sec:Gysin-cone} 
 
In this section we relate the Gysin sequence to the cone construction in homological algebra, and then we prove Theorem~\ref{thm:grid}. 

We first recall the definition of the cone of a chain morphism. 
Let $(A_*,\p_A)$ be a homological chain complex and denote $A[k]_*:=(A_{*+k},(-1)^k\p_A)$ for $k\in\Z$.
Given a degree $0$ chain map $f:(A_*,\p_A)\to (A'_*,\p_{A'})$, so that $f\p_A-\p_{A'}f=0$, we 
define the {\bf cone of \boldmath$f$} as the chain complex  
$$ 
\mathcal{C}(f)_*:=A'[1]_*\oplus A_* = A'_{*+1}\oplus A_*, 
$$ 
with differential $\p$ given in matrix form by  
$$ 
\p:=\left(\begin{array}{cc} \p_{A'[1]} & f \\ 0 & \p_A \end{array}\right)=\left(\begin{array}{cc} -\p_{A'} & f \\ 0 & \p_A \end{array}\right). 
$$ 
There is a short exact sequence of complexes  
\begin{equation} \label{eq:short}  
\xymatrix{0\ar[r] & A'[1]_* \ar[r]^i &  \mathcal{C}(f)_* \ar[r]^p & 
  A_* \ar[r] & 0, 
} 
\end{equation}  
with $i$, $p$ the obvious inclusion, respectively projection. The main 
property of the cone construction is that the connecting homomorphism 
in the homology long exact sequence associated to~\eqref{eq:short} is 
precisely $f_*:H_*(A)\to H_*(A')=H_{*-1}(A'[1])$.  
 
\begin{lemma} \label{lem:grid} 
 Let  
\begin{equation} \label{eq:fgh} 
\xymatrix{ 
0\ar[r] & A_* \ar[r] \ar[d]^f & B_* \ar[r] \ar[d]^g & C_* \ar[r] 
\ar[d]^h & 0 \\ 
0\ar[r] & A'_* \ar[r]  & B'_* \ar[r]  & C'_* \ar[r] 
 & 0 
} 
\end{equation}  
be a morphism of short exact sequences of complexes. This induces the 
commutative diagram of homological long exact sequences 
\begin{equation} \label{eq:big-grid} 
\xymatrix 
@C=15pt 
@R=20pt 
{ 
 & \vdots \ar[d] & \vdots \ar[d] & \vdots \ar[d] & \vdots \ar[d] & \\ 
\cdots \ar[r] & H_*(A'[1]) \ar[r] \ar[d] & H_*(B'[1]) \ar[r] \ar[d] 
& H_*(C'[1]) \ar[r] \ar[d] & H_{*-1}(A'[1]) \ar[r] \ar[d] & \cdots \\ 
\cdots \ar[r] & H_*(\mathcal{C}(f)) \ar[r] \ar[d] & 
H_*(\mathcal{C}(g)) \ar[r] \ar[d] & H_*(\mathcal{C}(h)) 
\ar[r] \ar[d] & H_{*-1}(\mathcal{C}(f)) \ar[r] \ar[d] & \cdots \\ 
\cdots \ar[r] & H_*(A) \ar[r] \ar[d]^{f_*} & H_*(B) \ar[r] 
\ar[d]^{g_*} & H_*(C) \ar[r] \ar[d]^{h_*} & H_{*-1}(A) \ar[r] 
\ar[d]^{f_*} & \cdots \\  
\cdots \ar[r] & H_{*-1}(A'[1]) \ar[r] \ar[d] & H_{*-1}(B'[1]) \ar[r] \ar[d] 
& H_{*-1}(C'[1]) \ar[r] \ar[d] & H_{*-2}(A'[1]) \ar[r] \ar[d] & \cdots \\ 
& \vdots & \vdots & \vdots & \vdots &  
} 
\end{equation}  
in which the bottom right square 
$$
\xymatrix
@C=20pt 
@R=20pt 
{
H_*(C)\ar[r]^-{\p_B} \ar[d]^{h_*} & H_{*-1}(A) \ar[d]^{f_*} \\
H_*(C') \ar[r]_-{\p_{B'[1]}} & H_{*-1}(A')
}
$$
anti-commutes, i.e. $f_*\p_B+\p_{B'[1]}h_*=0$.
\end{lemma}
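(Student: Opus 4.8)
The plan is to reduce the anti-commutativity of the bottom-right square to two standard facts: the naturality of the connecting homomorphism under a morphism of short exact sequences of complexes, and the sign produced by the shift functor $[\,\cdot\,]$. Write $\alpha,\beta$ for the horizontal arrows of the top row of \eqref{eq:fgh} and $\alpha',\beta'$ for those of the bottom row. Since \eqref{eq:fgh} is a morphism of short exact sequences, naturality of the connecting homomorphism (the Snake Lemma) for the rows $0\to A_*\to B_*\to C_*\to 0$ and $0\to A'_*\to B'_*\to C'_*\to 0$ yields a strictly commuting square
\[
f_*\circ\partial_B \;=\; \partial_{B'}\circ h_*\colon H_*(C)\longrightarrow H_{*-1}(A'),
\]
where $\partial_{B'}$ denotes the connecting homomorphism of the \emph{unshifted} lower sequence. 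Here I use the identification, recorded just after \eqref{eq:short}, that the vertical map $f_*$ occurring in \eqref{eq:big-grid} is precisely the connecting homomorphism of the left-column cone sequence \eqref{eq:short} attached to $f$, viewed as a map $H_*(A)\to H_{*-1}(A'[1])=H_*(A')$.

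Next I would compare $\partial_{B'}$ with the map $\partial_{B'[1]}$ that actually appears along the bottom row of \eqref{eq:big-grid}, which by construction is the connecting homomorphism of the \emph{shifted} sequence $0\to A'[1]_*\to B'[1]_*\to C'[1]_*\to 0$, whose differentials carry an extra factor $-1$. Chasing the defining recipe for the connecting homomorphism — lift along $B'[1]\to C'[1]$, apply the differential, pull back along $A'[1]\to B'[1]$ — one sees that this single sign propagates to the output, so that $\partial_{B'[1]}=-\partial_{B'}$ under the canonical isomorphisms $H_k(X[1])\cong H_{k+1}(X)$. Combining the two steps gives $f_*\partial_B=\partial_{B'}h_*=-\partial_{B'[1]}h_*$, i.e.\ $f_*\partial_B+\partial_{B'[1]}h_*=0$, which is the assertion.

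I would also indicate why the sign is confined to this single corner, so that all other squares of \eqref{eq:big-grid} commute strictly. The middle row $0\to\mathcal{C}(f)_*\to\mathcal{C}(g)_*\to\mathcal{C}(h)_*\to 0$ is short exact precisely because $f,g,h$ are chain maps commuting with $\alpha,\beta$ in \eqref{eq:fgh}; the three cone sequences \eqref{eq:short} realize the top row of complexes $0\to A'[1]_*\to B'[1]_*\to C'[1]_*\to 0$ as its subcomplex and the bottom row of complexes $0\to A_*\to B_*\to C_*\to 0$ as its quotient. Every remaining square of \eqref{eq:big-grid} is then an instance of naturality of a connecting homomorphism along one of these morphisms of short exact sequences, or of the plain functoriality of homology, neither of which introduces a sign; only the bottom-right square compares the two connecting homomorphisms running in the two different directions of the grid, which is where the convention $A[k]_*=(A_{*+k},(-1)^k\partial_A)$ forces the sign.

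Alternatively, the whole statement follows from one self-contained diagram chase: take a cycle $c$ with $[c]\in H_*(C)$, lift it along $\beta$ to $b\in B_*$ and choose $a\in A_{*-1}$ with $\alpha(a)=\partial_B b$, so that $f_*\partial_B[c]=[f(a)]$; independently lift $h(c)$ along $\beta'$ to $b'\in B'_*$ and choose $a'\in A'_{*-1}$ with $\alpha'(a')=-\partial_{B'}b'$, so that $\partial_{B'[1]}h_*[c]=[a']$; then $g(b)-b'\in\ker\beta'=\operatorname{im}\alpha'$ lets one write $b'=g(b)-\alpha'(a'')$, and $\partial_{B'}b'=\alpha'\bigl(f(a)-\partial_{A'}a''\bigr)$ forces $a'=-f(a)+\partial_{A'}a''$, hence $[f(a)]+[a']=0$. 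I expect the only genuine difficulty to be the consistent handling of signs — in particular pinning down the shift sign in the second step — since every other ingredient is purely formal.
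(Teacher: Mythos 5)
Your proposal is correct and follows essentially the same route as the paper: the grid is assembled from the $3\times 3$ diagram of cone sequences, the commuting squares come from functoriality of the homology long exact sequence (naturality of the connecting homomorphism) applied to~\eqref{eq:cone-fgh} and~\eqref{eq:fgh}, and the anti-commutativity of the distinguished square is exactly the combination $f_*\p_B=\p_{B'}h_*$ together with the sign identity $\p_{B'[1]}=-\p_{B'}$ forced by the convention $A[1]_*=(A_{*+1},-\p_A)$. Your explicit diagram chase is a correct, self-contained verification of the same sign computation that the paper leaves terse.
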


\begin{proof} 
Applying the cone construction to each column of~\eqref{eq:fgh} we 
obtain the short exact sequence of short exact sequences of complexes  
\begin{equation} \label{eq:cone-fgh} 
\xymatrix 
@C=25pt 
@R=15pt 
{ 
& 0 \ar[d] & 0 \ar[d] & 0 \ar[d] & \\ 
0 \ar[r] & A'[1]_* \ar[d] \ar[r] & B'[1]_* \ar[d] \ar[r] & C'[1]_* 
\ar[d] \ar[r] & 0 \\ 
0 \ar[r] & \mathcal{C}(f)_* \ar[d] \ar[r] & \mathcal{C}(g)_* \ar[d] 
\ar[r] & \mathcal{C}(h)_* \ar[d] \ar[r] & 0 \\ 
0 \ar[r] & A_* \ar[d] \ar[r] & B_* \ar[d] \ar[r] & C_* 
\ar[d] \ar[r] & 0 \\ 
& 0 & 0 & 0 &  
} 
\end{equation}  
The lines/columns in~\eqref{eq:big-grid} are obtained as homological long exact 
sequences associated to the horizontal/vertical short exact sequences  
in~\eqref{eq:cone-fgh}. Commutativity of the horizontal strips 
in~\eqref{eq:big-grid} follows from functoriality of the homological 
long exact sequence with respect to morphisms of short exact sequences 
of complexes. More precisely, for the first two horizontal strips 
in~\eqref{eq:big-grid} we use~\eqref{eq:cone-fgh}, and for the third 
horizontal strip in~\eqref{eq:big-grid} we use~\eqref{eq:fgh}. That the square under consideration is anti-commutative is a consequence of the identity $\p_{B'[1]}=-\p_{B'}$.
\end{proof}

In the next Lemma we interpret the homology long exact sequence of a cone as a Gysin long exact sequence arising from a spectral sequence supported on two lines. This fact is certainly folk knowledge, but we were unable to find a suitable reference. One can view it as an algebraic reformulation of Thom's interpretation of the Gysin sequence in~\cite{Thom} (see also~\cite[p.~1192]{Malm}). 

Let $(A_*,\p_A)$, $(A'_*,\p_{A'})$ be homological chain complexes and $f:A_* \to A'[-2]_*$ a degree $0$ chain map, i.e. $f\p_A=\p_{A'[-2]}f$. (Thus $f:A_*\to A'_{*-2}$ is a degree $-2$ chain map such that $f\p_A=\p_{A'}f$.) 
We consider the following two algebraic constructions.

(i) Let $\cC(f):=(A'[-1]\oplus A,\p)$ be the cone of $f$. The short exact sequence~\eqref{eq:short} 
$$
0\to A'[-1]_*\stackrel i \longrightarrow A'[-1]_*\oplus A_*\stackrel p \longrightarrow A_*\to 0
$$
induces the {\bf homological long exact sequence of the cone}
\begin{equation} \label{eq:alg-cone}
\dots H_{*-1}(A')\stackrel {i_*}\to H_*(\cC(f))\stackrel {p_*}\to H_*(A)\stackrel {f_*}\to H_{*-2}(A')\stackrel{i_*}\to H_{*-1}(\cC(f))\dots,
\end{equation}
with $i$, $p$ being the inclusion, resp. projection.
 
(ii) Let $C_{*,*}(f):=\{(C_{p,q},d), \, p,q\ge 0\}$ be the first quadrant double complex supported on the lines $q=0,1$ and defined by
$$
C_{*,0}:=A_*, \qquad C_{*,1}:=A'_*, \qquad d:=d^1+d^2, 
$$
with $d^r:C_{p,q}\to C_{p-r,q+r-1}$, $r=1,2$ and 
$$
\qquad d^1|_{C_{*,0}}=\p_A,\qquad d^1|_{C_{*,1}}:=-\p_{A'},\qquad d^2|_{C_{*,0}}:=f.
$$
In particular $(d^1)^2=0$ and $d^1d^2+d^2d^1=0$.

$$
\xymatrix
@R=5pt { 
\mbox{\tiny$q=1$}  &  \bullet  &  \bullet   &  \bullet \ar[l]_{-\p_{A'}} &
\bullet  &  \bullet & \ldots & \\ 
\qquad \\ 
\mbox{\tiny$q=0$}  &  \bullet  &  \bullet   &  \bullet &  \bullet \ar[uull]^(.65){f}
&  \bullet \ar[uull]_(.35){f} \ar[l]^(.40){\p_A} & \ldots & \\ 
\qquad 
}
$$
We consider on the total complex $C^{tot}(f)_*$ the filtration 
$$
F_pC^{tot}_*:=\oplus_{\ell\le p} \oplus_q C_{\ell,q}
$$
and denote the associated spectral sequence by $(E^r_{p,q},\bar d^r)$, $r\ge 0$. This converges to the homology $H_*:=H_*(C^{tot}(f))$ of the total complex and it degenerates at the third page for dimensional reasons, yielding the exact sequences 
$$
0\to E^3_{p,0}\to E^2_{p,0}\stackrel{\bar d^2}\longrightarrow E^2_{p-2,1}\to E^3_{p-2,1}\to 0
$$
and 
$$
0\to E^3_{p-1,1}\to H_p\to E^3_{p,0}\to 0.
$$
These can be assembled according to the diagram
$$
\xymatrix
@R=5pt
@C=15pt{
\dots \ar[dr] \ar@{.>}[rr]^I & & H_p \ar[dr] \ar@{.>}[rr]^P & & E^2_{p,0} \ar[r]^{\bar d^2} & E^2_{p-2,1} \ar[dr] \ar@{.>}[rr]^I & & H_{p-1} \dots \\
& E^3_{p-1,1} \ar[ur] \ar[dr] & & E^3_{p,0} \ar[ur] \ar[dr] & & & E^3_{p-2,1} \ar[ur] \ar[dr] & & \\
0 \ar[ur] & & 0 \ 0 \ar[ur] & & 0 & 0\ar[ur] & & 0 & 
}
$$
Here we denote by $I$ the composition $E^2_{p-1,1}\to E^3_{p-1,1}\to H_p$, and by $P$ the composition $H_p\to E^3_{p,0}\to E^2_{p,0}$. Taking into account that $E^2_{p,0}=H_p(A)$ and $E^2_{p,1}=H_p(A')$, the top line translates into the long exact sequence
\begin{equation} \label{eq:alg-Gysin}
\dots H_{p-1}(A')\stackrel{I}\to H_p(C^{tot}(f))\stackrel{P}\to H_p(A)\stackrel{\bar d^2}\to  H_{p-2}(A')\stackrel{I}\to H_{p-1}(C^{tot}(f))\dots
\end{equation}
We call~\eqref{eq:alg-Gysin} the {\bf Gysin exact sequence} of the complex $C_{*,*}(f)$.

\begin{lemma} \label{lem:Gysin}
The homological long exact sequence of the cone $\cC(f)$ and the Gysin exact sequence of the complex $C_{*,*}(f)$ coincide. 
\end{lemma}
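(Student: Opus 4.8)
The plan is to show that the total complex $C^{tot}(f)$ and the cone $\cC(f)$ are literally the same complex (up to reordering the two summands in each degree), that the column filtration on $C^{tot}(f)$ becomes the subcomplex filtration of the short exact sequence~\eqref{eq:short}, and then to match the two long exact sequences map by map via a short diagram chase on the spectral sequence pages.

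First I would note that, because $C_{*,*}(f)$ is supported on the lines $q=0,1$, in each total degree $n$ we have $C^{tot}(f)_n=C_{n,0}\oplus C_{n-1,1}=A_n\oplus A'_{n-1}$, with total differential $d^1+d^2$ acting by $(a,a')\mapsto(\p_A a,\,f(a)-\p_{A'}a')$. On the other hand $\cC(f)_n=A'[-1]_n\oplus A_n=A'_{n-1}\oplus A_n$ with cone differential $(a',a)\mapsto(-\p_{A'}a'+f(a),\,\p_A a)$. Hence the transposition $(a,a')\leftrightarrow(a',a)$ is an isomorphism of chain complexes $\psi\colon C^{tot}(f)\xrightarrow{\sim}\cC(f)$; in particular $H_*(C^{tot}(f))\cong H_*(\cC(f))$.

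Next I would identify the filtration. Since $C_{\ell,n-\ell}\neq 0$ only for $\ell\in\{n-1,n\}$, the filtration $F_pC^{tot}_*$ has in degree $n$ exactly the two nontrivial steps $F_{n-1}C^{tot}_n=C_{n-1,1}=A'_{n-1}$ and $F_nC^{tot}_n=C^{tot}_n$; under $\psi$ this is precisely the filtration of $\cC(f)_n$ by the subcomplex $i(A'[-1]_*)$ of~\eqref{eq:short}, with quotient $A_*$ via $p$ (note that the induced differential on $F_{n-1}$ is $-\p_{A'}$, matching $A'[-1]$). Now I would run the standard computation: $E^1_{n,0}=A_n$ and $E^1_{n-1,1}=A'_{n-1}$, with $d^1$ equal to $\p_A$, resp.\ $-\p_{A'}$, so $E^2_{n,0}=H_n(A)$ and $E^2_{n-1,1}=H_{n-1}(A')$, and $E^3=E^\infty$ for the usual two-line reason. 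The map $P$ of~\eqref{eq:alg-Gysin} takes a homology class to the class of its top-filtration part, which under $\psi$ is exactly $p_*$; the map $I$ takes $[a']\in H_{n-1}(A')$ to the class represented by $i(a')$, i.e.\ $\psi^{-1}\!\circ i_*$ (where $i_*$ uses $H_n(A'[-1])=H_{n-1}(A')$); and for $\bar d^2$ I would lift $[a]\in H_n(A)=E^2_{n,0}$ to $a\in C_{n,0}$ and apply $d=d^1+d^2$, obtaining $da=f(a)\in C_{n-2,1}$, so $\bar d^2[a]=[f(a)]$. This coincides with the connecting homomorphism $f_*$ of~\eqref{eq:short}, since lifting $[a]$ to $(0,a)\in\cC(f)_n$ and applying the cone differential yields $(f(a),0)\in i(A'[-1]_n)$. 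Hence~\eqref{eq:alg-cone} and~\eqref{eq:alg-Gysin} coincide.

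The main thing to be careful about is purely bookkeeping: keeping the shift conventions $A[k]_*=(A_{*+k},(-1)^k\p_A)$ and the associated signs consistent throughout, and checking that the identification $E^3_{*,*}=E^\infty_{*,*}=\mathrm{gr}\,H_*(C^{tot}(f))$ matches, with the correct signs, the sub/quotient description of $H_*(\cC(f))$ coming from~\eqref{eq:short}. Once $\psi$ and the two-step filtration are in hand, the rest is routine.
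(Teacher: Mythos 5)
Your proof is correct, and its overall skeleton is the same as the paper's: identify the total complex $C^{tot}(f)$ with the cone $\cC(f)$, match the column filtration with the subcomplex $A'[-1]_*\subset\cC(f)_*$ from~\eqref{eq:short}, observe that $\bar d^2$ is induced by $d^2=f$ (your lift-and-apply-$d$ computation is the same point, phrased on representatives), and then identify $P$ with $p_*$ and $I$ with $i_*$. Where you genuinely diverge is in this last step: you argue by the direct chase on representatives, i.e.\ by computing the two-step induced filtration on $H_*(C^{tot}(f))$ and the explicit form of the edge maps on the $E^3=E^\infty$ page. The paper instead proves $P=p_*$ and $I=i_*$ by a functoriality argument: it introduces two auxiliary filtrations on $C^{tot}$ (the shifted filtration $F'$ and the tautological, or b\^ete, filtration $F''$) and factors $i$ and $p$ through the filtration-preserving maps $A'[-1]\to {}'C^{tot}\to {}''C^{tot}\to C^{tot}\to A$, reading off $I$ and $P$ as the maps induced on the third pages of the associated spectral sequences. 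Your route is precisely the alternative that the paper itself records in the remark following its proof (``the identities $P=p_*$ and $I=i_*$ can also be checked directly from the definition of the spectral sequence''); it is more elementary and self-contained, at the cost of invoking the standard explicit description of edge homomorphisms for a filtered complex (harmless here, since the filtration has only two nontrivial steps per degree), whereas the paper's version avoids all computation with representatives by appealing to functoriality of spectral sequences under filtered chain maps.
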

  
\begin{proof}
It follows from the definitions that the total complex $C^{tot}:=C^{tot}(f)$ coincides with the cone $\cC(f)$. We need to show $I=i_*$, $P=p_*$, $\bar d^2=f_*$.

That $\bar d^2=f_*$ is a consequence of the fact that the differential $d$ has no component $d^0:C_{p,1}\to C_{p,0}$. Then $d^2$ is a chain map on $E^1_{*,*}=C_{*,*}$ and $\bar d^2$ is induced by $d^2=f$. 

In order to prove the identities $P=p_*$ and $I=i_*$ we consider the following two additional filtrations on $C^{tot}$: 
$$
F'_pC^{tot}_*:=F_{p-1}C^{tot}_*,\qquad \qquad F''_pC^{tot}_*:=\oplus_{\ell\le p} C^{tot}_\ell.
$$
[The second filtration is also known as the \emph{filtration b\^ete}, or \emph{tautological filtration}.] We denote $'C^{tot}_*$ and $''C^{tot}_*$ the complex $C^{tot}_*$ endowed with the filtrations $F'$, respectively $F''$. Also, we think of $C^{tot}_*$ as carrying the filtration $F$. Then the identity maps $\mathrm{Id}:{'C^{tot}}\to {''C^{tot}}$ and $\mathrm{Id}:{''C^{tot}}\to C^{tot}$ are chain maps that respect the filtrations. Similarly, we endow $A'[-1]_*$ and $A_*$ with the tautological filtrations, with respect to which the inclusion $A'[-1]\to {'C^{tot}}$ and the projection $C^{tot}\to A$ are chain maps that respect the filtrations. To summarize, we have the following sequence of filtered complexes and chain maps 
\begin{equation}
\xymatrix{
A'[-1] \ar[r]^i & {'C^{tot}} \ar[r]^{\mathrm{Id}} & {''C^{tot}} \ar[r]^{\mathrm{Id}} & C^{tot} \ar[r]^p & A
}
\end{equation}
In particular, these maps induce morphisms between the corresponding spectral sequences. The two main observations are now the following: 
\begin{itemize}
\item the composition $\mathrm{Id}\circ i:A'[-1]\to {''C^{tot}}$ induces on the $3^{\mbox{\tiny rd}}$ page the map $I$. 
\item the composition $p\circ\mathrm{Id}:{''C^{tot}}\to A$ induces on the $3^{\mbox{\tiny rd}}$ page the map $P$.
\end{itemize}
Indeed, the spectral sequences for the tautological filtrations are supported on the single line $q=0$, whereas the spectral sequence for $'C^{tot}$ is supported on the lines $q=-1$ and $q=0$, with $'E^r_{p,0}=E^r_{p-1,1}$ and $'E^r_{p,-1}=E^r_{p-1,0}$. Thus the composition $\mathrm{Id}\circ i$ is equal on the third page to $H_{p-1}(A')\to E^3_{p-1,1}\to H_p$, which is $I$. Also, the composition $p\circ\mathrm{Id}$ is equal on the third page to $H_p\to E^3_{p,0}\to H_p(A)$, which is $P$.

On the other hand, the map induced on the third page by $\mathrm{Id}\circ i=i$ is $i_*$ since both the source and the target carry the tautological filtrations. Similarly, the map induced on the third page by $p\circ\mathrm{Id}=p$ is $p_*$. This completes the proof. 
\end{proof}

\begin{remark}
In the previous proof, the identities $P=p_*$ and $I=i_*$ can also be checked directly from the definition of the spectral sequence, by explicitly computing the filtration $F_pH_*$ induced on $H_*$ by the filtration $F_pC^{tot}_*$. 
\end{remark}

\begin{proof}[Proof of Theorem~\ref{thm:grid}]  
Given $H\in\cH^{\prime\, S^1}_{N,\reg}$, $(J,g)\in 
\cJ^{S^1}_{N,\reg}(H)$, and a generic collection of perfect Morse 
functions $f_p:S_p\to \R$, $p\in\cP^0(H)$, we denote 
$C_*:=BC^N_*(H,\{f_p\},J,g)$ (we recall that we work in the 
trivial free homotopy class). Filtering by the action as in the 
definition of $SC^{\pm,S^1,N}_*(H,J,g)$ in 
Section~\ref{sec:S1equivsymplhom}, we obtain filtered complexes  
$C^\pm_*:=BC^{\pm,N}_*(H,\{f_p\},J,g)$. We denote by 
$(E^{\pm,N;r}_{*,*},\bar d^r)$ and $(E^{N;r}_{*,*},\bar d^r)$ the corresponding spectral sequences, 
which degenerate at $r=3$ for dimensional reasons.  
 
Since $d^0=0$ we have $E^{N;1}_{*,*}=E^{N;0}_{*,*}$, the differential $\bar d^1$ is canonically identified with $d^1$, and the differential $\bar d^2$ on $E^{N;2}_{*,*}$ is induced by $d^2$ viewed as a chain map on $(E^{N;1}_{*,*}, d^1)$. The same holds for $E^{\pm,N;*}_{*,*}$. We thus have
a short exact sequence 
$0\to (E^{-,N;1}_{*,*},d) \to 
(E^{N;1}_{*,*},d) \to (E^{+,N;1}_{*,*},d) \to 0$ with $d= 
d^1+d^2$. This can be rewritten as a morphism of short exact 
sequences of chain complexes 
\begin{equation} \label{eq:bard2}  
\xymatrix{ 
0 \ar[r] & (E^{-,N;1}_{*,0},d^1) \ar[r] \ar[d]^{d^2} & 
(E^{N;1}_{*,0},d^1) 
\ar[r] \ar[d]^{d^2} & (E^{+,N;1}_{*,0},d^1) \ar[r] \ar[d]^{d^2} & 0 
\\ 
0 \ar[r] & (E^{-,N;1}_{*-2,1},-d^1) \ar[r] & (E^{N;1}_{*-2,1},-d^1) 
\ar[r] & (E^{+,N;1}_{*-2,1},-d^1) \ar[r] & 0 
} 
\end{equation}   
 
We claim that the commutative diagram~\eqref{eq:grid} in the statement 
is obtained by applying Lemma~\ref{lem:grid} to~\eqref{eq:bard2}.  
This follows from the following three observations. Firstly, the cone 
$\mathcal{C}(d^2)$ is canonically identified with 
$(C,d)$, respectively 
$(C^\pm,d)$, so that its homology is 
$H_*(BC^N_*(H),d)$, resp. $H_*(BC^{\pm,N}_*(H),d)$. Secondly, 
the homology of $(E^{N;1}_{*,i},d^1)$, $i=0,1$ is isomorphic to 
$SH^{S^1,N}_*(H)$, and the homology of $(E^{\pm, N;1}_{*,i},d^1)$, $i=0,1$ is  
isomorphic to $SH^{\pm,S^1,N}_*(H)$. Thirdly, 
Lemma~\ref{lem:Gysin} 
shows that, via the above identifications with the cone 
$\mathcal{C}(d^2)$, the Gysin exact sequences obtained from the  
spectral sequences $E^{N;r}_{*,*}$ and $E^{\pm,N;r}_{*,*}$ coincide 
with the homological long exact sequences of the corresponding cone 
constructions.  
 
Passing to the direct limit on $H\in\cH^{\prime\, S^1}_{N,\reg}$ and 
$N\to\infty$ we obtain the commutative diagram~\eqref{eq:grid}.  
\end{proof}  
 
\begin{remark} \label{rmk:Delta} Denoting the maps in the Gysin exact sequence by 
$$
\xymatrix
@C=20pt
{
\dots \ar[r] & SH_*(W) \ar[r]^E & SH_*^{S^1}(W) \ar[r]^D &
SH_{*-2}^{S^1}(W) \ar[r]^M & SH_{*-1}(W) \ar[r] & \dots
}
$$
we defined in the Introduction the Batalin-Vilkovisky (BV) operator 
$$
\Delta:=M\circ E:SH_*(W)\to SH_{*+1}(W).
$$
The 
interpretation given by Lemma~\ref{lem:Gysin}
of the Gysin exact sequence as the long exact sequence of the cone $C_*:=\cC(d^2)$ allows us to give the following description of $\Delta$ at chain level. We identify $C_*=BC_*^N(H,\{f_p\},J,g)$ with $SC_{*-1}^{S^1}\oplus SC_*^{S^1}:=SC_{*-1}^{S^1,N}(H,J,g)\oplus SC_*^{S^1,N}(H,J,g)$ via
$$
m_p\longmapsto (S_p,0), \qquad M_p\longmapsto (0,S_p).
$$
Via this identification, the map $\Delta$ is induced by the chain map $\bar\Delta :C_*\to C_{*+1}$ given by 
$$
\bar \Delta : SC_{*-1}^{S^1}\oplus SC_*^{S^1}\longrightarrow SC_*^{S^1}\oplus SC_{*+1}^{S^1},
$$
$$
(S_p,S_q) \longmapsto (S_q,0).
$$
Indeed, the short exact sequence of the cone $\cC(d^2)=SC_{*-1}^{S^1}\oplus SC_*^{S^1}$ writes
$$
0\to SC_{*-1}^{S^1}\stackrel i \to SC_{*-1}^{S^1}\oplus SC_*^{S^1}\stackrel p \to SC_*^{S^1}\to 0.
$$
The maps $i$ and $p$ are the canonical inclusion and projection. The connecting homomorphism in the homological long exact sequence is the map $D$, so that the maps 
$i$ and $p$ induce $M$ and $E$ respectively. Hence the composition $\bar\Delta=i\circ p$ induces $\Delta=M\circ E$. We refer to~\cite{BO4} for a description of the BV-operator from a different perspective. 

\end{remark}

\subsection{Filtered continuation maps for parametrized symplectic homology} \label{sec:continuation} 
 
Let $H_s$, $s\in\R$  be a smooth increasing homotopy from $H_-\in\cH^{S^1}_{N,\mathrm{reg}}$ to $H_+\in\cH^{S^1}_{N,\mathrm{reg}}$, such that $H_s\equiv H_-$ for $s<<0$ and $H_s\equiv H_+$ for $s>>0$. Let $(J_\pm,g_\pm)\in\cJ^{S^1}_{N,\reg}(H_\pm)$ and $(J_s,g_s)$, $s\in\R$ a 
 regular smooth homotopy in $\cJ^{S^1}_N$ from $(J_-,g_-)$ to 
 $(J_+,g_+)$, which is constant near $\pm\infty$. Given
$\op\in\cP(H_-)$ and $\up\in\cP(H_+)$, we define the {\bf moduli space
of \boldmath$s$-dependent \boldmath$S^1$-equivariant Floer
trajectories} $\cM(S_\op,S_\up;H_s,J_s,g_s)$ to consist of pairs
$(u,\lambda)$ with    
 $$ 
 u:\R\times S^1\to \widehat W, \qquad \lambda:\R\to S^{2N+1} 
 $$ 
satisfying  
\begin{equation} \label{eq:Floer-u-s} 
\p_s u + J^\theta_{s,\lambda(s)}\p_\theta u - J^\theta_{s,\lambda(s)}X^\theta_{H_{s,\lambda(s)}}(u)=0, 
\end{equation} 
\begin{equation} \label{eq:Floer-lambda-s} 
\dot \lambda(s) -\int_{S^1} \vec\nabla_\lambda H_s(\theta,u(s,\theta),\lambda(s))d\theta=0, 
\end{equation} 
and  
\begin{equation} \label{eq:Floer-asy-s} 
\lim_{s\to-\infty} (u(s,\cdot),\lambda(s))\in S_{\op}, \qquad \lim_{s\to+\infty} (u(s,\cdot),\lambda(s))\in S_{\up}. 
\end{equation} 
Due to the $s$-dependence, the additive group $\R$ does not act on the moduli space $\cM(S_\op,S_\up;H_s,J_s,g_s)$.  
Recall that, for each $p=(\gamma,\lambda)\in\cP(H_\pm)$, we have chosen a 
cylinder $\sigma_p:[0,1]\times S^1\to\widehat W$ such that 
$\sigma_p(0,\cdot)=l_{[\gamma]}$ and $\sigma_p(1,\cdot)=\gamma$. We 
define $\overline \sigma_p(s,\theta):=\sigma_p(1-s,\theta)$. We define  
$$ 
\cM^A(S_\op,S_\up;H_s,J_s,g_s)\subset \cM(S_\op,S_\up;H_s,J_s,g_s) 
$$ 
to consist of trajectories $(u,\lambda)$ such that 
$[\sigma_{\op}\#u\#\overline \sigma_{\up}]=A\in H_2(\widehat 
W;\Z)$. It follows from Proposition~\ref{prop:indexMB} that  
\begin{equation}  \label{eq:dim-MB-s} 
\dim\, \cM^A(S_\op,S_\up;H_s,J_s,g_s) = -\mu(\op) +\mu(\up) +2\langle 
c_1(T\widehat W),A\rangle+1. 
\end{equation} 
 
For each $S^1$-orbit of critical 
points $S_p\subset \cP(H_\pm)$ we choose a perfect Morse function 
$f^\pm_p:S_p\to\R$. We denote by $m_p$, $M_p$ the minimum, respectively 
the maximum of $f^\pm_p$. Given $\op\in\cP(H_-)$, $\up\in\cP(H_+)$, 
$Q_\op\in\mathrm{Crit}(f^-_\op)$, $Q_\up\in\mathrm{Crit}(f^+_\up)$, $A\in H_2(W;\Z)$, and 
$m_\pm\ge 0$, we denote by   
$$ 
\cM^A_{m_-,m_+}(Q_\op,Q_\up;H_s,\{f^\pm_p\},J_s,g_s) 
$$ 
the union for $p^-_1,\dots,p^-_{m_-}\in\cP(H_-)$, $p^+_1,\dots,p^+_{m_+}\in\cP(H_+)$, and $A^-_1+\dots+A^-_{m_-}+A^0+A^+_1+\dots+A^+_{m_+}=A$ of 
the fibered products  
\begin{eqnarray*} 
&& 
W^u(Q_\op)  
\times_{\oev} 
(\cM^{A^-_1}(S_{\op}\,,S_{p^-_1};H_-,J_-,g_-)\!\times\!\R^+)\\ 
&& {_{\varphi_{f^-_{p^-_1}}\!\circ\uev}}\!\times   
_{\oev}\dots {_{\varphi_{f^-_{p^-_{m_- -1}}}\!\circ\uev}}\!\times   
_{\oev} (\cM^{A^-_{m_-}}(S_{p^-_{m_- -1}},S_{p^-_{m_-}};H_-,J_-,g_-)\!\times\!\R^+) \\ 
&& {_{\varphi_{f^-_{p^-_{m_-}}}\!\circ\uev}\times_{\oev}}  
(\cM^{A^0}(S_{p^-_{m_-}},S_{p^+_1};H_s,J_s,g_s)\!\times\!\R^+)\\ 
&& {_{\varphi_{f^+_{p^+_1}}\!\circ\uev}}\!\times   
_{\oev} (\cM^{A^+_1}(S_{p^+_1},S_{p^+_2};H_+,J_+,g_+)\!\times\!\R^+) \\ 
&&{_{\varphi_{f^+_{p^+_2}}\!\circ\uev}}\!\times   
_{\oev}\dots  
{_{\varphi_{f^+_{p^+_{m_+}}}\!\!\circ\uev}}\!\!\times 
_{\oev}   
\cM^{A^+_{m_+}}(S_{p^+_{m_+}},\!S_{\up})  
{_{\uev}\times} W^s(Q_\up). 
\end{eqnarray*} 
It follows from~\cite[Lemma~3.6]{BOauto} that, for a generic choice of 
the collection of Morse functions $\{f^\pm_p\}$, the previous fibered 
product is a smooth manifold of dimension  
\begin{eqnarray*}  
\lefteqn{\dim \, \cM^A_{m_-,m_+}(Q_\op,Q_\up;H_s,\{f^\pm_p\},J_s,g_s)} \\ 
& = & 
-\mu(\op)+\ind_{f^-_\op}(Q_\op) + \mu(\up) - \ind_{f^+_\up}(Q_\up) + 
2\langle c_1(T\widehat W),A\rangle. 
\end{eqnarray*}  
We denote  
$$ 
\cM^A(Q_\op,Q_\up;H_s,\{f^\pm_p\},J_s,g_s):=\bigcup_{m_\pm\ge 0}  \cM^A_{m_-,m_+}(Q_\op,Q_\up;H_s,\{f^\pm_p\},J_s,g_s). 
$$ 
Whenever $\dim\, \cM^A(Q_\op,Q_\up;H_s,\{f^\pm_p\},J_s,g_s)=0$, we can associate a sign $\eps(\u)$ to each of its elements via the choice of coherent orientations and the fibered sum rule~\cite[Section~4.4]{BOauto}. We define the {\bf continuation morphism}  
$$ 
\sigma_{H_+,H_-}:BC_*^{a,N}(H_-)\to BC_*^{a,N}(H_+) 
$$ 
by  
$$ 
\sigma_{H_+,H_-}(Q_\op):=\sum_{\substack{ 
  \up\in \cP^a(H_+), Q_\up\in \mathrm{Crit}(f^+_{\up}) \\ 
|Q_\op| - |Q_\up\, e^A|=0}} 
\ \sum_{\scriptstyle \u\in \cM^A(Q_\op,Q_\up;H_s,\{f^\pm_\gamma\},J_s,g_s)} 
\epsilon(\u)Q_\up\, e^A, 
$$ 
for all $\op\in\cP(H_-)$ and $Q_\op \in {\rm Crit}(f^-_\op)$. In order to emphasize the homotopy used to  
define $\sigma_{H_+,H_-}$, we shall sometimes write $\sigma_{H_+,H_-}^{H_s}$. 
 
\begin{proof}[Proof of Lemma~\ref{lem:continuation}] 
That the map $\sigma_{H_+,H_-}$ is a chain morphism satisfying $\sigma_{H_+,H_-}\circ d=d\circ \sigma_{H_+,H_-}$ follows from a straightforward generalization of the Correspondence Theorem~3.7 in~\cite{BOauto}.  Via the identification of the parametrized Morse-Bott complexes with the Floer complexes of suitable perturbations of the Hamiltonians $H_\pm$, the morphism $\sigma_{H_+,H_-}$ corresponds to the continuation morphism induced by an increasing homotopy of Hamiltonians.  
 
That $\sigma_{H_+,H_-}$ preserves the filtration follows from the fact that each of the moduli spaces  
$\cM^{A^0}(S_{p^-_{m_-}},S_{p^+_1};H_s,J_s,g_s)$, $\cM^{A^-_j}(S_{p^-_{i-1}},S_{p^-_i};H_-,J_-,g_-)$, $1\le i\le m_-$ and  
$\cM^{A^+_i}(S_{p^+_i},S_{p^+_{i+1}};H_+,J_+,g_+)$, $1\le i \le m_+$ carries a free $S^1$-action (we denote $p^-_0=\op$, $p^+_{m_+ +1}=\up$). In case they are nonempty, their dimension is therefore at least $1$. It then follows from the dimension formulas~\eqref{eq:dim-MB-s}  
and~\eqref{eq:dimMS} that $|\op|-|\up e^A|=-\mu(\op)+\mu(\up)+2\langle c_1(T\widehat W),A\rangle\ge 0$.  
\end{proof}  
 
For the next statement it is useful to introduce the following
algebraic concept. Let $(C_*,d_C)$ and $(D_*,d_D)$ be differential
complexes endowed with increasing filtrations $F_\ell C_*$, $F_\ell
D_*$, $\ell\in \Z$. A map   
$K:C_*\to D_*$ is said to be {\bf of order \boldmath$k\ge 0$} if
$K(F_\ell C_*)\subset F_{\ell+k}D_*$ (we allow $K$ to shift the
grading). This definition is relevant in the following context. Assume
$f,g:C_*\to D_*$ are filtration preserving chain maps such that
$f-g=d_D \circ K + K\circ d_C$ for a chain homotopy $K:C_*\to D_{*+1}$
of order $k\ge 0$. Then the maps $f_r, g_r$, $r\ge 0$ induced on the
associated spectral sequences are homotopic for $r=k$, and coincide
for $r>k$~\cite[Exercise~3.8, p.87]{McC}.   
 
\begin{proposition} \label{prop:chain-homotopy} 
Let $H_-\le H_+$ be Hamiltonians in $\cH^{S^1}_{N,\mathrm{reg}}$ and $H_s^0, H_s^1\in \cH^{S^1}_N$, $s\in\R$ be generic smooth increasing homotopies from $H_-$ to $H_+$, which are constant near $\pm\infty$. Let $(J_\pm,g_\pm)\in\cJ^{S^1}_{N,\reg}(H_\pm)$ and $(J^0_s,g^0_s)$, $(J^1_s,g^1_s)$ be two generic  smooth homotopies in $\cJ^{S^1}_N$ from $(J_-,g_-)$ to 
 $(J_+,g_+)$, which are constant near $\pm\infty$. A generic homotopy of homotopies $(H_s^\rho,J^\rho_s,g^\rho_s)$, $\rho\in [0,1]$ induces a map $K:BC_*^{a,N}(H_-)\to BC_{*+1}^{a,N}(H_+)$ of order $1$ such that  
$$ 
\sigma_{H_+,H_-}^{H_s^1} - \sigma_{H_+,H_-}^{H_s^0} = d\circ K+K\circ d. 
$$  
\end{proposition}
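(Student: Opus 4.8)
The plan is to mimic the construction of the continuation morphism $\sigma_{H_+,H_-}$ from a homotopy, but one level up: instead of counting rigid $s$-dependent Morse-Bott trajectories, I would count rigid elements in moduli spaces governed by the homotopy of homotopies $(H_s^\rho,J_s^\rho,g_s^\rho)$, $\rho\in[0,1]$. Concretely, for $\op\in\cP(H_-)$, $\up\in\cP(H_+)$, $Q_\op\in\mathrm{Crit}(f_\op^-)$, $Q_\up\in\mathrm{Crit}(f_\up^+)$, and $A\in H_2(W;\Z)$, I would introduce the parametrized moduli space
$$
\cM^A(Q_\op,Q_\up;\{H_s^\rho\},\{f_p^\pm\},\{J_s^\rho\},\{g_s^\rho\})
$$
consisting of pairs $(\rho,\u)$ with $\rho\in[0,1]$ and $\u$ a broken Morse-Bott configuration for the $s$-dependent data $(H_s^\rho,J_s^\rho,g_s^\rho)$, built out of the same kind of iterated fibered products over $\R^+$-gluing parameters as in the definition of $\sigma_{H_+,H_-}$. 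For a generic homotopy of homotopies this is a smooth manifold of dimension
$$
-\mu(\op)+\ind_{f_\op^-}(Q_\op)+\mu(\up)-\ind_{f_\up^+}(Q_\up)+2\langle c_1(T\widehat W),A\rangle+1,
$$
one more than the corresponding moduli space for a single homotopy, since the parameter $\rho$ adds a dimension. Counting, with signs coming from coherent orientations via the fibered sum rule of~\cite[Section~4.4]{BOauto}, the rigid (zero-dimensional) elements with $|Q_\op|-|Q_\up\,e^A|=-1$ defines
$$
K(Q_\op):=\sum_{\substack{\up,Q_\up\\ |Q_\op|-|Q_\up\,e^A|=-1}}\ \sum_{(\rho,\u)}\eps(\u)\,Q_\up\,e^A,
$$
a degree $+1$ map $K:BC_*^{a,N}(H_-)\to BC_{*+1}^{a,N}(H_+)$.

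The chain homotopy identity $\sigma_{H_+,H_-}^{H_s^1}-\sigma_{H_+,H_-}^{H_s^0}=d\circ K+K\circ d$ then comes from analyzing the boundary of the one-dimensional components of this moduli space. The codimension-one boundary has three types of strata: the two endpoints $\rho=0$ and $\rho=1$, which contribute $\sigma_{H_+,H_-}^{H_s^0}$ and $\sigma_{H_+,H_-}^{H_s^1}$; breaking of a Morse-Bott trajectory for the $H_-$-data (or a Morse flow line of some $f_p^-$) at the incoming end, contributing $K\circ d$; and breaking at the outgoing end involving the $H_+$-data, contributing $d\circ K$. The Morse-Bott compactness and gluing analysis needed here is exactly the one developed in~\cite{BOauto}, applied invariantly with respect to the $S^1$-action, together with the by-now routine Floer-Gromov compactness for $s$-dependent equations established using the maximum principle~\cite[Lemma~1.5]{O} and the fact that all Hamiltonians involved are split and $\theta$-independent at infinity; the coherent orientation conventions guarantee that the signs combine correctly, so that the algebraic identity holds.

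It remains to check that $K$ is of order $1$, i.e. $K(F_\ell BC_*^{a,N}(H_-))\subset F_{\ell+1}BC_*^{a,N}(H_+)$. This is the analogue of the filtration-preservation argument in the proof of Lemma~\ref{lem:continuation}: each Floer-type piece of a configuration contributing to $K$ (the $s$-dependent piece and each of the $H_\pm$-pieces) carries a free $S^1$-action, hence has dimension at least $1$ when nonempty, and the $\rho$-parameter accounts for exactly one further dimension. Tracking these constraints through the dimension formulas~\eqref{eq:dim-MB-s} and~\eqref{eq:dimMS}, a rigid configuration with $|Q_\op|-|Q_\up\,e^A|=-1$ forces $-\mu(\op)+\mu(\up)+2\langle c_1(T\widehat W),A\rangle\ge -1$, which is precisely the statement that $K$ shifts the filtration degree $k=-\mu(p)-2\langle c_1(T\widehat W),A\rangle$ by at most $1$. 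I expect the main obstacle to be the bookkeeping in this last step, since the extra $\rho$-dimension is delicate: one has to be sure that the free $S^1$-actions on \emph{all} the constituent moduli spaces (including, now, the possibility $m_-=m_+=0$ where only the $s$-dependent piece appears) are used correctly, so that the off-by-one count in the filtration shift is sharp and not merely $\le 2$. Once this is in place, the conclusion follows, and combined with the algebraic statement cited from~\cite[Exercise~3.8, p.87]{McC} it yields that the maps induced on the associated spectral sequences by the two continuation morphisms are homotopic on the $r=1$ page and coincide for $r\ge 2$ — which is what is needed to make the constructions in \S\ref{sec:MBparam} independent of the chosen homotopies.
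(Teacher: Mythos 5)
Your proposal follows essentially the same route as the paper: one considers the $1$-parameter family of Morse--Bott moduli spaces over $\rho\in[0,1]$, defines $K$ by counting its rigid elements (those with $|Q_\op|-|Q_\up\,e^A|=-1$, occurring at finitely many interior values of $\rho$), obtains the chain homotopy identity from the boundary of the $1$-dimensional components (endpoints $\rho=0,1$ versus breaking at the $H_-$ and $H_+$ ends), and proves the order-$1$ property exactly as you indicate, using the free $S^1$-action on the $\rho$-family of $s$-dependent pieces together with the dimension formula~\eqref{eq:dim-MB-s} and the order-$0$ estimate for the $H_\pm$-pieces from Lemma~\ref{lem:continuation}. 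The only point you leave implicit --- the mutually compensating Morse--Bott degenerations (a gradient segment shrinking to zero length versus breaking at a point of $S_p$ away from $\mathrm{Crit}(f_p^\pm)$) --- is handled in the paper by the same appeal to the analysis of~\cite{BOauto} that you cite.
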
 
 
\begin{proof} Given $\op\in\cP(H_-)$, $\up\in \cP(H_+)$, $Q_\op\in\mathrm{Crit}(f^-_\op)$, $Q_\up\in\mathrm{Crit}(f^+_\up)$, and $A\in H_2(W;\Z)$ such that  
$$ 
|Q_\op|-|Q_\up e^A|=0,  
$$ 
we define  
$$ 
\cM^A:=\bigcup_{\rho\in [0,1]} \cM^A(Q_\op,Q_\up;H^\rho_s, \{f^\pm_p\},J^\rho_s, g^\rho_s). 
$$ 
For a generic choice of the triple $(H_s^\rho,J^\rho_s,g^\rho_s)$, $\rho\in [0,1]$, the space $\cM^A$ is a smooth $1$-dimensional manifold. Its boundary splits as  
$$ 
\p \cM^A= \p^0\cM^A \cup \p^1\cM^A \cup \p^{int}\cM^A. 
$$ 
Here $\p^i\cM^A=\cM^A(Q_\op,Q_\up;H^i_s, \{f^\pm_p\},J^i_s, g^i_s)$, $i=0,1$ and $\p^{int}\cM^A$ corresponds to degeneracies at some point $\rho\in]0,1[$, namely  
\begin{eqnarray*} 
\lefteqn{\p^{int}\cM^A}\\ 
 &\hspace{-.3cm} = & \!\!\bigcup_{\rho\in ]0,1[} \!\!\cM^B(Q_\op,Q_{p_-};H_-,\{f^-_p\},J_-,g_-) \times \cM^{A-B}(Q_{p_-},Q_\up;H^\rho_s, \{f^\pm_p\},J^\rho_s, g^\rho_s) \\ 
& \hspace{-.3cm}& \hspace{-.5cm}\cup \bigcup_{\rho\in ]0,1[}\!\!\cM^{A-B}(Q_\op,Q_{p_+};H^\rho_s, \{f^\pm_p\},J^\rho_s, g^\rho_s) \times \cM^B(Q_{p_+},Q_\up;H_+,\{f^+_p\},J_+,g_+). 
\end{eqnarray*} 
Here the union is taken over $B\in H_2(W;\Z)$, $p_\pm\in\cP(H_\pm)$, $Q_{p_\pm}\in\mathrm{Crit}(f^\pm_{p_\pm})$ such that $|Q_{p_-}|-|Q_\up e^{A-B}|=-1$ and $|Q_\op|-|Q_{p_+} e^{A-B}|=-1$. For a generic choice of the triple $(H_s^\rho,J^\rho_s,g^\rho_s)$, $\rho\in [0,1]$, there are only a finite number of values of $\rho$ involved in the above union. The elements of $\p^{int}\cM^A$ correspond to the breaking of a gradient trajectory involved in one of the fiber products defining $\cM^A(Q_\op,Q_\up;H^\rho_s, \{f^\pm_p\},J^\rho_s, g^\rho_s)$, as $\rho$ converges to some $\rho_0\in]0,1[$. There are yet two other types of degeneracy in $\cM^A$, which compensate each other: the length of a gradient trajectory in a fibered product as above can shrink to zero, or a Floer trajectory can break at a point $Q\in S_p\setminus \mathrm{Crit}(f^\pm_p)$, for some $p\in\cP(H_\pm)$.  
 
We define $K:BC_*^{a,N}(H_-)\to BC_{*+1}^{a,N}(H_+)$ by 
$$ 
K(Q_\op)=\sum_{\rho\in]0,1[} \ \sum_{|Q_\op|-|Q_\up e^A|=-1} \ \sum_{\u\in\cM^A(Q_\op,Q_\up;H^\rho_s, \{f^\pm_p\},J^\rho_s, g^\rho_s)} \eps(\u)Q_\up e^A. 
$$ 
The above description of $\p\cM^A$ shows that we have indeed $\sigma_{H_+,H_-}^{H_s^1} - \sigma_{H_+,H_-}^{H_s^0} = d\circ K+K\circ d$. That the chain homotopy $K$ is of order $1$ means that it satisfies $K(F_\ell B_*^{a,N}(H_-))\subset F_{\ell+1} B_{*+1}^{a,N}(H_+)$. This follows from the fact that each family of moduli spaces $\bigcup_{\rho\in]0,1[} \cM^{A^0}(Q_{p_-},Q_{p_+};H^\rho_s, \{f^\pm_p\},J^\rho_s, g^\rho_s)$  
carries a free $S^1$-action. In case it is nonempty, its dimension must therefore be at least $1$. On the other hand, it follows from~\eqref{eq:dim-MB-s} that this dimension is equal to $|p_-|-|p_+ e^{A^0}|+2$, so that $|p_-|-|p_+ e^{A^0}|\ge -1$. A similar argument shows that, for the moduli spaces $\cM^{A^\pm}(Q_{p^\pm_0},Q_{p^\pm_1};H_\pm,\{f^\pm_p\},J_\pm,g_\pm)$ appearing in the definition of $K$, we must have $|p^\pm_0|-|p^\pm_1 e^{A^\pm}|\ge 0$. Thus, for the fibered products appearing in the definition of $K$ we have $|\op|-|\up e^A|\ge -1$.  
\end{proof} 
 
\begin{proposition} \label{prop:composition} 
Let $H_0\le H_1\le H_2$ be three Hamiltonians in $\cH^{S^1}_{N,\mathrm{reg}}$, and let $H_s^{01}, H_s^{12}\in  \cH^{S^1}_N$, $s\in\R$ be two generic smooth increasing homotopies from $H_0$ to $H_1$, respectively from $H_1$ to $H_2$, which are constant near $\pm\infty$. Let $(J_i,g_i)\in\cJ^{S^1}_{N,\reg}(H_i)$, $i=0,1,2$ and $(J^{01}_s,g^{01}_s)$, $(J^{12}_s,g^{12}_s)$ be two generic  smooth homotopies in $\cJ^{S^1}_N$ from $(J_0,g_0)$ to 
 $(J_1,g_1)$, respectively from $(J_1,g_1)$ to $(J_2,g_2)$, which are constant near $\pm\infty$. 
For $R>0$ sufficiently large we denote  
$$ 
H^{02,R}_s:=\left\{\begin{array}{ll} 
H^{01}_{s+R}, & s\le 0, \\ 
H^{12}_{s-R}, & s\ge 0. 
\end{array}\right. 
$$ 
We define the homotopies $J^{02,R}_s, g^{02,R}_s$ in a similar way. There exists a map $K:BC_*^{a,N}(H_0)\to BC_{*+1}^{a,N}(H_2)$ of order $1$ such that  
$$ 
\sigma_{H_2,H_1}^{H_s^{12}}\circ \sigma_{H_1,H_0}^{H_s^{01}} - \sigma_{H_2,H_0}^{H^{02,R}_s} = d\circ K+K\circ d. 
$$  
\end{proposition}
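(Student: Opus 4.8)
The plan is to imitate the homotopy-of-homotopies argument of Proposition~\ref{prop:chain-homotopy}, with the gluing length $R$ playing the role of the deformation parameter and the degenerate value $R=\infty$ allowed. I regard $\{H^{02,R}_s\}_{R\in[R_0,\infty]}$ as a family of increasing homotopies from $H_0$ to $H_2$: for $R<\infty$ it is the concatenation of the statement, and for $R=\infty$ it is the ``fully broken'' homotopy, for which I declare
$$
\cM^A(Q_\op,Q_\up;H^{02,\infty}_s):=\bigcup_{\substack{p_1\in\cP(H_1),\ Q_{p_1}\in\mathrm{Crit}(f_{p_1})\\ B+B'=A}}\cM^B(Q_\op,Q_{p_1};H^{01}_s)\times\cM^{B'}(Q_{p_1},Q_\up;H^{12}_s),
$$
suppressing the Morse and $(J,g)$ data from the notation. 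Comparing with Sections~\ref{sec:MBparam} and~\ref{sec:continuation}, the signed count of the zero-dimensional spaces $\cM^A(Q_\op,Q_\up;H^{02,\infty}_s)$ with $|Q_\op|-|Q_\up e^A|=0$ is exactly the matrix coefficient of $\sigma_{H_2,H_1}^{H_s^{12}}\circ\sigma_{H_1,H_0}^{H_s^{01}}$, while the analogous count for $H^{02,R_0}_s$ computes $\sigma_{H_2,H_0}^{H^{02,R_0}_s}$; so it suffices to prove the identity for $R=R_0$, and for larger $R$ one then combines it with Proposition~\ref{prop:chain-homotopy}.

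First I would run the compactness and gluing analysis for the total moduli space $\overline{\cM}^A:=\bigcup_{R\in[R_0,\infty]}\cM^A(Q_\op,Q_\up;H^{02,R}_s,\{f_p\},J^{02,R}_s,g^{02,R}_s)$. Since increasing homotopies preserve the action filtration (Lemma~\ref{lem:continuation}), one argues one action window at a time; each window contains finitely many orbits, with a uniform spectral gap and a uniform energy bound, so a single $R_0$ suffices. The uniform $C^0$-bound on the $\widehat W$-component is the maximum-principle bound used for $\partial^2=0$ in Section~\ref{sec:param}, the $S^{2N+1}$-component is confined by compactness of $S^{2N+1}$, and Floer--Gromov compactness for $s$-dependent equations controls the $\widehat W$-component. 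As $R\to\infty$ the long neck, on which $H^{02,R}_s\equiv H_1$, forces convergence to a configuration broken at an $S^1$-orbit $S_{p_1}$, $p_1\in\cP(H_1)$; conversely the ``stretching the neck'' gluing theorem, carried out in the Morse--Bott framework of~\cite{BOauto} now in a family over $R$, makes a neighbourhood of the $R=\infty$ stratum in $\overline{\cM}^A$ into a collar $\cM^A(Q_\op,Q_\up;H^{02,\infty}_s)\times(R_1,\infty]$. As in Section~\ref{sec:S1equivsymplhom} everything is done $S^1$-invariantly without quotienting by $S^1$, so this is ordinary Morse--Bott Floer gluing and no good/bad orbit dichotomy appears.

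I would then read the boundary of $\overline{\cM}^A$ in the case $|Q_\op|-|Q_\up e^A|=0$, where it is a compact $1$-manifold with boundary. Its boundary consists of: the slice $R=R_0$ (counted by $\sigma_{H_2,H_0}^{H^{02,R_0}_s}$); the slice $R=\infty$ (counted by $\sigma_{H_2,H_1}^{H_s^{12}}\circ\sigma_{H_1,H_0}^{H_s^{01}}$); configurations splitting off a rigid $H_0$-cascade at the incoming end, respectively a rigid $H_2$-cascade at the outgoing end, which by~\eqref{eq:dim-MB-s} and~\eqref{eq:dimMS} contribute $K\circ d$ and $d\circ K$, where $K:BC^{a,N}_*(H_0)\to BC^{a,N}_{*+1}(H_2)$ is defined by the signed count of $\overline{\cM}^A$ in the case $|Q_\op|-|Q_\up e^A|=-1$; and the two ``fake'' degeneracies (a gradient segment of some $f_p$ shrinking to zero length, or a Floer trajectory breaking at a non-critical point of some $f_p$), which cancel in pairs exactly as in the proofs of $d^2=0$ and of Proposition~\ref{prop:chain-homotopy}. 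Summing boundary points with the signs from the coherent orientations and the fibered-sum rule of~\cite[Section~4.4]{BOauto} gives $\sigma_{H_2,H_1}^{H_s^{12}}\circ\sigma_{H_1,H_0}^{H_s^{01}}-\sigma_{H_2,H_0}^{H^{02,R_0}_s}=d\circ K+K\circ d$.

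That $K$ has order $1$ follows from the dimension count already used in Lemma~\ref{lem:continuation} and Proposition~\ref{prop:chain-homotopy}: a configuration contributing to $K$ is a Floer cascade whose pieces are internal to $H_0$, $H_1$ or $H_2$ except for one $s$-dependent $H^{02,R}_s$-piece when $R<\infty$ (respectively one $H^{01}_s$-piece and one $H^{12}_s$-piece when $R=\infty$); inside $\overline{\cM}^A$ the $H^{02,R}_s$-piece varies in a one-parameter family over $R$ which carries a free $S^1$-action, hence has dimension $\ge 1$, so by~\eqref{eq:dim-MB-s} the corresponding index difference $-\mu(p_-)+\mu(p_+)+2\langle c_1(T\widehat W),A^0\rangle$ is $\ge -1$ (one more than for a plain continuation piece), while every other piece contributes $\ge 0$. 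Summing over the cascade yields $-\mu(\op)+\mu(\up)+2\langle c_1(T\widehat W),A\rangle\ge -1$, which by~\eqref{eq:filtration} means $K(F_\ell BC^{a,N}_*(H_0))\subset F_{\ell+1}BC^{a,N}_{*+1}(H_2)$. The main obstacle is the gluing analysis at $R=\infty$ -- establishing that $\overline{\cM}^A$ is a manifold with corners whose $R=\infty$ face is the broken moduli space above, with a gluing threshold uniform over each action window. The estimates involved (exponential decay at the Morse--Bott asymptotes, Newton-iteration gluing) are those of~\cite{BOauto}, now with an extra stretching parameter; the $S^1$-equivariance adds no real difficulty, and the remaining sign and filtration bookkeeping is routine.
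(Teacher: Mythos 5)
Your overall strategy (stretch the gluing parameter $R$ to $\infty$ and read off a chain homotopy from the boundary of the resulting $1$-dimensional family) is the first half of the paper's argument, and your compactness, orientation and order-$1$ filtration remarks for that family are fine. But there is a genuine gap at the $R=\infty$ end. You \emph{define} $\cM^A(Q_\op,Q_\up;H^{02,\infty}_s)$ as a union over critical points $Q_{p_1}\in\mathrm{Crit}(f^1_{p_1})$ of products $\cM^B(Q_\op,Q_{p_1};H^{01}_s)\times\cM^{B'}(Q_{p_1},Q_\up;H^{12}_s)$ -- which indeed computes the matrix coefficients of $\sigma_{H_2,H_1}^{H_s^{12}}\circ\sigma_{H_1,H_0}^{H_s^{01}}$ -- and then assert that the $R=\infty$ face of $\overline{\cM}^A$ is a collar over this space. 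That is not what the stretching limit produces. Because the intermediate asymptotics are Morse--Bott circles $S_{p_1}\subset\cP(H_1)$, the $R\to\infty$ degeneration yields configurations in which the $H^{01}_s$-piece and the $H^{12}_s$-piece are matched at an \emph{arbitrary common point} of $S_{p_1}$, i.e.\ a fibered product $\cM^{B_0}(Q_\op,S_{p_1};H^{01}_s)\ {}_{\uev}\!\times_{\oev}\ \cM^{B_2}(S_{p_1},Q_\up;H^{12}_s)$, with no gradient segment of $f^1_{p_1}$ and no incidence condition through $\mathrm{Crit}(f^1_{p_1})$. The count of these broken configurations is \emph{not} the matrix coefficient of the composition; the two differ in general, since the composition is defined through the auxiliary Morse functions on the intermediate orbits. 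So as written your argument proves $\bigl(\text{count of the }R=\infty\text{ face}\bigr)-\sigma_{H_2,H_0}^{H^{02,R_0}_s}=d\circ K_1+K_1\circ d$, and the identification of the first term with $\sigma_{H_2,H_1}^{H_s^{12}}\circ\sigma_{H_1,H_0}^{H_s^{01}}$ is missing.

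This is exactly the point where the paper introduces a second $1$-dimensional moduli space $\cM^A_2$: cascade configurations in which the $H^{01}_s$-piece and the $H^{12}_s$-piece are joined by gradient segments of $f^1_{p}$ of length in $\R^+$ (possibly interspersed with $H_1$-Floer cylinders). One boundary stratum of $\cM^A_2$ (all lengths zero, no intermediate $H_1$-cylinders) is precisely the $R=\infty$ face of your family, while the stratum where a gradient segment of $f^1_{p}$ breaks at a critical point gives exactly the composition $\sigma_{H_2,H_1}^{H_s^{12}}\circ\sigma_{H_1,H_0}^{H_s^{01}}$; the remaining strata produce a second order-$1$ chain homotopy $K_2$, and the Proposition holds with $K=K_1+K_2$. (In a nondegenerate Morse setting your shortcut would be legitimate, since breaking at an isolated orbit directly yields the composition; in the present $S^1$-invariant Morse--Bott setting it is not, and the interpolation by $\cM^A_2$ is an essential ingredient, not bookkeeping.) To repair your proof you must add this second interpolating family and the homotopy $K_2$, together with the corresponding free-$S^1$-action dimension argument showing $K_2$ is also of order $1$.
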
 
 
\begin{proof} Let $\{f^i_p\}$, $i=0,1,2$ be three generic collections of perfect Morse functions on $S_p$, for $p\in\cP(H_i)$ respectively. Given $\op\in\cP(H_0)$, $\up\in\cP(H_2)$, $Q_\op\in\mathrm{Crit}(f^0_\op)$, $Q_\up\in\mathrm{Crit}(f^2_\up)$, $A\in H_2(W;\Z)$ such that $|Q_\op|-|Q_\up e^A|=0$, we define for $R_0>0$ sufficiently large the family of moduli spaces 
$$ 
\cM^A_1:=\bigcup_{R\ge R_0} \cM^A(Q_\op,Q_\up;H^{02,R}_s, \{f^0_p,f^2_p\},J^{02,R}_s,g^{02,R}_s). 
$$ 
For a generic choice of the homotopies, this is a smooth $1$-dimensional manifold. Its boundary splits as 
$$ 
\p\cM^A_1=\p^{R_0}\cM^A_1 \cup \p^\infty\cM^A_1\cup \p^{int}\cM^A_1.  
$$ 
Here $\p^{R_0}\cM^A_1=\cM^A(Q_\op,Q_\up;H^{02,R_0}_s, \{f^0_p,f^2_p\},J^{02,R_0}_s,g^{02,R_0}_s)$. 
We now describe $\p^\infty\cM^A_1$, which corresponds to degenerations as $R\to\infty$. Let $p\in\cP(H_1)$, $m_0\ge 0$, $B\in H_2(W;\Z)$, and define  
$\cM^B_{m_0}(Q_\op,S_p;H^{01}_s, \{f^0_p\},J^{01}_s,g^{01}_s)$ as the union for  
$p^0_1,\dots,p^0_{m_0}\in\cP(H_0)$ and $A^0_1+\dots+A^0_{m_0}+A^{01}=B$ of the fibered products  
\begin{eqnarray*} 
&& 
W^u(Q_\op)  
\times_{\oev} 
(\cM^{A^0_1}(S_{\op}\,,S_{p^0_1};H_0,J_0,g_0)\!\times\!\R^+)\\ 
&& {_{\varphi_{f^0_{p^0_1}}\!\circ\uev}}\!\times   
_{\oev}\dots {_{\varphi_{f^0_{p^0_{m_0 -1}}}\!\circ\uev}}\!\times   
_{\oev} (\cM^{A^0_{m_0}}(S_{p^0_{m_0 -1}},S_{p^0_{m_0}};H_0,J_0,g_0)\!\times\!\R^+) \\ 
&& {_{\varphi_{f^0_{p^0_{m_0}}}\!\circ\uev}\times_{\oev}}  
\cM^{A^{01}}(S_{p^0_{m_0}},S_p;H^{01}_s,J^{01}_s,g^{01}_s). 
\end{eqnarray*} 
We define $\cM^B(Q_\op,S_p;H^{01}_s, \{f^0_p\},J^{01}_s,g^{01}_s)$ as the union over $m_0\ge 0$ of the moduli spaces $\cM^B_{m_0}(Q_\op,S_p;H^{01}_s, \{f^0_p\},J^{01}_s,g^{01}_s)$. This is a smooth manifold of dimension  
$$ 
\dim\, \cM^B(Q_\op,S_p;H^{01}_s, \{f^0_p\},J^{01}_s,g^{01}_s)=|Q_\op|-|pe^B|. 
$$ 
Given $p\in\cP(H_1)$, $m_2\ge 0$, $B\in H_2(W;\Z)$, we define  
the moduli space 
$$\cM^B_{m_2}(S_p,Q_\up;H^{12}_s, \{f^2_p\},J^{12}_s,g^{12}_s)$$ 
as the union for  
$p^2_1,\dots,p^2_{m_2}\in\cP(H_2)$ and $A^{12}+A^2_1+\dots+A^2_{m_2}=B$ of the fibered products  
\begin{eqnarray*} 
&& 
(\cM^{A^{12}}(S_p,S_{p^2_1};H^{12}_s,J^{12}_s,g^{12}_s)\!\times\!\R^+)\\ 
&& {_{\varphi_{f^2_{p^2_1}}\!\circ\uev}}\!\times   
_{\oev} (\cM^{A^2_1}(S_{p^2_1},S_{p^2_2};H_2,J_2,g_2)\!\times\!\R^+) \\ 
&&{_{\varphi_{f^2_{p^2_2}}\!\circ\uev}}\!\times   
_{\oev}\dots  
{_{\varphi_{f^2_{p^2_{m_2}}}\!\!\circ\uev}}\!\!\times 
_{\oev}   
\cM^{A^2_{m_2}}(S_{p^2_{m_2}},\!S_{\up})  
{_{\uev}\times} W^s(Q_\up). 
\end{eqnarray*} 
We define $\cM^B(S_p,Q_\up;H^{12}_s, \{f^2_p\},J^{12}_s,g^{12}_s)$ as the union over $m_2\ge 0$ of the moduli spaces $\cM^B_{m_2}(S_p,Q_\up;H^{12}_s, \{f^2_p\},J^{12}_s,g^{12}_s)$. This is a smooth manifold of dimension  
$$ 
\dim\, \cM^B(S_p,Q_\up;H^{12}_s, \{f^2_p\},J^{12}_s,g^{12}_s)=|p|-|Q_\up e^B|+1. 
$$ 
The boundary $\p^\infty\cM^A_1$ is then equal to 
$$ 
\bigcup_{\substack{p\in\cP(H_1)\\ B_0+B_2=A}}  
\hspace{-.4cm}\cM^{B_0}(Q_\op,S_p;H^{01}_s, \{f^0_p\},J^{01}_s,g^{01}_s)  
{_{\uev}\times_{\oev}} 
\cM^{B_2}(S_p,Q_\up;H^{12}_s, \{f^2_p\},J^{12}_s,g^{12}_s). 
$$ 
The boundary $\p^{int}\cM^A_1$ corresponds to degeneracies at a point $R\in]R_0,\infty[$, namely 
\begin{eqnarray*} 
\lefteqn{\p^{int}\cM^A_1}\\ 
 &\hspace{-.4cm} = & \hspace{-.4cm}\bigcup_{R> R_0} \hspace{-.1cm}\cM^{B_0}(Q_\op,Q_{p_0};H_0,\{f^0_p\},J_0,g_0) \hspace{-.1cm}\times\hspace{-.1cm} \cM^{B_2}(Q_{p_0},Q_\up;H^{02,R}_s, \{f^i_p\},J^{02,R}_s, g^{02,R}_s) \\ 
& \hspace{-.4cm}& \hspace{-.7cm}\cup\hspace{-.1cm} \bigcup_{R> R_0}\hspace{-.1cm}\cM^{B_0}(Q_\op,Q_{p_2};H^{02,R}_s, \{f^i_p\},J^{02,R}_s, g^{02,R}_s) \hspace{-.1cm}\times\hspace{-.1cm} \cM^{B_2}(Q_{p_2},Q_\up;H_2,\{f^2_p\},J_2,g_2). 
\end{eqnarray*} 
Here we used the shortcut notation $\{f^i_p\}=\{f^0_p,f^2_p\}$, and the union is taken over $B_0+B_2=A$, $p_i\in\cP(H_i)$, $Q_{p_i}\in\mathrm{Crit}(f^i_{p_i})$, $i=0,2$, such that $|Q_{p_0}|-|Q_\up e^{A-B}|=-1$ and $|Q_\op|-|Q_{p_2} e^{A-B}|=-1$. For a generic choice of the triple $(H^{02,R}_s,J^{02,R}_s,g^{02,R}_s)$, $R\ge R_0$, there are only a finite number of values of $R$ involved in the above union. The elements of $\p^{int}\cM^A_1$ correspond to the breaking of a gradient trajectory involved in one of the fiber products defining $\cM^A(Q_\op,Q_\up;H^{02,R}_s, \{f^0_p,f^2_p\},J^{02,R}_s, g^{02,R}_s)$, as $R$ converges to some $R_{int}\in]R_0,\infty[$. There are yet two other types of degeneracy in $\cM^A_1$, which compensate each other: the length of a gradient trajectory in a fibered product as above can shrink to zero, or a Floer trajectory can break at a point $Q\in S_p\setminus \mathrm{Crit}(f^i_p)$, for some $p\in\cP(H_i)$, $i=0,2$.  
We define a map $K_1:BC_*^{a,N}(H_0)\to BC_{*+1}^{a,N}(H_2)$ by 
$$ 
K_1(Q_\op)=\sum_{R>R_0} \ \sum_{|Q_\op|-|Q_\up e^A|=-1} \ \sum_{\u\in\cM^A(Q_\op,Q_\up;H^{02,R}_s, \{f^0_p, f^2_p\},J^{02,R}_s, g^{02,R}_s)} \eps(\u)Q_\up e^A. 
$$ 
The same argument as in the proof of Proposition~\ref{prop:chain-homotopy} shows that $K_1$ is of order $1$. 
The previous description of $\p\cM^A_1$ can be summarized by saying that $d\circ K_1+K_1\circ d+\sigma_{H_2,H_0}^{H^{02,R_0}_s}$  
is equal to the chain map obtained by the count of elements in $\p^\infty\cM^A_1$.  
 
We now exhibit another $1$-dimensional moduli space whose boundary contains $\p^\infty \cM^A_1$.  
Given $\op\in\cP(H_0)$, $\up\in\cP(H_2)$, 
$Q_\op\in\mathrm{Crit}(f^0_\op)$, $Q_\up\in\mathrm{Crit}(f^2_\up)$, $A\in H_2(W;\Z)$, and 
$m_0,m_1,m_2\ge 0$, we denote by   
$$ 
\cM^A_{m_0,m_1,m_2}(Q_\op,Q_\up;H^{ij}_s,\{f^k_p\},J^{ij}_s,g^{ij}_s) 
$$ 
the union for $p^0_1,\dots,p^0_{m_0}\in\cP(H_0)$, $p^1_1,\dots,p^1_{m_1+1}\in\cP(H_1)$,  
$p^2_1,\dots,p^2_{m_2}\in\cP(H_2)$, and $A^0_1+\dots+A^0_{m_0}+A^{01}+A^1_1+\dots+A^1_{m_1}+A^{12}+A^2_1+\dots+A^2_{m_2}=A$ of the fibered products  
\begin{eqnarray*} 
&& 
W^u(Q_\op)  
\times_{\oev} 
(\cM^{A^0_1}(S_{\op}\,,S_{p^0_1};H_0,J_0,g_0)\!\times\!\R^+)\\ 
&& {_{\varphi_{f^0_{p^0_1}}\!\circ\uev}}\!\times   
_{\oev}\dots {_{\varphi_{f^0_{p^0_{m_0 -1}}}\!\circ\uev}}\!\times   
_{\oev} (\cM^{A^0_{m_0}}(S_{p^0_{m_0 -1}},S_{p^0_{m_0}};H_0,J_0,g_0)\!\times\!\R^+) \\ 
&& {_{\varphi_{f^0_{p^0_{m_0}}}\!\circ\uev}\times_{\oev}}  
(\cM^{A^{01}}(S_{p^0_{m_0}},S_{p^1_1};H^{01}_s,J^{01}_s,g^{01}_s)\!\times\!\R^+)\\ 
&& {_{\varphi_{f^1_{p^1_1}}\!\circ\uev}}\!\times_{\oev} 
(\cM^{A^1_1}(S_{p^1_1}\,,S_{p^1_1};H_1,J_1,g_1)\!\times\!\R^+)\\ 
&& {_{\varphi_{f^1_{p^1_1}}\!\circ\uev}}\!\times  
_{\oev}\dots {_{\varphi_{f^1_{p^1_{m_1}}}\!\circ\uev}}\!\times   
_{\oev} (\cM^{A^1_{m_1}}(S_{p^1_{m_1}},S_{p^1_{m_1 +1}};H_1,J_1,g_1)\!\times\!\R^+) \\ 
&& {_{\varphi_{f^1_{p^1_{m_1 +1}}}\!\circ\uev}}\times_{\oev} 
(\cM^{A^{12}}(S_{p^1_{m_1 +1}},S_{p^2_1};H^{12}_s,J^{12}_s,g^{12}_s)\!\times\!\R^+)\\ 
&& {_{\varphi_{f^2_{p^2_1}}\!\circ\uev}}\!\times_{\oev}  
(\cM^{A^2_1}(S_{p^2_1},S_{p^2_2};H_2,J_2,g_2)\!\times\!\R^+) \\ 
&&{_{\varphi_{f^2_{p^2_2}}\!\circ\uev}}\!\times   
_{\oev}\dots  
{_{\varphi_{f^2_{p^2_{m_2}}}\!\!\circ\uev}}\!\!\times 
_{\oev}   
\cM^{A^2_{m_2}}(S_{p^2_{m_2}},\!S_{\up})  
{_{\uev}\times} W^s(Q_\up). 
\end{eqnarray*} 
In the above notation we abridged $H^{ij}_s=\{H^{01}_s,H^{12}_s\}$ (similarly for $J^{ij}_s,g^{ij}_s$) and $\{f^k_p\}=\{f^0_p,f^1_p,f^2_p\}$.  
We denote $\cM^A_2:=\cM^A(Q_\op,Q_\up;H^{ij}_s,\{f^k_p\},J^{ij}_s,g^{ij}_s)$ the union over $m_k\ge 0$, $k=0,1,2$ of the previously defined moduli spaces $\cM^A_{m_0,m_1,m_2}(Q_\op,Q_\up;H^{ij}_s,\{f^k_p\},J^{ij}_s,g^{ij}_s)$. This is a smooth manifold of dimension $|Q_\op|-|Q_\up e^A|+1$. In the case $|Q_\op|-|Q_\up e^A|=0$ that we are considering, $\cM^A_2$ is a smooth $1$-dimensional manifold whose boundary splits as 
$$ 
\p\cM^A_2=\p^0\cM^A_2\cup \p^{\infty,0}\cM^A_2\cup \p^{\infty,1}\cM^A_2\cup \p^{\infty,2}\cM^A_2. 
$$ 
Here $\p^0\cM^A_2$ corresponds to $m_0=0$ and the length of the gradient trajectory running between the endpoints of the two $s$-dependent Floer trajectories being equal to $0$. Thus $\p^0\cM^A_2=\p^\infty\cM^A_1$. The elements of $\p^{\infty,k}\cM^A_2$, $k=0,1,2$ correspond to the breaking of a gradient trajectory of $f^k_p$ appearing in the fibered product which defines $\cM^A_2$, for some $p\in\cP(H_k)$. Thus we have 
\begin{eqnarray*} 
\lefteqn{\p^{\infty,1}\cM^A_2}\\ 
&\hspace{-.5cm}=& \hspace{-.8cm}\bigcup_{\substack{p\in\cP(H_1) \\ Q_p\in\mathrm{Crit}(f^1_p) \\ B^{01}+B^{12}=A}} 
\hspace{-.5cm}\cM^{B^{01}}(Q_\op,Q_p;H^{01}_s,\{f^i_p\},J^{01}_s,g^{01}_s) \times \cM^{B^{12}}(Q_p,Q_\up;H^{12}_s,\{f^j_p\},J^{12}_s,g^{12}_s). 
\end{eqnarray*} 
Here we abridged $\{f^i_p\}=\{f^0_p,f^1_p\}$ and $\{f^j_p\}=\{f^1_p,f^2_p\}$. Similarly, we have  
\begin{eqnarray*} 
\lefteqn{\p^{\infty,0}\cM^A_2}\\ 
&\hspace{-.2cm}=& \hspace{-.8cm}\bigcup_{\substack{p\in\cP(H_0) \\ Q_p\in\mathrm{Crit}(f^0_p) \\ B^0+B^{02}=A}} 
\hspace{-.5cm}\cM^{B_0}(Q_\op,Q_p;H_0,\{f^0_p\},J_0,g_0) 
 \times \cM^{B^{02}}(Q_p,Q_\up;H^{ij}_s,\{f^k_p\},J^{ij}_s,g^{ij}_s) 
\end{eqnarray*} 
with $|Q_p|-|Q_\up e^{B^{02}}|=-1$, and 
\begin{eqnarray*} 
\lefteqn{\p^{\infty,1}\cM^A_2}\\ 
&\hspace{-.2cm}=& \hspace{-.8cm}\bigcup_{\substack{p\in\cP(H_2) \\ Q_p\in\mathrm{Crit}(f^2_p) \\ B^{02}+B^2=A}} 
\hspace{-.5cm}\cM^{B^{02}}(Q_\op,Q_p;H^{ij}_s,\{f^k_p\},J^{ij}_s,g^{ij}_s) \times \cM^{B^2}(Q_p,Q_\up;H_2,\{f^2_p\},J_2,g_2) 
\end{eqnarray*} 
with $|Q_\op|-|Q_p e^{B^{02}}|=-1$. We define $K_2:BC_*^{a,N}(H_0)\to BC_{*+1}^{a,N}(H_2)$ by 
$$ 
K_2(Q_\op)=\sum_{\substack{\up\in\cP(H_2) \\ B^{02}\in H_2(W;\Z) \\ |Q_\op|-|Q_\up e^{B^{02}}|=-1}} 
\  
\sum_{\u\in \cM^{B^{02}}(Q_\op,Q_\up;H^{ij}_s,\{f^k_p\},J^{ij}_s,g^{ij}_s)}  
\eps(\u) Q_\up e^{B^{02}}. 
$$ 
The same argument as in the proof of Proposition~\ref{prop:chain-homotopy} shows that $K_2$ is of order $1$.   
The previous description of $\p\cM^A_2$ shows that the chain map determined by the count of elements in $\p^0\cM^A_2$ is equal to $\sigma_{H_2,H_1}^{H_s^{12}}\circ \sigma_{H_1,H_0}^{H_s^{01}} - d\circ K_2 - K_2\circ d$. Since $\p^0\cM^A_2=\p^\infty\cM^A_1$, we obtain the conclusion of the Proposition by setting $K:=K_1+K_2$.  
\end{proof}

\begin{proof}[Proof of Proposition~\ref{prop:indepJg}] 
We consider a generic homotopy $(J^{12}_s,g^{12}_s)$, $s\in \R$ inside $\cJ^{S^1}_N$ from $(J_1,g_1)$ to $(J_2,g_2)$, which is constant near $\pm\infty$. Then $(J^{21}_s,g^{21}_s):=(J^{12}_{-s},g^{12}_{-s})$ is a homotopy from $(J_2,g_2)$ to $(J_1,g_1)$. These determine filtered chain maps $\sigma_{21}:BC_*^{a,N}(H,J_1,g_1)\to BC_*^{a,N}(H,J_2,g_2)$ and $\sigma_{12}:BC_*^{a,N}(H,J_2,g_2)\to BC_*^{a,N}(H,J_1,g_1)$.  By Proposition~\ref{prop:composition}, the composition $\sigma_{21}\circ \sigma_{12}$ is homotopic to  
the filtered chain map $\sigma_{22}:BC_*^{a,N}(H,J_2,g_2)\to BC_*^{a,N}(H,J_2,g_2)$ determined by the concatenation $(J^{21}_s\#_R J^{12}_s, g^{21}_s\#_R g^{12}_s)$ for $R>0$ large enough. The latter is homotopic to the identity by Proposition~\ref{prop:chain-homotopy}.  
 
Since all the homotopies involved are of order $1$, we obtain that $\sigma_{21}\circ \sigma_{12}$  induces on the first page $E^{a,N;1}_{*,*}(H,J_2,g_2)$ of the corresponding spectral sequence a chain morphism which is homotopic to the identity. The induced morphism on the second page $E^{a,N;2}_{*,*}(H,J_2,g_2)$ is therefore the identity. Similarly, $\sigma_{12}\circ \sigma_{21}$ induces the identity on the second page $E^{a,N;2}_{*,*}(H,J_1,g_1)$.  
 
Thus the induced morphism $\sigma_{21}:E^{a,N;2}_{*,*}(H,J_1,g_1)\to E^{a,N;1}_{*,*}(H,J_2,g_2)$ is an isomorphism. Since $\sigma_{21}$ preserves both the degree and the filtration, it follows that  
$\sigma_{21}(E^{a,N;2}_{*,1}(H,J_1,g_1))=E^{a,N;1}_{*,*}(H,J_2,g_2)$. Since $E^{a,N;2}_{*,1}(H,J_i,g_i)\simeq SH_{*+1}^{a,S^1,N}(H,J_i,g_i)$, $i=1,2$, we obtain the desired isomorphism. The fact that it does not depend on the choice of homotopy $(J^{12}_s,g^{12}_s)$ is a consequence of Proposition~\ref{prop:chain-homotopy}.  
\end{proof} 
 
\begin{remark} The isomorphism $$SH_*^{a,S^1,N}(H,J_1,g_1)\stackrel\simeq\to SH_*^{a,S^1,N}(H,J_2,g_2)$$ constructed in the proof of Proposition~\ref{prop:indepJg} is induced by the chain map  
$$SC_*^{a,S^1,N}(H,J_1,g_1)\to SC_*^{a,S^1,N}(H,J_2,g_2)$$ 
given by the count of the elements of the $0$-dimensional moduli spaces  
$$ 
\cM_{S^1}^A(S_\op,S_\up;H,J^{12}_s,g^{12}_s):=\cM^A(S_\op,S_\up;H,J^{12}_s,g^{12}_s)/S^1.  
$$ 
\end{remark}


\end{document}